\documentclass[11pt,reqno]{amsart}

%%%%%%%%%%%%%%%%%%%%
%% Packages
%%%%%%%%%%%%%%%%%%%%

\usepackage{amssymb,amsmath,amsthm,amsfonts}
\usepackage{setspace}
\usepackage{enumerate}
\usepackage{amsaddr}
\usepackage{latexsym}
\usepackage{color}
\usepackage{hyperref}
\hypersetup{
  colorlinks   = true,    % Colors links instead of boxes
  urlcolor     = blue,    % Color for external hyperlinks
  linkcolor    = blue,    % Color for internal links
  citecolor    = red      % Color for citations
}
\usepackage{graphicx}

%%%%%%%%%%%%%%%%%%%%
%% Evironments
%%%%%%%%%%%%%%%%%%%%

\newtheorem{thm}{Theorem}[section]
\newtheorem{prop}[thm]{Proposition}
\newtheorem{lem}[thm]{Lemma}
\newtheorem{cor}[thm]{Corollary}
\newtheorem{definition}[thm]{Definition}
\newtheorem{remark}[thm]{Remark}

%%%%%%%%%%%%%%%%%%%%
%% Macros
%%%%%%%%%%%%%%%%%%%%

\newcommand{\R}{\mathbb{R}}
\newcommand{\Q}{\mathbb{Q}}
\newcommand{\T}{\mathbb{T}}
\newcommand{\p}{\partial}
\newcommand{\lb}{\langle}
\newcommand{\rb}{\rangle}
\renewcommand{\d}{\operatorname{d}\!}

%%%%%%%%%%%%%%%%%%%%
%% Page Setup
%%%%%%%%%%%%%%%%%%%%

%\setlength\parindent{0pt}
\setlength{\textwidth}{6truein}
\setlength{\textheight}{8.2truein}
\setlength{\oddsidemargin}{10.0mm}
\setlength{\evensidemargin}{10.0mm}
\hyphenation{di-men-sion-al}
\doublespacing 
\linespread{1.5}
\allowdisplaybreaks

%%%%%%%%%%%%%%%%%%%%
%% FRONT MATTER
%%%%%%%%%%%%%%%%%%%%
\begin{document}

\title[Dynamics of the Majda-Biello System]{Smoothing and Global Attractors for the Majda-Biello System on the Torus}

\author{E.~Compaan}
\address{Department of Mathematics, University of Illinois, Urbana, IL}
\thanks{The author was supported by a National Physical Science Consortium fellowship. \\ \emph{Email address:} compaan2@illinois.edu}
\subjclass[2010]{35Q53, 35B41}
%\email{compaan2@illinois.edu}
%\urladdr{www.math.uiuc.edu/~compaan2} 

\begin{abstract}
In this paper, we consider the Majda-Biello system, a coupled KdV-type system, on the torus. In the first part of the paper, it is shown that, given initial data in a Sobolev space, the difference between the linear and the nonlinear evolution almost always resides in a smoother space. The smoothing index depends on number-theoretic properties of the coupling parameter in the system which control the behavior of the resonant sets. In the second part of the paper, we consider the forced and damped version of the system and obtain similar smoothing estimates. These estimates are used to show the existence of a global attractor in the energy space. We also show that when the damping is large in relation to the forcing terms, the attractor is trivial. 
\end{abstract}

\maketitle

%\tableofcontents

%%%%%%%%%%%%%%%%%%%%
%% INTRODUCTION
%%%%%%%%%%%%%%%%%%%%
\section{Introduction}

This paper studies the following system of coupled KdV-type equations on the torus
\begin{equation} \label{eq:MB}
\begin{cases}
u_t + u_{xxx} + \frac12 (v^2)_x = 0, \quad x \in \mathbb{T}\\
v_t + \alpha v_{xxx} + (uv)_x = 0, \\
u_0,v_0 \in H^s(\T).
\end{cases}
\end{equation}

This system was introduced  by Majda and Biello, \cite{MB}, \cite{Biello}, as a simplified asymptotic model for the behavior of certain atmospheric Rossby waves. Rossby waves are long atmospheric or oceanic waves which have important effects on weather patterns and ocean currents. The system \eqref{eq:MB} models such waves in the upper atmosphere.  In the model, $u$ corresponds to a Rossby wave with significant energy in the midlatitudes and $v$ corresponds to a Rossby wave confined to the equatorial region. The system is designed to capture the nonlinear interactions between the waves under specific physical conditions -- such interaction is relevant in both theoretical atmospheric science and weather prediction. Majda and Biello obtained numerical estimates of $0.899$, $0.960$, and $0.980$ for the coupling parameter $\alpha$ in the physical cases they considered. We note that in the case of atmospheric waves, the periodic problem is physically relevant. 

Solutions of the Majda-Biello system have momentum conservation. They also satisfy conservation laws at the $L^2$ and $H^1$ levels. Specifically, the following quantities are constant: 
\begin{equation} \label{eq:conservationlaws} \textstyle
E_1 =\int u \; \d x \qquad E_2 = \int v \; \d x \qquad E_3 = \int u^2 + v^2 \; \d x \qquad E_4 = \int u_x^2 + \alpha v_x^2 - uv^2 \; \d x. 
\end{equation}

The last integral above is the Hamiltonian conservation law. However, unlike the KdV, the system is not completely integrable, even in the relatively simple case $\alpha = 1$. It was recently shown by Vodov\'{a}-Jahnov\'{a} that there are no higher conservation laws \cite{Vod}. The system scales like the KdV, leading to a critical Sobolev index of $-\frac32$.

Coupled KdV-type systems have been extensively studied, see e.g. \cite{GST}, \cite{ACW}, \cite{LP}, \cite{ST}, \cite{AK}, but little of the work addresses periodic problems with coupling parameter $\alpha \neq 1$ such as appears in \eqref{eq:MB}.  For the Majda-Biello system on $\R$, and systems with similar coupling, more is known. For the related Gear-Grimshaw system \cite{GG}, a model of gravity waves in stratified fluids, Bona, Ponce, Saut, and Tom proved local well-posedness results in $H^s(\R) \times H^s(\R)$ for $s \geq \frac34$ \cite{BPST}. In \cite{Feng}, the same result for the Hirota-Satsuma system, another similar coupled KdV system, is proven. In \cite{Oh}, Oh proved global well-posedness for the Majda-Biello system on $\R$ with $s \geq 0$.  

The well-posedness of \eqref{eq:MB} on $\T$ was also studied in \cite{Oh}, and local well-posedness in $H^s$ for $s$ above a threshold $s^*$ established. The value of $s^*$ is dependent on the arithmetic properties of $\alpha$, leading to well-posedness results of markedly different types depending on the nature of $\alpha$. When $\alpha =  1$, the resonant interactions in the system simplify significantly. In this case, the methods used by Kenig, Ponce, and Vega in \cite{KPV} to prove the local well-posedness of the KdV equation can be applied; see \cite{Oh}. This gives local well-posedness in $H^{-\frac12} \times H^{-\frac12}$ for mean zero initial data. A further argument gives the result for general initial data \cite{Oh}. Oh also shows that for $\alpha < 0$ and $\alpha > 4$, the resonant interactions are easier to control and the KdV theory can be applied.

For $\alpha \in (0,1) \cup (1, 4] $, the behavior is more complex. Oh used the restricted norm method of Bourgain \cite{Bourg} to prove local well-posedness in $H^s \times H^s$ for $s \geq \min\big\{1, \frac12 + \frac 12 \max\{\nu_c, \nu_d\}+\big\}$ with the assumption that the initial data $u_0$ is mean zero. The values $\nu_c$ and $\nu_d$ are number-theoretic parameters which depend on the properties of $\alpha$; generically $\nu_c = \nu_d = 0$ for almost every $\alpha$. Introducing these parameters gives control over the resonant sets which arise in Bourgain space estimates. For any $\alpha$, local well-posedness extends to global for $s \geq 1$ due to conservation of the Hamiltonian $E_4$. This implies that the system is globally well-posed in $H^s$ for $s \geq 1$ regardless of the value of $\alpha$. In \cite{Oh2},  global well-posedness for $s > s^*(\alpha) \geq \frac57$ was established using the I-method. Here again, the threshold value depends on properties of $\alpha$. In the special case $\alpha = 1$, global well-posedness holds for $s> - \frac12$. 

This paper is concerned with the dynamics of solutions to the Majda-Biello system. In the first part, we demonstrate that the difference between the linear evolution and the nonlinear evolution resides in a higher-regularity space. The result follows from a combination of the method of normal forms of Babin, Ilyin, and Titi \cite{BIT} and the restricted norm method. This approach was first used by Erdo\u{g}an and Tzirakis in \cite{ET1} and \cite{ET2} on the KdV and the Zakharov system. The difficulty in applying their methods to this particular system comes from the complexity of the resonance relations. The coupling of the equations through $\alpha$ makes the resonances significantly more complex than those of the KdV and the Zakharov system. Unlike the KdV case, the resonance equations do not factor neatly, and the coupling interactions are considerably more difficult to control than those of the Zakharov system. 

The normal form transformation eliminates the derivative nonlinearity and replaces it with a third-order power nonlinearity. Controlling this requires trilinear $X^{s,b}$ estimates, in contrast to the bilinear estimates necessary for well-posedness. The local theory used multipliers of the form
\[\frac{k \lb k\rb ^s \lb k_1 \rb^{-s} \lb k_2\rb^{-s}}{\lb \tau - k^3 \rb^{1-b}\lb \tau_1 - \alpha k_1^3 \rb^{1/2} \lb \tau_2- \alpha k_2^3 \rb^{1/2}}, \]
whereas the smoothing results require control over multipliers such as
\begin{equation*}
\frac{k (k_1+ k_2)\lb k \rb^{s_1}\lb k_1 \rb^{-s} \lb k_2 \rb ^{-s} \lb k_3\ \rb^{-s}}{(k^3 - \alpha(k_1+ k_2)^3 - \alpha k_3^3 )\lb \tau - k^3 \rb^{1-b}\lb \tau_1 - k_1^3 \rb^{1/2} \lb \tau_2 - \alpha k_2^3 \rb^{1/2} \lb \tau_3 -  \alpha k_3^3 \rb ^{1/2}}.
\end{equation*}
For the latter, we want $s_1 > s$ to obtain smoothing. This means we have no \emph{a priori} bound on $\lb k \rb^{s_1}\lb k_1 \rb^{-s} \lb k_2 \rb ^{-s}\lb k-k_1-k_2 \rb^{-s}$. Furthermore, the differentiation by parts introduces the term $k^3 - \alpha(k_1+ k_2)^3 - \alpha(k-k_1-k_2)^3$ in the denominator. Unlike the bracketed terms which appear in the local theory multiplier, this can be arbitrarily small. The estimates require precise control of multiple terms to ensure that the multiplier remains bounded. Depending on the characteristics of $\alpha$, we obtain different levels of smoothing, with a gain of up to $\frac12$ for $\alpha \neq 1$. Again, the results improve if $\alpha = 1$; the KdV results in \cite{ET1} can be applied to get a gain of up to $1$ derivative.

In the second part of the paper, we consider the behavior of the system when forcing and weak damping terms are included:
\begin{equation} \label{eq:MBforced}
\begin{cases}
u_t + u_{xxx} + \gamma u + \frac12 (v^2)_x = f \\
v_t + \alpha v_{xxx} + \delta v + (uv)_x = g  
\end{cases}
\end{equation}
We take initial data $u_0,\; v_0 \in H^1$; the functions $f$ and $g$ are in $H^1$ with mean zero and the coefficients $\gamma$ and $\delta$ are positive. We investigate the long-time dynamics of this equation, and show that for almost every $\alpha$, all solutions will eventually enter a compact set, the global attractor, which is an invariant set of the evolution. The existence of such sets has been studied extensively, particularly for dissipative systems. For the KdV, global attractors were first studied by Ghidaglia in $H^2$ \cite{Ghid}. Further work by other authors has established the existence below the $L^2$ level; see the discussion and references in \cite{ET3}. To obtain an attractor for the Madja-Biello system, we use the method of \cite{ET3} and \cite{ET2} along with our smoothing estimate to decompose the solution into two parts: the linear part which decays over time thanks to the damping terms, and the nonlinear part. We then apply smoothing estimates to the nonlinear part to show that it resides in a smoother space. This gives a global attractor for almost every $\alpha \in (0,1)$. For $\alpha = 1$, the estimates in \cite{ET3} can be applied directly and one can obtain an $L^2$ attractor. 

One reason for the interest in global attractors is that they can be finite-dimensional even when the phase space of the equation is not, making them useful tools in understanding the dynamics of a system. In the last part of the paper, we show that the attractor for the Majda-Biello system is trivial, consisting of a single pair of functions $(p,q) \in H^5 \times H^5$, if the damping coefficients $\delta$ and $\gamma$ are sufficiently large in relation to the forcing terms. This is motivated by the corresponding result for the forced and damped KdV \cite{CR} and for the Zakharov system \cite{EMT}. We show that for any $\alpha$, as long as $\gamma$ and $\delta$ are sufficiently large in relation to $\|f\|_{H^1}$ and $\|g\|_{H^1}$, the time-independent version of \eqref{eq:MBforced} has a solution in $H^1$. For values of $\alpha$ at which the system exhibits smoothing, we show that the solutions to \eqref{eq:MBforced} converge to this stationary solution in $H^1$. The proof uses a modified version of $H^1$ conservation law to obtain control over the difference between a solution and the stationary evolution.  We also prove a similar result for the $L^2$ attractor in the case $\alpha = 1$. 

%%%%%%%%%%%%%%%%%%%%
%% NOTATION
%%%%%%%%%%%%%%%%%%%%
\subsection{Notation} \label{notation}

The Fourier sequence of a function $u \in L^2(\mathbb{T})$ is defined by 
\[ u_k = \frac1{2\pi} \int_0^{2\pi} u(x) e^{-ikx} \; \d x \quad \text{ for } k \in \mathbb{Z}. \]
We use Sobolev spaces $H^s(\T)$, with their norms given by 
\[ \| u \|_{H^s} = \| \lb k \rb^s u_k \|_{\ell^2_k}, \]
where $\lb k \rb = (1 + |k|^2)^{1/2}$. The notation $\dot{H}^s$ indicates the mean-zero counterpart of this space, i.e. $\dot{H}^s = \left\{ u \in H^s \; | \;\int u \; \d x = 0 \right\}$. 
The estimates use the Bourgain spaces corresponding to the $u$ and $v$ evolutions. These are defined as follows: 
\begin{align*}
\|u\|_{X^{s,b}_1} &= \| \lb k \rb ^s \lb \tau - k^3 \rb ^b u_k(\tau)\|_{L^2_\tau \ell^2_k} \\
\|v\|_{X^{s,b}_\alpha} &= \| \lb k \rb ^s \lb \tau -\alpha k^3 \rb ^b v_k(\tau)\|_{L^2_\tau \ell^2_k}.
\end{align*}
We also define restricted versions of the norms:
\begin{align*}
\|u\|_{X^{s,b}_{1,\delta}} &= \inf_{u = \tilde{u}, \; |t| \leq \delta} \| \tilde{u}\|_{X_1^{s,b}} \\
\|v\|_{X^{s,b}_{\alpha,\delta}} &= \inf_{v = \tilde{v}, \; |t| \leq\delta} \| \tilde{v}\|_{X^{s,b}_\alpha}.
\end{align*}

The expressions $e^{-t\p_x^3}u_0$ and $e^{-\alpha t \p_x^3}v_0$ denote the solutions to the linear problems 
\[ 
\left\{
\begin{array}{ll}
u_t + u_{xxx} = 0 \\
u(0) = u_0
\end{array}
\right.  
\qquad \text{ and } \qquad
\left\{
\begin{array}{ll}
v_t + \alpha v_{xxx} = 0 \\
v(0) = v_0
\end{array}
\right.  
\]
respectively. We write $U(t)$ for the semigroup operator corresponding to the Majda-Biello evolution. The phase space of this operator is $\dot{H}^s \times H^s$ for $\alpha \neq 1$; when $\alpha = 1$ we work with the phase space $\dot{L}^2 \times \dot{L}^2$. 

The notation $\sum^*$ indicates summation over all terms for which the denominator of the summand is nonzero. We write $a \lesssim b$ to indicate that there is an absolute constant $C$ such that $a \leq Cb$. The symbols $\gtrsim$ is used similarly. The expression $a \approx b$ means that $a \lesssim b$ and $a \gtrsim b$. The notation $a \simeq b$ is used to indicate that $| a- b| \leq \delta$ for some small $\delta$ determined by the context. We write $a - $ for $a - \epsilon$ when $\epsilon > 0$ is arbitrary; similarly we write $a +$ for $a + \epsilon$. To simplify calculations, we use the notation $\mathcal{O}(\epsilon)$ to denote a constant of the form $C\epsilon$, where $C$ may depend on $\alpha$, but not on any of the variables in the calculation.

%%%%%%%%%%%%%%%%%%%%
%% RESULTS
%%%%%%%%%%%%%%%%%%%%
\section{Statement of Results}

%%%%%%%%%%%%%%%%%%%
%% BACKGROUND
%%%%%%%%%%%%%%%%%%%
\subsection{Background}

To study well-posedness, Oh  in \cite{Oh} used the minimal type index $\nu_\rho$, a parameter which quantifies how ``close" the number $\rho$ is to being rational. Quantities of this type are heavily studied in the theory of diophantine approximations to irrational numbers. In our case, it is important in controlling the resonances which arise in estimates. 
\begin{definition}[\cite{Arn}, \cite{Oh}]
A number $\rho \in \R$ is said to be of type $\nu$ if there exists $K > 0$ such that for all $m,n \in \mathbb{Z}$, 
\[\left| \rho - \frac{m}{n} \right| \geq \frac{K}{|n|^{2 + \nu}}. \]
The minimal type index of a number $\rho$ is defined to be 
\[\nu_\rho = \begin{cases} \infty &\rho \in \Q \\ \inf\{\nu > 0 \; | \; \rho \text{ is of type } \nu \} &\rho \notin \Q. \end{cases}\]
\end{definition}
Dirichlet's approximation theorem implies that $\nu_\rho \geq 0$ for every real number $\rho$. Furthermore, it is known that $\nu_\rho = 0$ for almost every $\rho \in \R$ \cite{Arn}. In general, though, determining the minimal type index of a specific number is difficult. In fact, it is not even known whether there is any $\rho$ such that $0 < \nu_\rho < \infty$. However, for irrational algebraic numbers we have $\nu_\rho = 0$ due to the Thue-Siegel-Roth theorem \cite{Roth}. 

The local theory depends on the minimal type index of certain parameters  $c_1$, $c_2$, $d_1$, and $d_2$ which arise in the resonance equations. The $X^{s,b}$ estimates yield resonance equations of the form $k^3 - \alpha k_1^3 - \alpha(k-k_1)^3$ and $\alpha k^3 - k_1^3 - \alpha (k-k_1)^3$. The roots of the former equation are $k_1 = c_1k$, $k_1 = c_2 k $, and $k=0$, where 
\begin{align*} 
c_1 = \frac12 + \frac{\sqrt{-3 + 12/\alpha}}{6} \qquad c_2 = \frac12 -  \frac{\sqrt{-3 + 12/\alpha}}{6}.
\end{align*}
Note that these are the roots of the quadratic $3\alpha x^2 - 3 \alpha x + \alpha - 1$, so they are algebraic for rational $\alpha$. The solutions to the second resonance equation are $k_1 = d_1k$, $k_1 = d_2k$, and $k_1 = 0$, where
$d_1 = c_1^{-1}$ and $d_2 = c_2^{-1}$. 
These are the roots of the quadratic $(1-\alpha)x^2 + 3\alpha x - 3\alpha$. 

For $\alpha$ outside $[0,4]$, the roots are not real, meaning that the resonances don't cause trouble in the estimates. In this case, the local theory is like that of the KdV. The problem for $\alpha \in (1,4]$ can be treated in the same way as that for $\alpha \in (0,1)$. For simplicity, we state results for $\alpha \in (0,1)$. 

To give the local theory precisely, define
\[ \nu_c = \nu_{c_1} = \nu_{c_2} \qquad \text{ and } \qquad \nu_d = \max\{\nu_{d_1}, \nu_{d_2}\}. \]
\begin{thm}[\cite{Oh}]
Let $\alpha \in (0,1)$. For $s \geq \min\{1, \frac12 + \frac 12 \max\{\nu_c ,\nu_d \} +\} $, the Majda-Biello initial value problem is locally 
well-posed in $\dot{H}^s \times H^s$. In particular, for any $(u_0,v_0) \in \dot{H}^s \times H^s$, there exists $T  \gtrsim (\|u_0\|_{H^s} + \|v_0\|_{H^s})^{-3}$ such that there is a unique solution $(u,v)$ to \eqref{eq:MB} satisfying
\[(u,v) \in C([-T,T]; H^s_x(\mathbb{T})) \times C([-T,T]; H^s_x(\mathbb{T})) \]
and
\[ \| u \|_{X^{s,1/2}_{1,T}} + \|v\|_{X^{s,1/2}_{\alpha,T}} \lesssim \|u_0\|_{H^s} + \|v_0\|_{H^s}. \]
\end{thm}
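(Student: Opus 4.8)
The plan is to prove this by the restricted norm (Bourgain space) method: set up a contraction mapping in the spaces $X^{s,1/2}_{1,T}\times X^{s,1/2}_{\alpha,T}$ and reduce all the content to two bilinear estimates. First I would pass to the Duhamel formulation, writing the system as the fixed point
\[
u = e^{-t\p_x^3}u_0 - \tfrac12\int_0^t e^{-(t-t')\p_x^3}(v^2)_x\,\d t', \qquad
v = e^{-\alpha t\p_x^3}v_0 - \int_0^t e^{-\alpha(t-t')\p_x^3}(uv)_x\,\d t',
\]
and look for a solution in a small ball of $X^{s,1/2}_{1,T}\times X^{s,1/2}_{\alpha,T}$. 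The linear group term, the sharp time-cutoff estimate, and the retarded (Duhamel) estimate in $X^{s,b}$ spaces are generic to the method; the last of these also supplies a positive power $T^{\theta}$ when one lowers the second index from $1/2$ to $-1/2+$, and I would simply quote them. All the real content sits in the nonlinear estimates.

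Granting the standard linear machinery, the whole theorem reduces to the bilinear bounds
\begin{align*}
\|(v^2)_x\|_{X^{s,-1/2+}_1} &\lesssim \|v\|_{X^{s,1/2}_\alpha}^2, \\
\|(uv)_x\|_{X^{s,-1/2+}_\alpha} &\lesssim \|u\|_{X^{s,1/2}_1}\,\|v\|_{X^{s,1/2}_\alpha}.
\end{align*}
From these the solution map is a contraction on a small ball once $T^\theta$ is taken small, and balancing the $T^\theta$ gain against the cubic scaling fixes the existence time at $T\gtrsim(\|u_0\|_{H^s}+\|v_0\|_{H^s})^{-3}$; uniqueness comes from the contraction, and the time-continuity together with the claimed a priori bound follow from the embedding $X^{s,1/2+}_{1,T}\hookrightarrow C([-T,T];H^s)$ and a routine persistence argument. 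So I would devote the proof to these two estimates. By duality and Plancherel, each reduces to bounding a trilinear sum over the constraint set $k=k_1+k_2$, $\tau=\tau_1+\tau_2$, with a multiplier carrying the derivative factor $k$ in the numerator, the Sobolev weights $\lb k\rb^s\lb k_1\rb^{-s}\lb k_2\rb^{-s}$, and the modulation weights $\lb\tau-k^3\rb^{-1/2+}$, $\lb\tau_j-\alpha k_j^3\rb^{-1/2}$ (with $\tau_1-k_1^3$ in place of $\tau_1-\alpha k_1^3$ for the mixed term). After Cauchy--Schwarz in the dual variable this becomes a weighted counting bound, and the dispersive gain comes from the algebraic identity that on the constraint set the sum of the three modulations dominates the resonance function $k^3-\alpha k_1^3-\alpha k_2^3$ (first estimate) or $\alpha k^3-k_1^3-\alpha k_2^3$ (second estimate), so at least one modulation weight is $\gtrsim|\text{resonance}|^{1/2}$.

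The decisive and hardest step is bounding the resonance functions from below, and this is where the minimal type indices enter. Using the factorizations
\[
k^3-\alpha k_1^3-\alpha(k-k_1)^3 = -3\alpha\,k\,(k_1-c_1 k)(k_1-c_2 k),
\]
\[
\alpha k^3-k_1^3-\alpha(k-k_1)^3 = (\alpha-1)\,k_1\,(k_1-d_1 k)(k_1-d_2 k),
\]
the multiplier can only blow up when $k_1$ is near one of $c_j k$ (resp.\ $d_j k$). Since $\alpha\in(0,1)$ the two roots stay separated by a constant depending on $\alpha$, so near one root the other factor is $\approx|k|$, and the minimal type hypothesis gives $|k_1-c_j k|\gtrsim|k|^{-1-\nu_c}$ away from the nearest integer to $c_j k$, whence $|k^3-\alpha k_1^3-\alpha k_2^3|\gtrsim|k|^{\,1-\nu_c}$, and likewise with $\nu_d$ in the mixed case. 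Fed into the counting bound, the one-derivative loss is offset by a modulation gain of order $|k|^{(1-\nu_c)/2}$, and balancing this against the Sobolev weights in the worst frequency configuration produces exactly the threshold $s\geq\tfrac12+\tfrac12\max\{\nu_c,\nu_d\}+$. The genuinely delicate part — the \emph{main obstacle} — is the near-resonant sliver where $k_1$ lies within $|k|^{-1-\nu_c}$ of $c_j k$: there the diophantine bound is sharp, the few lattice points must be controlled individually, and it is precisely this region that forces the $\tfrac12\nu_c$ loss of regularity. Finally, for $s\geq1$ the derivative is absorbed by the Sobolev weights alone and no number-theoretic input is needed, which is why the threshold is capped by the outer $\min$ with $1$.
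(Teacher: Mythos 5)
Your proposal is correct in outline and follows essentially the same route as the actual argument: note that the paper does not prove this theorem at all---it is quoted from Oh's local theory \cite{Oh}---and Oh's proof is precisely the contraction in $X^{s,1/2}$ spaces that you describe, reduced to the two bilinear estimates, with the factorizations $k^3-\alpha k_1^3-\alpha k_2^3=-3\alpha k(k_1-c_1k)(k_1-c_2k)$ and $\alpha k^3-k_1^3-\alpha k_2^3=(\alpha-1)k_1(k_1-d_1k)(k_1-d_2k)$ and the minimal-type lower bound $|k_1-c_jk|\gtrsim |k|^{-1-\nu_c-}$ producing exactly the threshold $s\geq \tfrac12+\tfrac12\max\{\nu_c,\nu_d\}+$, as the background section of the paper also indicates. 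Two details a full write-up should make explicit: first, $k_1=0$ is an exact root of the mixed resonance (your blow-up analysis near $d_jk$ misses it, and the diophantine bound says nothing there), and it is precisely the mean-zero assumption on $u$---hence the phase space $\dot{H}^s\times H^s$---that removes this frequency; second, at the endpoint $b=\tfrac12$ the embedding $X^{s,1/2}\hookrightarrow C([-T,T];H^s_x)$ fails, so the contraction must be run with an auxiliary $Y^s$-type norm rather than the $X^{s,1/2+}$ embedding you invoke.
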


\subsection{Smoothing Estimate} 

The smoothing result for the nonlinear part of the Majda-Biello evolution is as follows. 
\begin{thm}\label{smoothingthrm}
Fix $\alpha \in (0,1)$ and $s > \frac12$. Consider the solution of \eqref{eq:MB} with initial data $(u_0, v_0) \in \dot{H}^s \times H^s$. 
Let
\[s^* < \min\bigg\{s - \nu_c, s - \nu_d, 2s -1 - \nu_c, 2s-1 - \nu_d \bigg\}. \]
If $\alpha = q^2/(3p(p-q) + q^2)$ for some $p, q \in \mathbb{Z}$ with $p > q$, we must instead take $s^* \leq \min\{ \frac12 - , s-1\}$. 
Then for $s_1 = s +  s^*$, we have 
\begin{align*}
u(t) - e^{-t\p^3_x}u_0 &\in C^0_tH^{s_1}_x \\
v(t) - e^{-\alpha t \p^3_x}v_0 &\in C^0_tH^{s_1}_x.
\end{align*}
In particular, for almost every $\alpha$, the above statements hold with $s_1 -  s< \min\{\frac12, s - \frac12\}$. 

If there is a growth bound 
\[ \|u(t)\|_{H^s} + \|v(t)\|_{H^s} \lesssim (1 +|t|)^{g(s)}, \]
then we also have
\[ \|u(T) - e^{-T \p_x^3}u_0\|_{H^{s_1}} + \|v(T) - e^{-\alpha T \p_x^3}v_0\|_{H^{s_1}} \leq C T^{1 + 6 g(s)}, \] 
where $C = C(s, s_1, \alpha, \|u_0\|_{H^s}, \|v_0\|_{H^s})$. 
\end{thm}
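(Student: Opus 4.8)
The plan is to run the Babin--Ilyin--Titi normal-form (differentiation-by-parts) method in the interaction representation and thereby reduce the smoothing claim to a bilinear boundary estimate together with a trilinear $X^{s,b}$ estimate. Setting $a_k(t) = e^{-ik^3t}u_k(t)$ and $b_k(t) = e^{-i\alpha k^3 t}v_k(t)$, the Duhamel formula for the $u$-difference reads
\[ a_k(t) - a_k(0) = -\tfrac{ik}{2}\int_0^t \sum_{k_1+k_2=k} e^{it'(\alpha k_1^3 + \alpha k_2^3 - k^3)}\, b_{k_1}(t')\,b_{k_2}(t')\,\d t'. \]
On the nonresonant set I integrate by parts in $t'$, dividing by the phase $\psi = k^3 - \alpha k_1^3 - \alpha k_2^3$; the resonant terms $\psi = 0$ excluded by $\sum^*$ are set aside for separate treatment. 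The boundary term is a bilinear expression carrying the factor $k/\psi$, while the remaining time integral, after inserting the $v$-equation for $\p_{t'}b_{k_1}$, yields exactly the trilinear term whose multiplier is displayed in the introduction, with $k = k_1 + k_2 + k_3$. The $v$-difference is treated symmetrically, its phase coming from the $d_i$-resonance.

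The first estimate to establish is the bilinear bound on the boundary term in $H^{s_1}$. Here I use the factorization $\psi = -3\alpha\, k\,(k_1 - c_1 k)(k_1 - c_2 k)$, so that the definition of the minimal type index gives $|k_1 - c_i k|\gtrsim \langle k\rangle^{-1-\nu_c-}$ and hence $|\psi|\gtrsim \langle k\rangle^{1-\nu_c-}$ in the near-resonant regime $k_1\approx c_i k$ (and $|\psi|\approx\langle k\rangle^3$ away from it). Since near resonance $\langle k_1\rangle,\langle k_2\rangle\approx\langle k\rangle$, feeding $|k/\psi|\lesssim\langle k\rangle^{\nu_c}$ into the $H^{s_1}$ norm forces $s_1 - 2s + \nu_c \le 0$, i.e. $s^*\le s-\nu_c$, while the low--high configurations give the constraint $2s-1-\nu_c$. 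For the degenerate parameters $\alpha = q^2/(3p(p-q)+q^2)$ the root $c_1 = p/q$ is rational, so $\psi$ vanishes on a nonempty lattice and $\nu_c=\infty$: there the divided terms are absent and one estimates the surviving resonant bilinear interaction directly, which only permits the reduced gain $s^*\le\min\{\tfrac12-,\,s-1\}$.

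The crux --- and the main obstacle --- is the trilinear estimate, in which the multiplier is controlled in $X^{s_1,b-1}$ so that, after Duhamel and the embedding $X^{s_1,b}\hookrightarrow C^0_tH^{s_1}_x$ for $b>\tfrac12$, it contributes to the claimed regularity. Two mechanisms must be balanced against each other. First, the normal-form denominator $\Phi = k^3 - \alpha(k_1+k_2)^3 - \alpha k_3^3$ factors as before and obeys $|\Phi|\gtrsim\langle k\rangle^{1-\nu_c-}$, taming the division performed by the normal form. Second, the algebraic identity
\[ (\tau - k^3) - (\tau_1 - k_1^3) - (\tau_2 - \alpha k_2^3) - (\tau_3 - \alpha k_3^3) = -(k^3 - k_1^3 - \alpha k_2^3 - \alpha k_3^3) \]
shows that the largest of the four modulations dominates a second resonance function which, after freezing one frequency, is governed by the type-$\nu_d$ condition on $d_1,d_2$. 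I would partition frequency space according to which $\langle k_i\rangle$ and which modulation is largest, apply Cauchy--Schwarz in the dual variables together with the elementary $\ell^2$/$L^2$ sum estimates, and in each region invoke one of these two bounds to absorb the weight $\langle k\rangle^{s_1}\langle k_1\rangle^{-s}\langle k_2\rangle^{-s}\langle k_3\rangle^{-s}$, for which $s_1>s$ leaves no a priori control. The delicate regions are the nearly doubly resonant ones, where $\Phi$ is as small as $\langle k\rangle^{1-\nu_c}$ while a modulation is simultaneously small; balancing these produces the remaining constraints $2s-1-\nu_d$ and the overall cap $s^*<\tfrac12$, and hence the almost-everywhere gain $\min\{\tfrac12,s-\tfrac12\}$. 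This case analysis, forced by resonances that --- unlike those of KdV --- do not factor into linear pieces, is where the work genuinely departs from \cite{ET1} and \cite{ET2}.

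Finally, for the quantitative statement I would iterate the local estimate. On an interval beginning at time $t$, the local well-posedness theorem provides a length $\tau\gtrsim(\|u(t)\|_{H^s}+\|v(t)\|_{H^s})^{-3}\gtrsim(1+t)^{-3g(s)}$, so covering $[0,T]$ requires $N\lesssim\int_0^T(1+t)^{3g(s)}\,\d t\lesssim T^{1+3g(s)}$ steps. Telescoping
\[ u(T) - e^{-T\p_x^3}u_0 = \sum_j e^{-(T-t_{j+1})\p_x^3}\big(u(t_{j+1}) - e^{-(t_{j+1}-t_j)\p_x^3}u(t_j)\big), \]
each summand is bounded in $H^{s_1}$ by the cube of the $H^s$ norms at time $t_j$, hence by $\lesssim(1+T)^{3g(s)}$. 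Multiplying the number of steps by the per-step bound gives $T^{1+3g(s)}\cdot T^{3g(s)} = T^{1+6g(s)}$, which is the asserted growth, with constant depending on $s$, $s_1$, $\alpha$, and the initial norms.
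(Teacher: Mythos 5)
Your proposal follows essentially the same route as the paper: differentiation by parts in the interaction representation, bilinear control of the boundary terms via the factorization $k^3-\alpha k_1^3-\alpha k_2^3=-3\alpha k(k_1-c_1k)(k_1-c_2k)$ and the minimal type index, trilinear $X^{s_1,b-1}$ estimates for the remaining Duhamel contributions combined with the embedding $X^{s_1,b}\hookrightarrow C^0_tH^{s_1}_x$, and the identical $\delta\sim T^{-3g(s)}$ telescoping that yields the $T^{1+6g(s)}$ bound. Two attributions are slightly off but harmless: in the paper the constraint $2s-1-\nu_c$ and the caps $\min\{\frac12,s-\frac12\}$ arise in the trilinear estimates (Propositions \ref{R1}--\ref{R3}), not from low--high configurations of the bilinear boundary term (whose nonresonant regime needs only $s_1-s\leq 1$, $s>\frac12$), and the second resonance function is tamed by the calculus lemma (Lemma \ref{sumest}) and explicit factorizations in each frequency region rather than directly by the type condition on $d_1,d_2$, which the paper reserves for the normal-form denominators.
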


\begin{remark}
When $\alpha$ is a rational number which cannot be written in the form $q^2/(3p(p-q) + q^2)$ for some integers $p > q$, the coefficients $c_i$ and $d_i$ are irrational algebraic numbers, implying that $\nu_c = \nu_d = 0$. In this case, the best possible smoothing given by Theorem \ref{smoothingthrm} is attained. In contrast, for rationals of the form $q^2/(3p(p-q) + q^2)$, the theorem gives no smoothing unless $s>1$. For examples of such rationals, notice that no rational of the form $\ell/3^k$, where $\ell$ is not divisible by $3$, can be written as $q^2/(3p(p-q) + q^2)$. Thus these rationals form a dense subset of $[0,1]$. The rationals which \emph{are} of the form $q^2/(3p(p-q) + q^2)$ are also dense. 
\end{remark}

\begin{remark}
For $\alpha = 1$, the smoothing results for the KdV contained in \cite{ET1} can be applied to the system directly as long as we take initial data in $\dot{H}^s \times \dot{H}^s$. This implies that for any $s > -\frac12$ the nonlinear part of the evolution is in $C^0_tH^{s_1}_x$ for $s_1 \leq \min\{3s, s+1\}$.  
\end{remark}

\begin{remark}
For $s\geq 1$, well-posedness holds for \emph{any} choice of $\alpha$. However, the smoothing in the theorem above is dependent on $\alpha$, even for large $s$. It can be shown that the methods used to prove this smoothing cannot be applied to get smoothing for $\alpha$ such that $\nu_c$ or $\nu_d$ is large and finite, regardless of the size of $s$. The problem of obtaining smoothing for all $\alpha$ when $s$ is large remains open. 
The difference between the LWP results and the smoothing arises since well-posedness is proved in $X^{s,b}$ spaces, requiring estimates of multipliers such as
\[ \frac{k \lb k \rb^s}{\lb  k^3 - \alpha k_1^3 - \alpha(k-k_1)^3 \rb^{1-b} \lb k_1 \rb^s \lb k-k_1 \rb^s }. \]
For sufficiently large $s$, the estimates can be completed without a contribution from the resonant term, i.e., one can estimate $\lb k^3 - \alpha k_1^3 - \alpha(k-k_1)^3 \rb \gtrsim 1$. However, the smoothing estimates are proved using differentiation by parts, which introduces multipliers of the form 
\[ \frac{k \lb k \rb^{s_1}}{(  k^3 - \alpha k_1^3 - \alpha(k-k_1)^3 ) ^{1-b} \lb k_1 \rb^s \lb k-k_1 \rb^s }. \]
In this case, the denominator can be arbitrarily small, and the estimates cannot be completed without controlling it in some way. 
\end{remark}

Smoothing estimates can be used to obtain rough bounds on higher-order Sobolev norms by an iterative argument. Such bounds are of particular interest since the system is not completely integrable and no high regularity conservation laws exist. 

\begin{cor}\label{polybounds}
For almost every $\alpha \in (0,1)$ and for any $s \geq 1$, the global solution of \eqref{eq:MB} with $\dot{H}^s \times H^s$ initial data satisfies the growth bound 
\[ \|u(t)\|_{H^s} + \|v(t)\|_{H^s} \leq C(1 + |t|)^{\tilde{C}}, \]
where $C = C( s, \alpha, \|u_0\|_{H^s}, \|v_0\|_{H^s})$ and $\tilde{C} = \tilde{C}(s)$. 
\end{cor}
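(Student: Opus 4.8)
The plan is to run a finite bootstrap in the regularity index, climbing from $\sigma = 1$ up to the target $s$ in steps of a fixed size and invoking the quantitative form of Theorem~\ref{smoothingthrm} at each rung. I would fix $\alpha$ in the full-measure set for which $\nu_c = \nu_d = 0$, so that Theorem~\ref{smoothingthrm} yields a smoothing gain of any amount strictly below $\min\{\tfrac12, \sigma - \tfrac12\}$ whenever it is applied at a regularity level $\sigma \geq 1$. Since $s \geq 1$ guarantees global well-posedness, the solution $(u,v)$ exists for all time and is continuous into $\dot H^s \times H^s$, so it is enough to establish the stated large-$|t|$ bound.

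First I would dispose of the base case $\sigma = 1$, which is the one place where conservation laws rather than smoothing do the work. The conserved quantities $E_3$ and $E_4$ from \eqref{eq:conservationlaws} give a time-uniform $H^1$ bound: $E_3$ controls $\|u(t)\|_{L^2}$ and $\|v(t)\|_{L^2}$, and writing $\|u_x\|_{L^2}^2 + \alpha\|v_x\|_{L^2}^2 = E_4 + \int u v^2$, one estimates the cubic term by $\|u\|_{L^2}\|v\|_{L^4}^2$ and, via Gagliardo--Nirenberg together with Young's inequality, absorbs a small multiple of $\|v_x\|_{L^2}^2$ into the left-hand side. This yields $\|u(t)\|_{H^1} + \|v(t)\|_{H^1} \leq C(\|u_0\|_{H^1}, \|v_0\|_{H^1}, \alpha)$, a growth bound with exponent $g(1) = 0$.

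For the inductive step, I would suppose a growth bound $\|u(t)\|_{H^{\sigma}} + \|v(t)\|_{H^\sigma} \lesssim (1+|t|)^{g(\sigma)}$ holds at some level $\sigma \in [1, s)$, and set $\sigma' = \sigma + s^*$ with $0 < s^* < \min\{\tfrac12, \sigma - \tfrac12\}$ chosen so that $\sigma' \leq s$. Because $\sigma' \le s$, the data satisfy $u_0, v_0 \in H^{\sigma'}$, and since the linear propagators are unitary, $\|e^{-T\p_x^3}u_0\|_{H^{\sigma'}} = \|u_0\|_{H^{\sigma'}} \le \|u_0\|_{H^s}$ is constant in time (and likewise for $v$). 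Decomposing
\[ \|u(T)\|_{H^{\sigma'}} \leq \|e^{-T\p_x^3}u_0\|_{H^{\sigma'}} + \|u(T) - e^{-T\p_x^3}u_0\|_{H^{\sigma'}} \]
and applying the quantitative estimate of Theorem~\ref{smoothingthrm} at regularity $\sigma$ (with gain $s^*$, so that $s_1 = \sigma'$) bounds the second term by $C T^{1 + 6 g(\sigma)}$, where $C$ depends only on $\sigma, \sigma', \alpha$ and $\|u_0\|_{H^\sigma}, \|v_0\|_{H^\sigma}$, both of which are $\le \|u_0\|_{H^s}, \|v_0\|_{H^s}$. Hence the growth exponent obeys the recursion $g(\sigma') \leq 1 + 6 g(\sigma)$. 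Choosing the fixed step size $s^* = \tfrac14$ (admissible since $\sigma \geq 1$ forces the maximal gain above $\tfrac14$), with a possibly shorter final step so that the top rung is exactly $s$, produces a ladder $1 = \sigma_0 < \cdots < \sigma_n = s$ with $n \lesssim s$. Iterating $g(\sigma_{j+1}) \leq 1 + 6 g(\sigma_j)$ from $g(\sigma_0) = 0$ gives $g(\sigma_n) \leq (6^n - 1)/5 =: \tilde C(s)$, which depends only on $s$, and collecting the finitely many multiplicative constants yields $\|u(t)\|_{H^s} + \|v(t)\|_{H^s} \leq C (1 + |t|)^{\tilde C}$ with $C = C(s, \alpha, \|u_0\|_{H^s}, \|v_0\|_{H^s})$.

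The step I expect to require the most care is not any single estimate but the bookkeeping across the ladder: I must verify that at each level $\sigma_j$ the constant produced by the quantitative smoothing estimate depends only on data norms controlled by $\|u_0\|_{H^s} + \|v_0\|_{H^s}$. This is exactly what the constraint $\sigma_j \le s$ secures, since it forces $\|u_0\|_{H^{\sigma_j}} \le \|u_0\|_{H^s}$ and likewise for $v_0$, so the dependence does not accumulate or degrade as one climbs. With that monotonicity in hand the argument is a routine finite induction; the substantive analytic inputs — the $H^1$ a priori bound and the quantitative smoothing estimate — are already available.
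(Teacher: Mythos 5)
Your proposal is correct and follows essentially the same route as the paper: a base case at $H^1$ from conservation of $E_3$ and the Hamiltonian $E_4$, followed by an induction that climbs in steps of size less than $\tfrac12$ using the quantitative bound of Theorem~\ref{smoothingthrm}, with the recursion $g(\sigma') \leq 1 + 6g(\sigma)$. Your version merely makes explicit some bookkeeping the paper leaves implicit (the monotonicity $\|u_0\|_{H^{\sigma_j}} \leq \|u_0\|_{H^s}$ and the closed form $\tilde C(s) = (6^n-1)/5$), which is consistent with and faithful to the paper's argument.
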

\begin{proof}
For $s = 1$, the solution is bounded in $H^1$ for all time by the conservation of the Hamiltonian. Take $\alpha$ such that $\nu_c = \nu_d  = 0$. Assume inductively that the statement of the corollary holds for some $s_0 \geq 1$. Then for $s \in (s_0, s_0 + \frac12)$ and initial data in $\dot{H}^s \times H^s$, solutions satisfy
\begin{align*}
\|u(t)\|_{H^s} + \|v(t)\|_{H^s} \leq \|u_0\|_{H^s} + \|u(t) - e^{-t \p_x^3}u_0\|_{H^{s}} + \|v_0\|_{H^s} + \|v(t) - e^{-\alpha t \p_x^3}v_0\|_{H^{s}} \\
 \leq C(\|u_0\|_{H^s}, \|v_0\|_{H^s}) + C(s, s_0, \alpha, \|u_0\|_{H^{s_0}}, \|v_0\|_{H^{s_0}})|t|^{1 + 6C(s_0)}.
\end{align*}
Repeating this argument, we can obtain the statement of the corollary for any $s \geq 1$. 
\end{proof}

Polynomial bounds for higher Sobolev norms of solutions to the KdV equation have been studied by Bourgain \cite{Bourg2} and Staffilani \cite{Staff}. Their methods use careful $X^{s,b}$ space estimates and are much more refined than the simple induction used to prove Corollary \ref{polybounds}. More recently, Kappeler, Schaad, and Topalov obtained uniform bounds for KdV solutions in all Sobolev spaces $H^s$ with $s \geq 0$ using perturbative expansions of the Fourier coefficients \cite{KST}. Their methods used Birkhoff normal forms, relying on the integrability of the KdV.  

%%%%%%%%%%%%%%%%%%%%
%% ATTRACTOR
%%%%%%%%%%%%%%%%%%%%

\subsection{Existence of a Global Attractor} 

We use smoothing estimates for the dissipative version of the Majda-Biello system to derive the existence of a global attractor. In the following, $U(t)$ will denote the evolution operator corresponding to \eqref{eq:MBforced}. Note that the notion of a global attractor is only reasonable when the system is globally well-posed. For the forced and weakly damped system, global well-posedness holds by the restricted norm argument of Bourgain using the estimates established in \cite{Oh}; see Section $2$ of \cite{ET3} for a similar argument. We begin with the definition of a global attractor. 

\begin{definition}[\cite{Tem}] \label{ga} A compact subset $\mathcal{A}$ of the phase space $H$ is called a global attractor for the semigroup $\{U(t)\}_{t \geq 0}$ if $\mathcal{A}$ is invariant under the flow of $U$ and 
\begin{align*}
\lim_{t \to \infty} d(U(t)u_0 ,\mathcal{A}) = 0 \text{  for every  } u_0 \in H.
\end{align*}
\end{definition}

Using energy estimates, we show that all solutions eventually enter a bounded subset of $\dot{H}^1 \times H^1$. Such a set is called an absorbing set for the evolution $U(t)$:

\begin{definition}[\cite{Tem}] \label{abset} A bounded subset $\mathcal{B}_0$ of $H$ is called absorbing if for any bounded $\mathcal{B} \subset H$, there exists a time $T = T(\mathcal{B})$ such that $U(t)\mathcal{B} \subset \mathcal{B}_0$ for all $t \geq T$. 
\end{definition}

Our global attractor will be the $\omega$-limit set of $\mathcal{B}_0$, which is defined by 
\[ \omega(\mathcal{B}_0)  = \bigcap_{s \geq 0} \overline{\bigcup_{t \geq  s} U(t)\mathcal{B}_0}. \]
Notice that it is immediate that the existence of a global attractor implies the existence of an absorbing set. The converse does not hold, though; an absorbing set may not be invariant under the flow and need not be compact. A partial converse is true, however, and will be used to show that the $\omega$-limit set is indeed a global attractor. 

\begin{thm}[\cite{Tem}] \label{asymthrm}
Let $H$ be a metric space and $U(t)$ be a continuous semigroup from $H$ to itself for all $t \geq 0$. Assume that there is an absorbing set $\mathcal{B}_0$. If the semigroup $\{U(t)\}_{t \geq 0}$ is asymptotically compact, i.e. for every bounded sequence $\{x_k\} \subset H$ and every sequence $t_k \to \infty$, the set $\{U(t_k)x_k\}_k$ is relatively compact in $H$, then $\omega (\mathcal{B}_0)$ is a global attractor. 
\end{thm}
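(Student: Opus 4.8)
The plan is to set $\mathcal{A} := \omega(\mathcal{B}_0)$ and verify the three requirements of Definition \ref{ga}: that $\mathcal{A}$ is compact, invariant under the flow, and attracts every point of $H$. The one tool underlying all three is the sequential description of the $\omega$-limit set, so the first step is to record it: $y \in \omega(\mathcal{B}_0)$ if and only if there exist $x_k \in \mathcal{B}_0$ and $t_k \to \infty$ with $U(t_k)x_k \to y$. This follows directly from the definition $\omega(\mathcal{B}_0) = \bigcap_{s\geq 0}\overline{\bigcup_{t\geq s}U(t)\mathcal{B}_0}$ by unwinding the closure and the intersection with a diagonal choice of indices. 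Combined with asymptotic compactness applied to a fixed point of $\mathcal{B}_0$ and any $t_k \to \infty$, this also shows $\mathcal{A} \neq \emptyset$.

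For compactness, since $H$ is metric it suffices to prove sequential compactness. Given $\{y_n\} \subset \mathcal{A}$, I would use the characterization together with a diagonal argument to select $x_n \in \mathcal{B}_0$ and $t_n \geq n$ with $d(U(t_n)x_n, y_n) < 1/n$. Asymptotic compactness then yields a subsequence along which $U(t_{n_j})x_{n_j} \to y$; since $d(U(t_{n_j})x_{n_j}, y_{n_j}) \to 0$, the matching subsequence of $\{y_n\}$ also converges to $y$, and $y \in \mathcal{A}$ again by the characterization. Hence $\mathcal{A}$ is compact.

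Attraction is where the absorbing hypothesis enters. Suppose, for contradiction, that $d(U(t_k)u_0, \mathcal{A}) \geq \epsilon$ for some $u_0 \in H$, some $\epsilon > 0$, and some $t_k \to \infty$. Since $\{u_0\}$ is bounded, the absorbing property supplies $T_0$ with $U(T_0)u_0 \in \mathcal{B}_0$; writing $U(t_k)u_0 = U(t_k - T_0)\bigl(U(T_0)u_0\bigr)$ via the semigroup law expresses each term (for large $k$) as $U(s_k)z$ with $z := U(T_0)u_0 \in \mathcal{B}_0$ and $s_k := t_k - T_0 \to \infty$. Asymptotic compactness extracts a convergent subsequence whose limit lies in $\mathcal{A}$ by the characterization, contradicting the lower bound. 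The same argument with a bounded sequence $x_k$ in place of $u_0$ upgrades this to attraction of arbitrary bounded sets.

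It remains to prove invariance $U(t)\mathcal{A} = \mathcal{A}$, and I expect the reverse inclusion to be the main obstacle. The forward inclusion $U(t)\mathcal{A} \subseteq \mathcal{A}$ is easy: if $y = \lim U(t_k)x_k \in \mathcal{A}$, then continuity of $U(t)$ gives $U(t)y = \lim U(t + t_k)x_k \in \mathcal{A}$. For $\mathcal{A} \subseteq U(t)\mathcal{A}$, I would fix $t$, take such a $y$, and consider the sequence $w_k := U(t_k - t)x_k$, defined once $t_k > t$; it has exactly the form required by asymptotic compactness, so a subsequence converges to some $w$, which lies in $\mathcal{A}$ by the characterization. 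Continuity then gives $U(t)w = \lim U(t_{k_j})x_{k_j} = y$, so $y \in U(t)\mathcal{A}$. The delicate point is precisely this construction of a preimage $w$: it is here, rather than in the forward direction, that asymptotic compactness is indispensable, since without it the candidate preimages $w_k$ need not accumulate anywhere.
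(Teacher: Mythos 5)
Your proposal is correct and complete: the sequential characterization of $\omega(\mathcal{B}_0)$, the diagonal argument for compactness, the absorbing-set-plus-asymptotic-compactness argument for attraction, and the use of asymptotic compactness to build preimages for the inclusion $\mathcal{A} \subseteq U(t)\mathcal{A}$ are all carried out soundly. The paper itself states Theorem \ref{asymthrm} without proof, citing \cite{Tem}, and your argument is essentially the classical one found in that reference, so there is nothing to compare beyond noting the agreement.
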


We will prove asymptotic compactness using a smoothing estimate for the dissipative system, yielding the following result. 

\begin{thm} \label{GAthrm}
For almost every $\alpha \in (0,1)$, the dissipative Majda-Biello system \eqref{eq:MBforced} has a global attractor in $\dot{H}^1 \times H^1$. Moreover, the global attractor is a compact subset of $H^{s} \times H^{s}$ for any $s < \frac 32$.   
\end{thm}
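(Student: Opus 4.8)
The plan is to verify the three hypotheses of the abstract attractor criterion, Theorem~\ref{asymthrm}, for the semigroup $U(t)$ associated to \eqref{eq:MBforced} acting on the phase space $H = \dot{H}^1 \times H^1$. First I would establish \emph{continuity} of the semigroup and \emph{global well-posedness}: the excerpt already notes that global well-posedness in $H^1 \times H^1$ follows from Bourgain's restricted-norm argument using the estimates of \cite{Oh}, together with the $H^1$ conservation law $E_4$ modified to account for the forcing and damping terms, so $U(t)$ is a well-defined continuous semigroup on $H$.

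Second, I would construct an \emph{absorbing set} in $\dot{H}^1 \times H^1$ via an energy estimate. The idea is to run the conserved Hamiltonian $E_4 = \int u_x^2 + \alpha v_x^2 - uv^2 \,\d x$ (and the $L^2$ quantity $E_3$) through the forced-damped dynamics. Differentiating these in time produces the damping contributions $-2\gamma\int u_x^2$ and $-2\delta\alpha\int v_x^2$ (with lower-order corrections from the cubic term and the $L^2$ damping), balanced against forcing terms bounded using $\|f\|_{H^1}, \|g\|_{H^1}$ and Cauchy--Schwarz. After controlling the sign-indefinite cubic term $\int uv^2$ by interpolation/Gagliardo--Nirenberg and Young's inequality, one obtains a differential inequality of the form $\frac{d}{dt}\Lambda \leq -c\,\Lambda + C$ for a suitable energy $\Lambda \approx \|u\|_{H^1}^2 + \|v\|_{H^1}^2$, and Gronwall gives a ball $\mathcal{B}_0 \subset \dot{H}^1 \times H^1$ that all bounded sets enter in finite time, i.e.\ an absorbing set in the sense of Definition~\ref{abset}.

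Third, and this is where the smoothing enters, I would prove \emph{asymptotic compactness}. The strategy, following \cite{ET3} and \cite{ET2}, is to split the evolution $U(t)(u_0,v_0)$ into a linear damped part and a nonlinear part: write $u(t) = e^{-\gamma t}e^{-t\p_x^3}u_0 + (\text{nonlinear remainder})$ and similarly for $v$ with damping $\delta$ and dispersion $\alpha$. The linear pieces carry the exponential decay factors $e^{-\gamma t}$ and $e^{-\delta t}$, so for a bounded sequence $\{(u_0^k,v_0^k)\}$ and $t_k \to \infty$ their contribution tends to zero in $H^1 \times H^1$. For the nonlinear remainder, the dissipative analogue of the smoothing estimate in Theorem~\ref{smoothingthrm} (established for \eqref{eq:MBforced}) places it in $H^{s} \times H^{s}$ for some $s > 1$ with a bound uniform over the absorbing set; since the embedding $H^{s} \times H^{s} \hookrightarrow H^1 \times H^1$ is compact on the torus, this yields relative compactness of $\{U(t_k)(u_0^k,v_0^k)\}$ in $H = \dot{H}^1 \times H^1$. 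Applying Theorem~\ref{asymthrm} then gives that $\omega(\mathcal{B}_0)$ is a global attractor, and the uniform $H^{s}$ bound on the nonlinear part (again using the exponential decay of the linear part) shows the attractor is in fact a compact subset of $H^{s} \times H^{s}$ for every $s < \tfrac32$, matching the gain of up to $\tfrac12$ derivative available for almost every $\alpha$.

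The main obstacle I anticipate is the asymptotic-compactness step, specifically obtaining the smoothing estimate \emph{uniformly on the absorbing set} for the forced-damped system rather than on a fixed local-in-time interval. The normal-form/differentiation-by-parts machinery behind Theorem~\ref{smoothingthrm} produces the delicate denominator $k^3 - \alpha(k_1+k_2)^3 - \alpha k_3^3$ that can be arbitrarily small, and one must verify that the damping terms $\gamma u$, $\delta v$ do not disrupt the resonance analysis while also exploiting their decay to bound the nonlinear remainder globally in time. Handling the sign-indefinite cubic $\int uv^2$ in the energy estimate so that it does not overwhelm the damping is a secondary technical point, but the trilinear estimate under exponential weights is the crux.
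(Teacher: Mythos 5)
Your proposal follows essentially the same route as the paper: an absorbing set from running $E_3$ and $E_4$ through the damped flow (Lemma \ref{H1bound}), asymptotic compactness from the dissipative smoothing estimate of Theorem \ref{MBforcedsmoothing} (whose resonant correction terms $\rho_1,\rho_2$ vanish for almost every $\alpha$) combined with the exponential decay of the linear part and Rellich's theorem, then Theorem \ref{asymthrm}, with compactness of the attractor in $H^s\times H^s$ for $s<\frac32$ obtained from boundedness in a slightly smoother space. You also correctly identify the main technical point — making the smoothing bound uniform in time — which the paper handles exactly as you anticipate, by summing local-theory bounds over $\delta$-length intervals and using the dissipation to keep the sum bounded (following Section 6 of \cite{ET2}).
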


\begin{remark}
For $\alpha = 1$ and forcing terms in $\dot{L}^2$, the arguments in \cite{ET3} immediately yield a global attractor in $\dot{L}^2 \times \dot{L}^2$ which is compact in $H^s \times H^s$ for any $s < 1$.
\end{remark}
When the damping terms are sufficiently large in relation to the forcing, the attractor is trivial:
\begin{thm}
Assume $\min\{\gamma,\delta\} \geq \frac{\sqrt{\alpha C^3}}{8}$, where $C$ the norm of the embedding $H^1 \hookrightarrow L^\infty$. If $\|f\|_{H^1}, \|g\|_{H^1}\ll \big(\min\{\gamma,\delta\}\big)^{4/3}$, the global attractor given by Theorem \ref{GAthrm} consists of a single pair of functions $(p,q) \in H^5(\T) \times H^5(\T)$.  
\end{thm}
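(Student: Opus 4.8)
The plan is to prove that the stationary problem has a solution and that every trajectory converges to it, so that the attractor degenerates to a single point. Consider the time-independent version of the forced-damped system \eqref{eq:MBforced}:
\begin{equation*}
\begin{cases}
p_{xxx} + \gamma p + \frac12 (q^2)_x = f, \\
\alpha q_{xxx} + \delta q + (pq)_x = g.
\end{cases}
\end{equation*}
First I would establish existence of a solution $(p,q)$ to this elliptic-type system in $\dot H^1 \times H^1$. The natural route is a fixed-point or Galerkin scheme: the linear operators $\p_x^3 + \gamma$ and $\alpha \p_x^3 + \delta$ are invertible on mean-zero data (their symbols $-ik^3 + \gamma$ and $-i\alpha k^3 + \delta$ never vanish for $\gamma,\delta>0$), so one can set up the map $(p,q) \mapsto \big((\p_x^3+\gamma)^{-1}(f - \frac12(q^2)_x),\ (\alpha\p_x^3 + \delta)^{-1}(g - (pq)_x)\big)$ and show it is a contraction on a suitable ball in $H^1 \times H^1$ once $\gamma,\delta$ are large relative to $\|f\|_{H^1},\|g\|_{H^1}$. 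The gain of one derivative from inverting the third-order operator, combined with the algebra property of $H^1$ on $\T$ and the hypothesis $\min\{\gamma,\delta\}\gtrsim\sqrt{\alpha C^3}$ with $\|f\|_{H^1},\|g\|_{H^1}\ll(\min\{\gamma,\delta\})^{4/3}$, should close the contraction. Bootstrapping the equation then upgrades the solution: since $p_{xxx} = f - \gamma p - \frac12(q^2)_x \in H^1$ (because $f\in H^1$ and $q^2\in H^1$), we gain three derivatives at a time, yielding $(p,q)\in H^5\times H^5$ as claimed after two bootstrap steps.

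Next I would show convergence of an arbitrary solution $(u,v)$ of \eqref{eq:MBforced} to the stationary pair $(p,q)$. The idea, following \cite{CR} and \cite{EMT}, is to track the difference $(w_1,w_2) = (u-p, v-q)$, which solves a system with the same damping but whose forcing has been cancelled. I would build a modified $H^1$-type energy adapted to the difference — essentially the quadratic part of the Hamiltonian $E_4$ evaluated on $(w_1,w_2)$, namely something of the form $\int (w_1)_x^2 + \alpha (w_2)_x^2 + (w_1)^2 + (w_2)^2$ plus lower-order coupling corrections — and differentiate it in time along the flow. The damping terms $\gamma,\delta$ produce a coercive negative contribution $-2\gamma\|w_1\|^2 - 2\delta\|w_2\|^2$ (and likewise on derivatives), while the nonlinear and cross terms, controlled via the smallness of the solutions inside the absorbing set and the $H^1\hookrightarrow L^\infty$ embedding with constant $C$, are dominated by this damping when $\gamma,\delta$ are large. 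This yields a differential inequality of the form $\frac{d}{dt}\mathcal E(t) \le -c\,\mathcal E(t)$ with $c>0$, and Gronwall then gives exponential decay $\mathcal E(t)\to 0$, hence $\|(u-p,v-q)\|_{H^1}\to 0$.

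To conclude that the attractor is the single point $\{(p,q)\}$, I would argue that any point $(a,b)$ on the global attractor $\mathcal A$ is, by invariance, of the form $U(t)(a_t,b_t)$ for preimages lying on $\mathcal A$ for all $t$; since $\mathcal A$ is bounded in $\dot H^1\times H^1$, the convergence estimate forces the backward orbit to sit within a shrinking neighborhood of $(p,q)$, so $(a,b)=(p,q)$. Equivalently, the exponential attraction to $(p,q)$ established above, valid for all initial data in the absorbing set, shows $\omega(\mathcal B_0)=\{(p,q)\}$ directly from the definition of the $\omega$-limit set. I would also note that $(p,q)$ is itself a fixed point of $U(t)$, so it lies in $\mathcal A$, giving equality.

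The main obstacle I expect is the energy estimate for the difference, specifically obtaining a \emph{closed} coercive differential inequality at the $H^1$ level rather than merely at $L^2$. The derivative nonlinearities $(w_2^2)_x$ and $(w_1 w_2 + \text{cross})_x$ generate, after differentiating the $H^1$ energy, terms with two derivatives hitting the $w$'s, and one must integrate by parts and exploit the precise algebraic structure of the Majda-Biello nonlinearity (as in the conservation law $E_4$) to avoid a net loss of derivatives. The quantitative thresholds in the hypothesis — the constant $\sqrt{\alpha C^3}/8$ and the exponent $4/3$ on $\min\{\gamma,\delta\}$ — will emerge precisely from balancing these nonlinear terms against the damping using $\|w\|_{L^\infty}\le C\|w\|_{H^1}$ together with the a priori bound on the size of solutions in the absorbing set, so the bookkeeping of these constants is where the real care is required. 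By contrast, the existence and bootstrap regularity of $(p,q)$ should be comparatively routine given the smallness assumptions.
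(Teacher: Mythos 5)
Your overall architecture matches the paper's: existence of a stationary solution $(p,q)$ by a contraction built from the inverses of $\gamma - ik^3$ and $\gamma - i\alpha k^3$ (the paper reduces to a single fixed-point map for $q$ in $H^2$ on a ball of radius $\sim \alpha^{1/6}\gamma^{2/3}$, which is where the thresholds $\|f\|_{H^1} \ll \alpha^{1/3}\gamma^{4/3}$, $\|g\|_{H^1} \ll \alpha^{1/2}\gamma^{4/3}$ arise — essentially your scheme), followed by convergence of $(y,z) = (u-p, v-q)$ to zero. One small caution on your bootstrap: with $f,g$ only in $H^1$, each application of $(\p_x^3+\gamma)^{-1}$ is capped by the regularity of the forcing, so iterating gives $p,q \in H^4$ and then stalls; your claim of reaching $H^5$ "after two bootstrap steps" does not actually close with $H^1$ forcing.

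The genuine gap is in the convergence step. You propose a \emph{quadratic} $H^1$-type energy ("the quadratic part of $E_4$ plus lower-order coupling corrections") satisfying a closed coercive inequality $\frac{d}{dt}\mathcal{E} \le -c\,\mathcal{E}$. This cannot work as stated: differentiating $\int y_x^2 + \alpha z_x^2$ along the flow of the difference system produces trilinear terms such as $\int y_{xx}\, z z_x$ and $\alpha \int z_{xx}(yz)_x$, which after any integration by parts still carry three derivatives distributed over three factors (e.g. $\int y_x z_x^2$) and are \emph{not} bounded by $H^1$ norms — no quadratic correction term, with or without coefficients $p,q$, cancels them. The cancellation requires the \emph{cubic} term $-\int yz^2$ (the cubic part of $E_4$ transplanted to the difference) together with the corrections $-2\int qyz - \int pz^2$; with these, the paper's modified Hamiltonian $H_4 = \int y_x^2 + \alpha z_x^2 - yz^2 - 2qyz - pz^2\,\d x$ satisfies the exact identity $\p_t H_4 = -2\gamma H_4 + \gamma \int yz^2\,\d x$. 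But then $H_4$ is neither coercive nor is the identity closed, so a single Gronwall argument is unavailable. The paper instead runs a two-tier cascade: first it proves $L^2$ exponential decay from $E_3 = \int y^2 + z^2$, via $\p_t E_3 \le \big(-2\gamma + C\|p\|_{H^2} + C\|q\|_{H^2}\big) E_3$ — this is precisely where the hypotheses $\gamma \gtrsim \sqrt{\alpha C^3}$ and $\|f\|_{H^1},\|g\|_{H^1} \ll \gamma^{4/3}$ are consumed, through the radius of the contraction ball for $q$; then it uses the $L^2$ decay plus the a priori $H^1$ bound to show the error $\gamma\int yz^2 \lesssim e^{-at}$ and that the non-quadratic part of $H_4$ tends to zero, whence $\|y_x\|_{L^2}^2 + \alpha\|z_x\|_{L^2}^2 \to 0$ and $H^1$ convergence follows. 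You correctly identified this derivative-loss issue as the crux, but the mechanism you proposed to resolve it (quadratic energy, direct coercive Gronwall) fails exactly there; the missing ideas are the cubic modification of the energy and the preliminary $L^2$ decay that controls the resulting non-coercive error terms.
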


\begin{remark}
When $\alpha=1$ and the forcing terms $f$ and $g$ are in $\dot{L}^2$, the statement of the theorem holds if $\|f\|_{L^2}, \|g\|_{L^2} \ll \min\{\gamma,\delta\}$. 
\end{remark}

This theorem is proved using a modification of the Hamiltonian conservation law to show that a solution to \eqref{eq:MBforced} converges to the solution of the corresponding time-independent system as $t \to \infty$. 

%%%%%%%%%%%%%%%%%%%%
%% SMOOTHING PROOF
%%%%%%%%%%%%%%%%%%%%
\section{Proof of Theorem \ref{smoothingthrm}}

To prove the smoothing estimate, we begin by establishing an equivalent formulation of \eqref{eq:MB} via differentiation by parts. This formulation decomposes the equation into several terms which will be estimated separately. 

\begin{prop} \label{dbp} Assume $u_0 \in \dot{H}^s$. The system \eqref{eq:MB} can be written in the following form:
\[
\begin{cases}
\p_t\big[e^{-ik^3t}(u_k + B_1(v,v)_k)\big] &= e^{-ik^3t}\big[\rho_1(v,v)_k + R_1(u,v,v)_k\big] \\
\p_t\big[e^{-i\alpha k^3 t}(v_k + B_2(u,v)_k)\big] &= e^{-i\alpha k^3 t}\big[\rho_2(u,v)_k + R_2(v,v,v)_k + R_3(u,u,v)_k\big], 
\end{cases}
\]
where
\begin{align*}
B_1(u,v)_k = -\frac{k}{2} &\sum_{k_1+k_2 = k}^* \frac{u_{k_1}v_{k_2}}{k^3-\alpha k_1^3 - \alpha k_2^3}\\
B_2(u,v)_k = -k &\sum_{k_1 + k_2 = k}^* \frac{u_{k_1}v_{k_2}}{\alpha k^3 - k_1^3 - \alpha k_2^3} \\
%\end{align*}
%\begin{align*}
R_1(u,v,w)_k = - \frac{i}{3\alpha} &\sum_{k_1 + k_2 + k_3 = k}^* \frac{(k_1 + k_2)u_{k_1}v_{k_2}w_{k_3}}{(k_1 + k_2 - c_1k)(k_1+k_2 - c_2 k )} \\
R_2(u,v,w)_k = \frac{ik}{2} &\sum_{k_1 + k_2 + k_3 = k}^* \frac{(k_1 + k_2) u_{k_1}v_{k_2}w_{k_3}}{\alpha k^3 - (k_1 + k_2)^3 - \alpha k_3^3} \\
R_3(u,v,w)_k = ik &\sum_{k_1 + k_2 + k_3 = k}^* \frac{(k_2 + k_3)u_{k_1}v_{k_2}w_{k_3}}{\alpha k ^3 - k_1^3 - \alpha (k_2+k_3)^3} \\
\rho_1(u,v)_k = -ik&\big(u_{c_1 k }v_{c_2 k}\big) \\
\rho_2(u,v)_k = -ik&\big(u_{d_1k}v_{(1-d_1)k} + u_{d_2 k } v_{(1-d_2)k}\big).
\end{align*}
\end{prop}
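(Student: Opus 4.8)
The plan is to work on the Fourier side and pass to the interaction representation, after which every term in the proposition falls out of a single differentiation by parts in time. First I would record the Fourier-coefficient form of \eqref{eq:MB}: since the $k$-th coefficient of $\p_x$ is $ik$, the equations become $\p_t u_k = ik^3 u_k - \tfrac{ik}{2}\sum_{k_1+k_2=k} v_{k_1}v_{k_2}$ and $\p_t v_k = i\alpha k^3 v_k - ik\sum_{k_1+k_2=k}u_{k_1}v_{k_2}$. Writing $w_k = e^{-ik^3 t}u_k$ and $z_k = e^{-i\alpha k^3 t}v_k$ and multiplying by the linear phases removes the linear terms, giving
\[ \p_t\big(e^{-ik^3 t}u_k\big) = -\tfrac{ik}{2}\sum_{k_1+k_2=k}e^{-i\phi t}\,z_{k_1}z_{k_2}, \qquad \p_t\big(e^{-i\alpha k^3 t}v_k\big) = -ik\sum_{k_1+k_2=k}e^{-i\psi t}\,w_{k_1}z_{k_2}, \]
where the resonance functions are $\phi = k^3 - \alpha k_1^3 - \alpha k_2^3$ and $\psi = \alpha k^3 - k_1^3 - \alpha k_2^3$ with $k_1+k_2=k$.

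Next I would split each sum into its resonant part (where the resonance function vanishes) and the non-resonant part $\sum^*$. On the non-resonant part I would use $e^{-i\phi t} = \p_t(e^{-i\phi t})/(-i\phi)$ and the product rule, $\tfrac{\p_t(e^{-i\phi t})}{\phi}z_{k_1}z_{k_2} = \p_t\big(\tfrac{e^{-i\phi t}}{\phi}z_{k_1}z_{k_2}\big) - \tfrac{e^{-i\phi t}}{\phi}\p_t(z_{k_1}z_{k_2})$. Reverting the phases in the total-derivative term gives $e^{-ik^3 t}\tfrac{v_{k_1}v_{k_2}}{\phi}$, whose weighted sum is exactly $-B_1(v,v)_k$; moving it to the left-hand side forms the combination $u_k + B_1(v,v)_k$. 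The identical maneuver on the $v$-equation produces $v_k + B_2(u,v)_k$, where the $k_1=0$ and $k=0$ roots cause no trouble (the prefactor $k$ kills $k=0$).

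To identify the leftover term $-\tfrac{e^{-i\phi t}}{\phi}\p_t(z_{k_1}z_{k_2})$ as $R_1$ (resp. as $R_2,R_3$ in the second equation), I would substitute the evolution equations back into these time derivatives. In the first equation $1/\phi$ is symmetric in $(k_1,k_2)$, so the two product-rule terms coincide and yield a factor $2$; substituting $\p_t z_{k_1} = -ik_1 e^{-i\alpha k_1^3 t}\sum u\,v$ and relabelling gives a $u v v$ term. The key algebraic input is that the cubic terms in $k_1$ cancel in $\phi$, leaving $\phi = -3\alpha\,k\,(k_1 - c_1 k)(k_1 - c_2 k)$; this converts $k/\phi$ into $-1/[3\alpha(k_1+k_2-c_1 k)(k_1+k_2-c_2 k)]$ and reproduces $R_1$, constant included. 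In the second equation $w$ and $z$ are distinct, so the two product-rule terms are genuinely different: differentiating $w_{k_1}$ brings in the $u$-nonlinearity $(v^2)_x$ and produces the $v v v$ term $R_2$, while differentiating $z_{k_2}$ brings in $(uv)_x$ and produces the $u u v$ term $R_3$; the two denominators $\alpha k^3 - (k_1+k_2)^3 - \alpha k_3^3$ and $\alpha k^3 - k_1^3 - \alpha(k_2+k_3)^3$ arise according to which inner frequency is the split one. On the resonant sets the phases equal $1$, and using $c_1+c_2=1$ together with $\alpha(c_1k)^3+\alpha(c_2k)^3 = k^3$ (and the analogue for $d_1,d_2$) one recovers $z_{c_1 k}z_{c_2 k} = e^{-ik^3 t}v_{c_1 k}v_{c_2 k}$, giving $\rho_1$ and $\rho_2$; here I would invoke the hypothesis $u_0\in\dot H^s$ to discard the $k_1=0$ root of $\psi$, since that contribution carries the conserved mean $u_0=0$.

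The computation is essentially bookkeeping, so I expect the main obstacle to be purely algebraic rather than conceptual. The crux is establishing the factorizations $\phi = -3\alpha\,k\,(k_1-c_1 k)(k_1-c_2 k)$ and $\psi = (\alpha-1)\,k_1\,(k_1-d_1 k)(k_1-d_2 k)$ — in particular the cancellation of the leading $k_1^3$ term, which is exactly what makes the resonant sets one-dimensional — and then matching the numerical constants and denominators after the phase cancellations $e^{-i\phi t}e^{-i\alpha k_1^3 t}e^{-i\alpha k_2^3 t} = e^{-ik^3 t}$ and its counterpart for $\psi$. Care is needed to keep the symmetrization (the factor $2$ in the $u$-equation) cleanly separated from the non-symmetric splitting (two distinct terms in the $v$-equation), and to confirm that the $k=0$ and $k_1=0$ resonances are harmless.
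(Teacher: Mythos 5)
Your proposal is correct and follows essentially the same route as the paper's proof: pass to the interaction variables $m_k = e^{-ik^3t}u_k$, $n_k = e^{-i\alpha k^3t}v_k$, differentiate by parts in time on the non-resonant sums to produce $B_1$, $B_2$ and the resonant terms $\rho_1$, $\rho_2$, then substitute the evolution equations into $\partial_t(n_{k_1}n_{k_2})$ and $\partial_t(m_{k_1}n_{k_2})$ to obtain $R_1$, $R_2$, $R_3$, using the factorizations $k^3-\alpha k_1^3-\alpha k_2^3 = -3\alpha k(k_1-c_1k)(k_1-c_2k)$ and $\alpha k^3-k_1^3-\alpha k_2^3=(\alpha-1)k_1(k_1-d_1k)(k_1-d_2k)$ together with the mean-zero assumption on $u$ to handle the $k=0$ and $k_1=0$ roots. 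The details you flag -- the symmetrization factor of $2$ in the $u$-equation versus the two distinct product-rule terms in the $v$-equation, the phase cancellations on the resonant sets, and the matching of constants -- all check out against the paper's computation.
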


\begin{proof} 
Taking the spatial Fourier transform of \eqref{eq:MB} yields
\[
\begin{cases}
\displaystyle{\partial_t u_k - i k^3 u_k + \frac{ik}2 \sum_{k_1 + k_2 = k} v_{k_1}v_{k_2} = 0} \\
\displaystyle{\partial_t v_k - i \alpha k^3 v_k + ik \sum_{k_1 + k_2 = k} u_{k_1}v_{k_2} = 0}.
\end{cases} 
\]

Change variables by setting $m_k(t)= e^{-ik^3t}u_k(t)$ and $n_k(t) = e^{-i\alpha k^3}v_k(t)$. Then the system becomes
\[
\begin{cases}
\displaystyle{ \partial_t m_k = \frac{-ik}{2} \sum_{k_1 + k_2 = k} e^{-it(k^3-\alpha k_1^3 -
\alpha k_2^3)}n_{k_1}n_{k_2} }\\
\displaystyle{\partial_t n_k = -ik \sum_{k_1 + k_2 = k}  e^{-it(\alpha k^3 - k_1^3 - \alpha
k_2^3)}m_{k_1}n_{k_2}}.
\end{cases}
\]

Differentiate the equation for $\p_t m_k$ by parts to obtain
\begin{align*}  
\partial_t m_k = \frac{k}{2} &\sum_{k_1 + k_2 = k}^* \frac{\partial_t(e^{-it(k^3-\alpha k_1^3 - \alpha
k_2^3)}n_{k_1}n_{k_2})}{k^3-\alpha k_1^3 - \alpha k_2^3} \\ 
- \frac{k}{2} &\sum_{k_1+ k_2 = k} ^* \frac{e^{-it(k^3 - \alpha k_1^3 - \alpha
k_2^3)}\p_t(n_{k_1}n_{k_2})}{k^3 - \alpha k_1^3 - \alpha k_2^3}  
- ikn_{c_1 k}n_{c_2 k}.
\end{align*}
Recall that the constants $c_1$ and $c_2$ arise from the solving $k^3 - \alpha k_1^3 - \alpha k_2^3 = 0$. The $k = 0$ solution does not appear in the resonant term since we assume that $u_0$, and hence $u$, is mean zero. Furthermore, the resonant term only appears when $c_1$ and  $c_2$ are rational and $c_1 k$ is an integer. In particular, $c_1, c_2 \in \Q$ only if $\alpha = q^2/(3p(p-q) + q^2)$ for some $p, q \in \mathbb{Z}$ with $p > q$.

Using the differential equation, we find that the second sum in $\p_tm_k$  is 
\begin{align*}
ik &\sum_{k_1+k_2+k_3 = k}^* (k_1 + k_2)\frac{e^{-it(k^3-k_1^3-\alpha k_2^3
-\alpha k_3^3)} m_{k_1}n_{k_2}n_{k_3}}{k^3-\alpha(k_1+k_2)^3 - \alpha k_3^3} \\
= -\frac{i}{3\alpha} &\sum_{k_1 + k_2 + k_3 = k}^* (k_1 + k_2)\frac{e^{-it(k^3-k_1^3-\alpha k_2^3
-\alpha k_3^3)} m_{k_1}n_{k_2}n_{k_3}}{(k_1+k_2 - c_1k)(k_1+k_2-c_2 k)}.
\end{align*}

Moving to the equation for $\p_tn$, differentiate by parts again to find
\begin{align*} 
\p_t n_k = k \sum_{k_1 +k_2 = k}^* \frac{\p _t (e^{-it(\alpha k^3 - k_1^3 -
\alpha k_2^3)} m_{k_1}n_{k_2})}{\alpha k^3 - k_1^3 - \alpha k _2^3} 
 - k \sum_{k_1 + k_2 = k}^* \frac{ \p_t(m_{k_1}n_{k_2}) e^{-it(\alpha k^3 - k_1^3 -
\alpha k_2^3)}}{\alpha k^3 - k_1 ^3 - \alpha k_2^3} \\
 - ik m_{d_1 k}n_{(1-d_1)k} - ik m_{d_2 k}n_{(1-d_2) k}.
\end{align*}
Here again, the last terms in the equality only appear when $d_1$ and $d_2$ are rational and $d_1k, d_2k \in \mathbb{Z}$. Using the differential equation, rewrite the second sum in $\p_tn_k$ as
\begin{align*}
\frac{ik}{2} &\sum_{k_1 + k_2 +k_3 = k}^* e^{-it(\alpha k^3 - \alpha k_1^3 -\alpha k_2^3 - \alpha k_3^3)} \frac{(k_1 + k_2)n_{k_1}n_{k_2}n_{k_3}}{\alpha k^3 - (k_1 + k_2)^3 - \alpha k_3^3} \\
+ ik &\sum_{k_1 + k_2 + k_3 = k}^* e^{-it(\alpha k^3 - k_1^3 - k_2^3 -\alpha k_3^3)}\frac{(k_2+k_3)m_{k_1}m_{k_2}n_{k_3}}{\alpha k^3 - k_1^3 - \alpha(k_2+k_3)^3}.
\end{align*}
Collecting all these terms and returning to $u$ and $v$ variables gives the statement of the proposition. 
\end{proof} 

We use the transformed system to get bounds on the norm of the difference between the linear and nonlinear evolution. First, integrate the new system from $0$ to $t$ to obtain
\begin{align*}
\begin{cases}
u_k(t) - e^{i k^3 t}u_k(0) = &-B_1(v,v)_k(t) + e^{ik^3t}B_1(v,v)_k(0)  \\
&+ \int_0^t e^{ik^3(t-r)} \left[ R_1(u,v,v)_k(r) + \rho_1(v,v)_k(r)\right]\;\d r \\ 
v_k(t) - e^{i\alpha k^3 t}v_k(0) = &-B_2(u,v)_k(t) + e^{i\alpha k^3t}B_2(u,v)_k(0) \\
&+ \int_0^t e^{i\alpha k^3(t-r)} \left[ R_2(v,v,v)_k(r) + R_3(u,u,v)_k(r) + \rho_2(u,v)_k(r)\right]\;\d r.  
\end{cases}
\end{align*}
To control these expressions, we use the following estimates. Propositions~\ref{B1} and \ref{B2} are proved in Section~\ref{estimateproofs}; Proposition~\ref{Rho} is immediate from the definitions of $\rho_1$ and $\rho_2$.
%%%%%%%%%%%%%%%%%%%%
%% B1 Estimate
%%%%%%%%%%%%%%%%%%%%

\begin{prop} \label{B1}
If $s > \frac12$ and $s_1 - s < \min\{ 1, s - \nu_{c} \}$, then 
\[ \| B_1(u,v) \|_{H^{s_1}_x} \lesssim \|u\|_{H_x^s}\| v \|_{H_x^s}. \]
When $\alpha = q^2/(3p(p-q) + q^2)$ for some $p, q \in \mathbb{Z}$ with $p > q$, we only require that $s_1 - s \leq 1$. 
\end{prop}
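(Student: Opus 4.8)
The plan is to treat this as a purely spatial bilinear Sobolev estimate and reduce it to a single-variable summability bound, after which the whole difficulty concentrates in the small denominators controlled by the type index $\nu_c$. First I would pass to Fourier coefficients and write, with $a_{k_1} = \langle k_1\rangle^s |u_{k_1}|$ and $b_{k_2} = \langle k_2\rangle^s |v_{k_2}|$,
\[ \langle k\rangle^{s_1}|B_1(u,v)_k| \lesssim \sum_{k_1 + k_2 = k}^* M(k,k_1)\, a_{k_1} b_{k_2}, \]
where the key preliminary step is to \emph{factor the denominator}. Since $k^3 - \alpha k_1^3 - \alpha k_2^3 = -3\alpha k\,(k_1 - c_1 k)(k_1 - c_2 k)$ when $k_2 = k-k_1$, the prefactor $k$ in the definition of $B_1$ cancels and the multiplier becomes
\[ M(k,k_1) = \frac{\langle k\rangle^{s_1}\langle k_1\rangle^{-s}\langle k-k_1\rangle^{-s}}{|k_1 - c_1 k|\,|k_1 - c_2 k|}. \]
Because $\|a\|_{\ell^2} = \|u\|_{H^s}$ and $\|b\|_{\ell^2} = \|v\|_{H^s}$, a Cauchy--Schwarz argument in $k_1$ followed by summation in $k$ gives
\[ \big\|\langle k\rangle^{s_1} B_1(u,v)_k\big\|_{\ell^2_k}^2 \le \Big(\sup_k \sum_{k_1}^* M(k,k_1)^2\Big)\,\|a\|_{\ell^2}^2\,\|b\|_{\ell^2}^2, \]
so it suffices to establish the Schur-type bound $\sup_k \sum_{k_1}^* M(k,k_1)^2 < \infty$.

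Next I would split the $k_1$-sum into a non-resonant region and two resonant regions centered at the roots $k_1 \simeq c_1 k$ and $k_1 \simeq c_2 k$. For $\alpha \in (0,1)$ one has $c_1 > 1 > 0 > c_2$ with $c_1 - c_2 = \sqrt{(4-\alpha)/(3\alpha)}$ bounded below, so the two roots never collide and lie a distance $\gtrsim \langle k\rangle$ apart; moreover near either root both $\langle k_1\rangle$ and $\langle k-k_1\rangle$ are comparable to $\langle k\rangle$. In the non-resonant region the product of the two linear factors is $\gtrsim \langle k\rangle^2$, and combined with the convolution estimate $\sum_{k_1}\langle k_1\rangle^{-2s}\langle k-k_1\rangle^{-2s}\lesssim \langle k\rangle^{-2s}$ (valid for $s>\tfrac12$) this contribution is comfortably summable under the hypothesis $s_1 - s < 1$.

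The main obstacle is the resonant region, which is exactly where the Diophantine input enters. Near $k_1 = c_1 k$ the second factor satisfies $(k_1 - c_2 k)^2 \approx \langle k\rangle^2$, so after using $\langle k_1\rangle^{-2s}\langle k-k_1\rangle^{-2s}\approx \langle k\rangle^{-4s}$ the resonant sum is controlled by $\langle k\rangle^{2s_1 - 4s - 2}\sum_{k_1 \text{ near } c_1 k}(k_1 - c_1 k)^{-2}$. Here I would invoke that $c_1$ is of type $\nu$ for every $\nu > \nu_c$, which yields $\mathrm{dist}(c_1 k,\mathbb{Z})\gtrsim \langle k\rangle^{-(1+\nu)}$: the single integer nearest $c_1 k$ then contributes at most $\langle k\rangle^{2(1+\nu)}$, while all remaining integers sit at distance $\gtrsim 1$ and contribute $O(1)$. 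Hence the resonant sum is $\lesssim \langle k\rangle^{2(s_1 - 2s + \nu)}$, which is bounded uniformly in $k$ precisely when $s_1 - s < s - \nu_c$ (letting $\nu \downarrow \nu_c$); the root at $c_2 k$ is identical because $\nu_{c_2} = \nu_{c_1} = \nu_c$. Combining the regions gives the stated $s_1 - s < \min\{1, s - \nu_c\}$.

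Finally, I would treat the arithmetic special case separately. When $\alpha = q^2/(3p(p-q)+q^2)$ the numbers $c_1, c_2$ are rational, so $c_i k$ is either an integer---in which case the corresponding term is excluded by the $\sum^*$ and absorbed into the resonant term $\rho_1$ of Proposition~\ref{dbp}---or lies at distance $\gtrsim 1$ from every integer, with the implicit constant depending only on the denominators of $c_1,c_2$. The near-root denominators are then bounded below by a constant independent of $k$, so the resonance costs no powers of $\langle k\rangle$ and only the non-resonant requirement $s_1 - s \le 1$ survives, as claimed.
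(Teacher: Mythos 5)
Your proposal is correct and takes essentially the same route as the paper's proof: after factoring $k^3-\alpha k_1^3-\alpha k_2^3=-3\alpha k(k_1-c_1k)(k_1-c_2k)$ so that the prefactor $k$ cancels, you split according to the distance of $k_1$ to the roots $c_1k$, $c_2k$, invoke the type-index lower bound $|k_1-c_ik|\gtrsim |k|^{-1-\nu_c-\epsilon_0}$ for the nearest integer in the resonant window, and observe that rational $c_i$ make the near-root denominators bounded below, exactly as in the paper's Cases 1 and 2. The only difference is packaging: you run a single Cauchy--Schwarz (Schur-test) reduction to $\sup_k\sum_{k_1}M(k,k_1)^2<\infty$, with $\epsilon|k|$-wide resonant windows summed via the nearest integer plus an $O(1)$ tail, where the paper instead uses the symmetry reduction $|k_1|\gtrsim|k_2|$ with Young's inequality off resonance and an $\ell^4$ interpolation for the single near-resonant term --- both yield the stated thresholds $s_1-s<\min\{1,s-\nu_c\}$ and, in the rational case, $s_1-s\le 1$.
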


%%%%%%%%%%%%%%%%%%%%
%% B2 Estimate
%%%%%%%%%%%%%%%%%%%%
\begin{prop} \label{B2}
Let $u \in \dot{H}^s$. If $s > \frac12$ and $s_1 -s < \min\{ 1,  s - \nu_d\}$, then
\[ \|B_2(u,v)\|_{H^{s_1}_x} \lesssim \|u\|_{H^s_x} \|v\|_{H^s_x}. \]
When $\alpha = q^2/(3p(p-q) + q^2)$ for some $p, q \in \mathbb{Z}$ with $p > q$, we only need $s_1 - s \leq \min\{1,s - \}$. 
\end{prop}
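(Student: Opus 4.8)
The plan is to reduce the claimed inequality to a single uniform (Schur-type) bound on a bilinear multiplier, and then to dissect the resonance denominator according to the position of $k_1$ relative to its roots. Writing $a_{k_1} = \lb k_1 \rb^s |u_{k_1}|$ and $b_{k_2} = \lb k_2 \rb^s |v_{k_2}|$, so that $\|a\|_{\ell^2} = \|u\|_{H^s}$ and $\|b\|_{\ell^2} = \|v\|_{H^s}$, the quantity $\lb k \rb^{s_1}|B_2(u,v)_k|$ is dominated by $\sum_{k_1+k_2=k}^* M(k,k_1)\, a_{k_1} b_{k_2}$, where
\[ M(k,k_1) = \frac{\lb k \rb^{s_1}\,|k|\,\lb k_1 \rb^{-s}\lb k_2 \rb^{-s}}{|\alpha k^3 - k_1^3 - \alpha k_2^3|}, \qquad k_2 = k - k_1. \]
Applying Cauchy--Schwarz in $k_1$ and then summing in $k$, the proposition follows once I establish $\sup_k \sum_{k_1+k_2=k}^* M(k,k_1)^2 \lesssim 1$. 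The hypothesis $s > \tfrac12$ is used repeatedly, since it renders tail sums of the form $\sum_{k_1} \lb k_1 \rb^{-2s}$ finite and lets me dispose of non-resonant regions cheaply.

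The first step is the factorization
\[ \alpha k^3 - k_1^3 - \alpha k_2^3 = (\alpha - 1)\, k_1\,(k_1 - d_1 k)(k_1 - d_2 k), \]
which exhibits the three roots $k_1 = 0,\ d_1 k,\ d_2 k$, with $d_1, d_2$ as defined in the text. Because $u \in \dot H^s$ is mean zero, the factor $u_{k_1}$ vanishes when $k_1 = 0$, so the root at the origin never contributes and I may assume $|k_1| \geq 1$. In the region where $k_1$ is small and $k_2 \approx k$, the denominator behaves like $|k_1|\,|k|^2$ while the remaining two linear factors are both $\approx |k|$; a short computation then gives $M^2 \approx \lb k \rb^{2s_1 - 2s - 2}\,\lb k_1 \rb^{-2s}/k_1^2$, which is summable uniformly in $k$ exactly when $s_1 - s < 1$. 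This is the mechanism producing the cap $s_1 - s < 1$, and it is precisely why the mean-zero assumption on $u$ is indispensable: it removes an otherwise genuine singularity of the multiplier.

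The heart of the argument is the two resonant regions, where $k_1$ lies close to $d_1 k$ or to $d_2 k$. There $|k_1|$, $|k_2|$, and the non-resonant linear factor are all comparable to $|k|$, so $M^2$ is of size $\lb k \rb^{2s_1 - 4s - 2}/(k_1 - d_i k)^2$. I will control the cluster sum $\sum (k_1 - d_i k)^{-2}$ by isolating the single integer $k_1$ nearest to $d_i k$ (the remaining terms sum to $\mathcal{O}(1)$ by integer spacing). For this nearest term the Diophantine hypothesis enters in the form $|d_i - k_1/k| \geq K|k|^{-2-\nu_{d_i}}$, hence $|k_1 - d_i k| \gtrsim \lb k \rb^{-1-\nu_d}$ and $(k_1 - d_i k)^{-2} \lesssim \lb k \rb^{2 + 2\nu_d}$. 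The resonant contribution is therefore $\lesssim \lb k \rb^{2s_1 - 4s + 2\nu_d}$, which is uniformly bounded precisely when $s_1 - s \leq s - \nu_d$. Combining this with the cap from the previous paragraph yields the condition $s_1 - s < \min\{1, s - \nu_d\}$.

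For the exceptional rationals $\alpha = q^2/(3p(p-q)+q^2)$ one has $d_1, d_2 \in \Q$, so $\nu_d = \infty$ and the type bound is vacuous; instead I exploit that a rational root $d_i = a/b$ keeps $d_i k$ at distance $\geq 1/b$ from every integer unless $d_i k \in \mathbb{Z}$, in which case $k_1 = d_i k$ is an exact zero already excluded by $\sum^*$. Either way the nearest contributing $k_1$ obeys $|k_1 - d_i k| \gtrsim_\alpha 1$, so the resonant regions are harmless and the surviving constraints come from the small-$k_1$ cap together with a careful accounting of the borderline endpoint sums, giving $s_1 - s \leq \min\{1, s-\}$. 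The main obstacle throughout is the resonant regime: unlike the local-theory multipliers, the denominator here can be arbitrarily small, and the whole estimate hinges on extracting the correct $\nu_d$-dependent exponent and verifying that the near-resonant cluster is dominated by its single worst term.
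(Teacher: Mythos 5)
Your proof is correct and follows essentially the same route as the paper's: the same factorization $\alpha k^3 - k_1^3 - \alpha k_2^3 = (\alpha-1)k_1(k_1-d_1k)(k_1-d_2k)$, the same three regions (small $k_1$ handled via the mean-zero hypothesis, the generic region where the denominator is $\gtrsim |k||k_1|$, and the near-resonant clusters controlled by the minimal type index), with your Schur-type reduction $\sup_k \sum_{k_1} M(k,k_1)^2 \lesssim 1$ serving as a compact repackaging of the paper's Young's-inequality and Cauchy--Schwarz/$\ell^4$ estimates and producing the identical exponent conditions. One small precision: since $\nu_{d_i}$ is defined as an infimum, the Diophantine bound should carry an arbitrary $\epsilon_0 > 0$, i.e.\ $|k_1 - d_i k| \gtrsim_{\epsilon_0} |k|^{-1-\nu_{d_i}-\epsilon_0}$, which is harmless here because the hypothesis $s_1 - s < s - \nu_d$ is strict.
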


%%%%%%%%%%%%%%%%%%%%
%% Rho Estimate
%%%%%%%%%%%%%%%%%%%%
\begin{prop} \label{Rho}
If $s_1 - s \leq s -1$, then
\[ \| \rho_1(u,v)\|_{H^{s_1}_x} \lesssim \| u\|_{H^s_x}\| v\|_{H^s_x} \qquad \text{and} \qquad   \| \rho_2(u,v)\|_{H^{s_1}_x} \lesssim \| u\|_{H^s_x} \| v\|_{H^s_x}. \]
\end{prop}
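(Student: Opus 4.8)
The plan is to exploit the fact that $\rho_1$ and $\rho_2$ are not genuine convolutions: for each fixed $k$ the Fourier coefficient is a single product (a sum of two products, in the case of $\rho_2$) of coefficients of $u$ and $v$ at frequencies that are fixed rational multiples of $k$. The entire content of the estimate is therefore the algebraic observation that $c_1 + c_2 = 1$, together with the analogous $d_i + (1-d_i) = 1$, so that the two arguments in each product sum to $k$. Since $\alpha \in (0,1)$ is fixed, the constants $c_1, c_2, d_1, d_2, 1-d_1, 1-d_2$ are all nonzero, whence $\lb c_i k\rb \approx \lb k\rb$ and $\lb(1-d_i)k\rb \approx \lb d_i k\rb \approx \lb k\rb$, with implied constants depending only on $\alpha$.

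First I would write, directly from the definition,
\[ \|\rho_1(u,v)\|_{H^{s_1}_x} = \big\| \lb k\rb^{s_1}\, k\, u_{c_1 k}\, v_{c_2 k}\big\|_{\ell^2_k}. \]
Using $|k| \lesssim \lb k\rb$ and the hypothesis $s_1 + 1 \le 2s$ (equivalent to $s_1 - s \le s-1$), I bound the multiplier by $\lb k\rb^{s_1}|k| \lesssim \lb k\rb^{s_1+1} \lesssim \lb k\rb^{2s}$, and then split this weight across the two factors via $\lb k\rb^{2s} \approx \lb c_1 k\rb^{s}\lb c_2 k\rb^{s}$. This is precisely where the hypothesis enters, and it is sharp: without $s_1 \le 2s-1$ there is no way to absorb the extra derivative coming from the factor $k$.

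It then remains to estimate $\big\|(\lb c_1 k\rb^s u_{c_1 k})(\lb c_2 k\rb^s v_{c_2 k})\big\|_{\ell^2_k}$, which I would control with the elementary inequality $\|a_k b_k\|_{\ell^2_k} \le \|a_k\|_{\ell^2_k}\|b_k\|_{\ell^\infty_k}$, taking $a_k = \lb c_1 k\rb^s u_{c_1 k}$ and $b_k = \lb c_2 k\rb^s v_{c_2 k}$. Since $c_1 \neq 0$, the map $k \mapsto c_1 k$ is injective on the set of integers $k$ for which the resonant term is present, so $\sum_k \lb c_1 k\rb^{2s}|u_{c_1 k}|^2$ is a subsum of $\sum_j \lb j\rb^{2s}|u_j|^2 = \|u\|_{H^s_x}^2$, giving $\|a_k\|_{\ell^2_k} \le \|u\|_{H^s_x}$. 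Likewise $\|b_k\|_{\ell^\infty_k} = \sup_k \lb c_2 k\rb^s|v_{c_2 k}| \le \sup_j \lb j\rb^s |v_j| \le \|v\|_{H^s_x}$. Combining these yields the bound for $\rho_1$.

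For $\rho_2$ I would apply the triangle inequality in $H^{s_1}_x$ to the two summands $k\, u_{d_1 k}v_{(1-d_1)k}$ and $k\, u_{d_2 k}v_{(1-d_2)k}$ and treat each exactly as above, using $d_i + (1-d_i) = 1$ in place of $c_1 + c_2 = 1$. I do not expect a genuine obstacle here: because no convolution sum appears, there is no resonant set to control, and the argument is essentially bookkeeping of the $\alpha$-dependent comparability constants. The only point requiring mild care is the reindexing $k \mapsto c_i k$ (respectively $d_i k$), which must map injectively into $\mathbb{Z}$ — this holds precisely because these terms are present only when the relevant coefficients are rational and the arguments are integers.
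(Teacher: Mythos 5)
Your proof is correct and is precisely the argument the paper intends: the paper offers no written proof at all, remarking only that the proposition ``is immediate from the definitions of $\rho_1$ and $\rho_2$,'' the point being exactly what you identified --- there is no convolution sum, only a single product (two for $\rho_2$) at frequencies that are fixed nonzero rational multiples of $k$. Your bookkeeping, namely $\lb k \rb^{s_1}|k| \lesssim \lb k \rb^{2s} \approx \lb c_1 k \rb^{s} \lb c_2 k \rb^{s}$ under the hypothesis $s_1 + 1 \leq 2s$, followed by the $\ell^2 \times \ell^\infty$ estimate with the injective reindexings $k \mapsto c_i k$ (and $k \mapsto d_i k$, $k \mapsto (1-d_i)k$, all valid since $c_1 > 1$ and $c_2 < 0$ for $\alpha \in (0,1)$, so none of these constants vanish), is exactly the omitted verification.
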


Using Propositions \ref{B1}-\ref{Rho} on the equations found above, write, for $s_1 -s $ sufficiently small, 
\begin{align*}
\| u(t) - e^{-t\p^3_x}u_0 \|_{H^{s_1}_x} &\lesssim \; \| v(t) \|_{H^s_x}^2 + \| v(0)  \|_{H^s_x}^2 + \int_0^t \|v(r)\|_{H^s_x}^2 \;\d r  \\
&+ \; \left\|\int_0^t e^{-(t-r)\p^3_x}  R_1(u,v,v)(r)\;\d r \right\|_{H^{s_1}_x} \\
\| v(t) - e^{-\alpha t\p^3_x}v_0 \|_{H^{s_1}_x} &\lesssim \; \| u(t) \|_{H^s_x} \| v(t)\|_{H^s_x} + \|u(0) \|_{H^s_x}\|v(0)\|_{H^s_x}  + \int_0^t \|u(r)\|_{H^s_x}\|v(r)\|_{H^s_x} \;\d r  \\
&+ \; \left\|\int_0^t e^{-\alpha(t-r)\p^3_x}[R_2(v,v,v)(r) + R_3(u,u,v)(r)]\;\d r \right\|_{H^{s_1}_x}. 
\end{align*}

To complete the estimates, we need the following bounds for $R_1$, $R_2$, and $R_3$. See Section~\ref{estimateproofs} for proofs. 

%%%%%%%%%%%%%%%%%%%%
%% R1 Estimate
%%%%%%%%%%%%%%%%%%%%
\begin{prop} \label{R1}
Let $u \in \dot{H}^s$. For $b - \frac 12$ sufficiently small, $s > \frac12$, and $s_1 - s < \min\{1, 2s - 1 - \nu_{c_1}, s - \frac12 , s + \frac12 - \nu_{c_1}\}$, we have
\[  \|R_1(u,v,w) \|_{X^{s_1,b-1}_1} \lesssim \|u\|_{X^{s,1/2}_1} \|v\|_{X^{s,1/2}_\alpha}\|w\|_{X^{s,1/2}_\alpha}. \]
When $\alpha = q^2/(3p(p-q) + q^2)$ for $p, q \in \mathbb{Z}$ with $p > q$, we only need $s_1 - s \leq \min\{1,s - \frac12 \}$. 
\end{prop}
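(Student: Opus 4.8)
The plan is to argue by duality and reduce the $X^{s_1,b-1}_1$ estimate to a quadrilinear bound, and then to a single multiplier sum controlled by Cauchy--Schwarz. Writing $\sigma = \tau - k^3$, $\sigma_1 = \tau_1 - k_1^3$, $\sigma_2 = \tau_2 - \alpha k_2^3$, $\sigma_3 = \tau_3 - \alpha k_3^3$ and pairing $R_1(u,v,w)$ against a function $h \in X^{-s_1,1-b}_1$, the claim is equivalent to
\[ \sum_{k_1+k_2+k_3=k}^*\int_{\tau_1+\tau_2+\tau_3=\tau} \frac{M\, H_k U_{k_1}V_{k_2}W_{k_3}}{\lb\sigma\rb^{1-b}\lb\sigma_1\rb^{1/2}\lb\sigma_2\rb^{1/2}\lb\sigma_3\rb^{1/2}}\lesssim\|H\|_{2}\|U\|_{2}\|V\|_{2}\|W\|_{2}, \]
where $H,U,V,W$ are the nonnegative $L^2$ functions carrying the Sobolev and dispersive weights of $h,u,v,w$ and $M = |k_1+k_2|\lb k\rb^{s_1}\big(|k_1+k_2-c_1k|\,|k_1+k_2-c_2k|\,\lb k_1\rb^s\lb k_2\rb^s\lb k_3\rb^s\big)^{-1}$. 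Reserving $H$ for the output pairing and applying Cauchy--Schwarz in the remaining variables, I would reduce matters to a bound of the form $\sup_{k,\sigma}\sum_{k_1,k_2}^* M^2\int \frac{d\tau_1\,d\tau_2}{\lb\sigma\rb^{2(1-b)}\lb\sigma_1\rb\lb\sigma_2\rb\lb\sigma_3\rb}\lesssim 1$, where a modulation gain is extracted before the application of Cauchy--Schwarz.

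Two algebraic facts drive the estimate. First, as computed in the proof of Proposition~\ref{dbp}, the denominator factors as $k^3-\alpha(k_1+k_2)^3-\alpha k_3^3 = -3\alpha k(k_1+k_2-c_1k)(k_1+k_2-c_2k)$, which is exactly the quantity appearing in $M$. Second, the dispersive relation gives $\sigma-\sigma_1-\sigma_2-\sigma_3 = -\Omega$ with $\Omega = k^3-k_1^3-\alpha k_2^3-\alpha k_3^3$, so that $\max(\lb\sigma\rb,\lb\sigma_1\rb,\lb\sigma_2\rb,\lb\sigma_3\rb)\gtrsim\lb\Omega\rb$. The essential difficulty, flagged in the introduction, is that $\Omega$ is \emph{not} the denominator polynomial $\Phi = k^3-\alpha(k_1+k_2)^3-\alpha k_3^3$: the differentiation by parts produced $\Phi$, which controls the small divisor, while the $X^{s,b}$ weights only see $\Omega$. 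I would split into four regions according to which modulation is largest; in each the largest weight dominates $\lb\Omega\rb$, and since $b-\tfrac12$ is small every weight carries almost $\tfrac12$ a power, so I can pull out a factor $\lb\Omega\rb^{-\theta}$ with $\theta$ up to nearly $\tfrac12$ (nearly $1$ after squaring) while leaving enough modulation to make the $\tau_1,\tau_2$ integrations converge.

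It then remains to sum the spatial multiplier. When $k_1+k_2$ is bounded away from both $c_1k$ and $c_2k$ (the non-resonant region), one has $\Phi\gtrsim\lb k\rb^2$, and using $|k_1+k_2|\lesssim\lb k\rb$ together with the convolution bound $\sum_{k_1+k_2=j}\lb k_1\rb^{-2s}\lb k_2\rb^{-2s}\lesssim\lb j\rb^{-2s}$, valid for $s>\tfrac12$, the sum is finite provided $s_1-s<\min\{1,s-\tfrac12\}$. The delicate region is the near-resonant one, say $k_1+k_2\approx c_1k$, where the minimal-type hypothesis gives $|k_1+k_2-c_1k|\gtrsim\lb k\rb^{-1-\nu_{c_1}-}$ while $|k_1+k_2-c_2k|\approx\lb k\rb$. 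Here I would separate two sub-cases. For a \emph{generic} splitting of $k_1+k_2$ one has $\lb\Omega\rb\approx\lb k\rb^3$, so the full modulation gain is available and, after the convolution bound in the splitting, the worst term is of order $\lb k\rb^{2(s_1-s)-2s-1+2\nu_{c_1}}$, which is summable exactly when $s_1-s<s+\tfrac12-\nu_{c_1}$. Near a \emph{double resonance}, where $\Omega$ is also small, the gain from $\Omega$ is no longer usable (summing $\lb\Omega\rb^{-\theta}$ across the splitting diverges) and the three frequencies are all comparable to $\lb k\rb$; estimating the single bad term with no modulation gain produces the constraint $s_1-s<2s-1-\nu_{c_1}$.

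The main obstacle is precisely this double-resonance region: because $\Phi$ and $\Omega$ are genuinely different cubics, one cannot simultaneously exploit the minimal-type lower bound that tames $\Phi$ and a modulation gain from $\Omega$, and this worst case pins down the most restrictive threshold $2s-1-\nu_{c_1}$. Finally, in the special case $\alpha = q^2/(3p(p-q)+q^2)$ the roots $c_1,c_2$ are rational, so on the set $\sum^*$ (where $\Phi\neq0$) one has the uniform bound $|k_1+k_2-c_ik|\gtrsim 1$; this eliminates the $\nu_{c_1}$-losses and collapses all the resonant constraints, leaving only $s_1-s\leq\min\{1,s-\tfrac12\}$. The exact resonances $k_1+k_2=c_ik$ excluded from $\sum^*$ are carried by the separate term $\rho_1$ and estimated in Proposition~\ref{Rho}.
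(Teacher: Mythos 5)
Your overall architecture is essentially the paper's: reduce to a weighted multiplier supremum by Cauchy--Schwarz (the paper does this directly with Young's inequality rather than by duality, an immaterial difference), use the factorization $k^3-\alpha(k_1+k_2)^3-\alpha k_3^3 = 3\alpha k(n-c_1k)(n-c_2k)$ with $n=k_1+k_2$, invoke the minimal-type bound $|n-c_ik|\gtrsim |k|^{-1-\nu_{c_i}-}$ near resonance, extract the modulation gain $\lb \Omega\rb^{-(2-2b)}$ after integrating out the $\tau$-variables, and observe that rational $c_i$ together with the restriction to $\sum^*$ gives $|n-c_ik|\gtrsim 1$, collapsing the $\nu$-losses. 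However, there is a genuine gap precisely in the region you yourself flag as decisive. With $n\simeq c_1k$ fixed and $k_1$ running over splittings with $|k_1|,|n-k_1|\gtrsim |k|$, you assert that summing $\lb\Omega\rb^{-\theta}$ over the splitting diverges, and you then bound only ``the single bad term with no modulation gain.'' That does not close the estimate: there are $\sim|k|$ admissible values of $k_1$, and discarding the modulation weight on all of them costs a full power of $|k|$, degrading the threshold from $2s-1-\nu_{c_1}$ to roughly $2s-\frac32-\nu_{c_1}$. Moreover the divergence claim is false, and fortunately so: $\Omega = k^3-k_1^3-\alpha(n-k_1)^3-\alpha(k-n)^3$ is a nondegenerate cubic in $k_1$ (leading coefficient $\alpha-1\neq 0$), so $\#\{k_1: |\Omega|\leq M\}\lesssim M^{1/3}+1$ uniformly in the remaining parameters, whence $\sum_{k_1}\lb\Omega\rb^{-(2-2b)}\lesssim 1$ for $2-2b>\frac13$. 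This uniform cubic-sum bound --- the paper's Lemma~\ref{sumest}(b), applied in its Case 2.3a --- is the missing ingredient: the $k_1$-sum contributes only $O(1)$, so the worst-term power counting $\lb k\rb^{2s_1+2+2\nu_{c_1}-6s}$ that you wrote down is indeed the correct total, giving $s_1-s<2s-1-\nu_{c_1}$.

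Three smaller points. First, your non-resonant argument leans on ``$|k_1+k_2|\lesssim \lb k\rb$,'' which is false in general since $k_3$ is unconstrained; the repair, used in the paper, is a sign analysis: for $kn>0$ one has $c_2<0$, so $n^2$ cancels against $(n-c_2k)^2$ (against $(n-c_1k)^2$ when $kn<0$), after which the remaining quadratic factor supplies the needed decay. Second, your dichotomy ``generic ($\lb\Omega\rb\approx\lb k\rb^3$) versus double resonance'' misses the hyperplane $k_1=k$ (equivalently $k_2+k_3=0$), on which $\Omega\equiv 0$ identically while $k_2$ may be arbitrarily large, so neither of your regimes describes it; the paper isolates this as a separate case where the factor $(n-c_1k)^{-2}(n-c_2k)^{-2}$ alone, together with $\lb n-k\rb^{-4s}$, must carry the sum. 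Third, in your generic near-resonant subcase the bound $\lb\Omega\rb\approx\lb k\rb^3$ is not what one actually has; the paper proves the weaker lower bounds $|\Omega|\gtrsim |k-k_1||k|$ when $|k_1|\ll|k|$ (an explicit computation using $1-\alpha=3\alpha c_1(c_1-1)$, and the place where the mean-zero hypothesis $k_1\neq 0$ is genuinely needed, a point absent from your write-up) and $|\Omega|\gtrsim |k-k_1|k^2$ when $|n-k_1|\ll |k|$, yielding the conditions $s_1-s<s+1-2b-\nu_{c_1}$ and $s_1-s<s+2-3b-\nu_{c_1}$ as $b\downarrow\frac12$.
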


%%%%%%%%%%%%%%%%%%%%
%% R2 Estimate
%%%%%%%%%%%%%%%%%%%%
\begin{prop} \label{R2}
For $b - \frac12$ sufficiently small, $s > \frac12$, and $s_1 - s < \min\{2s - 1 - \nu_d, s + \frac12 - \nu_d \}$,
\[ \| R_2(u,v,w)\|_{X^{s_1, b-1}_\alpha} \lesssim  \| u \|_{X^{s,1/2}_\alpha}\| v \|_{X^{s,1/2}_\alpha} \| w \|_{X^{s,1/2}_\alpha}. \]
When $\alpha = q^2/(3p(p-q) + q^2)$ for $p, q \in \mathbb{Z}$ with $p > q$, we only need $s_1 - s \leq 1$.
\end{prop}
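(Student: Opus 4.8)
The plan is to prove the estimate by duality, reducing it to a pointwise bound on a trilinear multiplier together with standard $L^2$ summation. Writing the three inputs as $u,v,w \in X^{s,1/2}_\alpha$ and pairing $R_2(u,v,w)$ against a test function $g \in X^{-s_1,1-b}_\alpha$, the claim is equivalent to
\[ \sum_{k=k_1+k_2+k_3}^* \int_{\tau=\tau_1+\tau_2+\tau_3} M\, \hat g_k \hat u_{k_1}\hat v_{k_2}\hat w_{k_3} \lesssim \|g\|_{X^{-s_1,1-b}_\alpha}\|u\|_{X^{s,1/2}_\alpha}\|v\|_{X^{s,1/2}_\alpha}\|w\|_{X^{s,1/2}_\alpha}, \]
where, after substituting the Bourgain weights, the multiplier is
\[ M = \frac{\lb k\rb^{s_1}|k|\,|k_1+k_2|}{|\alpha k^3-(k_1+k_2)^3-\alpha k_3^3|\,\lb L_0\rb^{1-b}\lb k_1\rb^s\lb k_2\rb^s\lb k_3\rb^s\lb L_1\rb^{1/2}\lb L_2\rb^{1/2}\lb L_3\rb^{1/2}}, \]
with $L_0=\tau-\alpha k^3$ and $L_i=\tau_i-\alpha k_i^3$. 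The whole proof amounts to controlling $M$.

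First I would exploit the algebra of the denominator. Setting $\mu=k_1+k_2=k-k_3$, the cubic factors as $\alpha k^3-\mu^3-\alpha(k-\mu)^3=(\alpha-1)\mu(\mu-d_1k)(\mu-d_2k)$, so the factor $|k_1+k_2|=|\mu|$ in the numerator cancels the $\mu$ in the denominator, leaving the effective resonant factor $1/|(\mu-d_1k)(\mu-d_2k)|$ times the harmless constant $|\alpha-1|^{-1}$. Next I would record the modulation identity: since all three inputs and the output carry the phase $\alpha k_i^3$, we have $L_1+L_2+L_3-L_0=\alpha(k^3-k_1^3-k_2^3-k_3^3)=3\alpha\mu(k_2+k_3)(k_1+k_3)=:\Omega$, so that $\max\{\lb L_0\rb,\lb L_1\rb,\lb L_2\rb,\lb L_3\rb\}\gtrsim\lb\Omega\rb$. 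This resonance function is the only source of gain, and it is what must beat the numerator $\lb k\rb^{s_1}|k|$ together with the resonant loss.

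The heart of the argument is the lower bound on $|(\mu-d_1k)(\mu-d_2k)|$ via the minimal type index. When $\mu$ is within distance $1$ of neither $d_1k$ nor $d_2k$, the product is $\gtrsim1$. When $\mu$ is near one root, say $\mu\approx d_1k$, the Diophantine bound $|d_1-\mu/k|\gtrsim\lb k\rb^{-2-\nu_{d_1}-}$ gives $|\mu-d_1k|\gtrsim\lb k\rb^{-1-\nu_d-}$, while the other factor obeys $|\mu-d_2k|\approx|d_1-d_2|\lb k\rb\approx\lb k\rb$ because the roots are real and distinct for $\alpha\in(0,1)$ and are separated by $\approx\lb k\rb$; hence the worst-case loss from the resonant denominator is $\lb k\rb^{\nu_d+}$. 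I would then run a case analysis on which of $\lb L_0\rb,\dots,\lb L_3\rb$ realizes the maximum, using a fractional power $\lb L_{\max}\rb^{1/2-}\gtrsim\lb\Omega\rb^{1/2-}$ (here the hypothesis that $b-\tfrac12$ is small ensures $\lb L_0\rb^{1-b}$ still supplies $\lb\Omega\rb^{1/2-}$ when $L_0$ is largest) to extract powers of $\mu$, $k_2+k_3$, $k_1+k_3$. I then integrate out the two remaining modulation variables in $\tau$ via the elementary bound $\int\lb\tau-a\rb^{-1}\lb\tau-b\rb^{-1}\,\d\tau\lesssim\lb a-b\rb^{-1+}$, reducing to a purely spatial sum over two of the frequencies, which converges using $s>\tfrac12$ once the surviving powers of $\lb k\rb$ are nonpositive. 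Tracking those powers is exactly what forces $s_1-s<\min\{2s-1-\nu_d,\,s+\tfrac12-\nu_d\}$; the two competing constraints reflect the two distinct ways the single factor $\lb\Omega\rb^{1/2}$ must be apportioned between the numerator $\lb k\rb^{s_1}|k|$ and the resonant loss.

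I expect the main obstacle to be the bookkeeping in the near-resonant regime $\mu\approx d_ik$, where the $\lb k\rb^{\nu_d+}$ loss from the resonant denominator must be paid out of the lone factor $\lb\mu(k_2+k_3)(k_1+k_3)\rb^{1/2}$ supplied by the modulation; since $\mu\approx d_ik\approx\lb k\rb$ is itself large there, one must allocate the gain carefully between cancelling the resonant loss and controlling the numerator, and verify that no configuration of comparable frequencies defeats the final summation. The rational case $\alpha=q^2/(3p(p-q)+q^2)$ is genuinely easier: there $d_1,d_2$ are rational with bounded denominator, so every surviving term in $\sum^*$ has $|\mu-d_ik|\gtrsim1$, the resonant loss disappears entirely, and the estimate closes under the weaker hypothesis $s_1-s\leq1$.
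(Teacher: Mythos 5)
Your proposal is correct and takes essentially the same route as the paper: the duality pairing plus Cauchy--Schwarz reduction is equivalent to the paper's $\sup_{k,\tau}$ bound on the squared multiplier, and the key ingredients coincide --- the factorization $\alpha k^3-\mu^3-\alpha(k-\mu)^3=(\alpha-1)\mu(\mu-d_1k)(\mu-d_2k)$ with cancellation of the numerator factor $|k_1+k_2|$, the Diophantine lower bound $|\mu-d_ik|\gtrsim\langle k\rangle^{-1-\nu_d-}$ combined with the $O(\langle k\rangle)$ separation of the roots, the total-modulation identity $\Omega=3\alpha(k_1+k_2)(k_2+k_3)(k_1+k_3)$, and the $\tau$-integration lemma. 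One small caveat on your closing heuristic: the constraint $s_1-s<2s-1-\nu_d$ arises on the hyperplanes where $\Omega$ vanishes (the paper's Cases 1 and 2, e.g.\ $k_1=k$), where no modulation gain is available at all and the three $\langle k_i\rangle^{-s}$ weights alone must absorb $\langle k\rangle^{s_1+1+\nu_d}$, rather than from apportioning $\langle\Omega\rangle^{1/2}$ --- but your max-modulation case analysis covers this regime automatically.
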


%%%%%%%%%%%%%%%%%%%%
%% R3 Estimate
%%%%%%%%%%%%%%%%%%%%
\begin{prop} \label{R3} Let $u \in \dot{H}^s$. 
For $b - \frac12$ sufficiently small, $s > \frac12$, and $s_1 - s < \min\{\frac12, s -\frac12, 2s - 1 - \nu_d, s + \frac12 - \nu_d \}$,
\[ \| R_3(u,u,v)\|_{X^{s_1, b-1}_\alpha} \lesssim  \| u \|_{X^{s, 1/2}_1}^2 \| v \|_{X^{s,1/2}_\alpha}. \]
When $\alpha = q^2/(3p(p-q) + q^2)$ for $p, q \in \mathbb{Z}$ with $p > q$, we only need $s_1 - s \leq \min\{\frac12 -,s - \frac12 \}$.
\end{prop}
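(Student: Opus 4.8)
The plan is to prove the estimate by duality, reducing it to a pointwise bound on a multiplier. Write $\sigma = \tau - \alpha k^3$, and, recalling that the two $u$-factors carry the dispersion $k^3$ while the $v$-factor carries $\alpha k^3$, set $\sigma_1 = \tau_1 - k_1^3$, $\sigma_2 = \tau_2 - k_2^3$, and $\sigma_3 = \tau_3 - \alpha k_3^3$. Substituting $\widehat{u}_{k_i}(\tau_i) = \lb k_i\rb^{-s}\lb\sigma_i\rb^{-1/2} f_i$ for $i = 1,2$ and $\widehat{v}_{k_3}(\tau_3) = \lb k_3\rb^{-s}\lb\sigma_3\rb^{-1/2} f_3$, so that $\|f_1\|_{L^2} = \|f_2\|_{L^2} = \|u\|_{X^{s,1/2}_1}$ and $\|f_3\|_{L^2} = \|v\|_{X^{s,1/2}_\alpha}$, and pairing against a function $g$ with $\|g\|_{L^2_\tau \ell^2_k} = 1$, the claim reduces to bounding the quadrilinear form whose multiplier is
\[ M = \frac{|k|\,\lb k\rb^{s_1}\,|k_2+k_3|}{|\Phi|\,\lb\sigma\rb^{1-b}\,\lb k_1\rb^{s}\lb k_2\rb^{s}\lb k_3\rb^{s}\,\lb\sigma_1\rb^{1/2}\lb\sigma_2\rb^{1/2}\lb\sigma_3\rb^{1/2}}, \]
summed over $k = k_1 + k_2 + k_3$ and integrated over $\tau = \tau_1 + \tau_2 + \tau_3$, with $\Phi = \alpha k^3 - k_1^3 - \alpha(k_2+k_3)^3$ the resonance function from the definition of $R_3$.

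Two algebraic inputs drive the argument. First, setting $m = k_2 + k_3 = k - k_1$, the denominator factors as $\Phi = \alpha k^3 - k_1^3 - \alpha(k-k_1)^3 = (\alpha - 1)\,k_1\,(k_1 - d_1 k)(k_1 - d_2 k)$, so $|\Phi|$ is small exactly when $k_1$ lies near one of $0$, $d_1 k$, or $d_2 k$. Second, the convolution constraints give the modulation identity $\sigma - \sigma_1 - \sigma_2 - \sigma_3 = -\Psi$, where $\Psi = \alpha k^3 - k_1^3 - k_2^3 - \alpha k_3^3$; hence $\max\{|\sigma|, |\sigma_1|, |\sigma_2|, |\sigma_3|\} \gtrsim |\Psi|$, which is the source of the smoothing gain. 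A central feature of this estimate, absent in the KdV setting, is that $\Phi \neq \Psi$: the frequency combination controlling the small denominator differs from the one supplying the modulation gain, because the three factors do not share a single dispersion relation.

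I would then split into a non-resonant region and a resonant region according to the size of $|\Phi|$. In the non-resonant region, where $k_1$ is bounded away from the roots of $\Phi$, the factorization gives a favorable lower bound on $|\Phi|$; I would discard $1/|\Phi|$ against this bound, extract the smoothing from the largest modulation, carry out the $\tau$-integrations with the standard calculus lemma (convergent since $b - \tfrac12 > 0$), and sum in the frequencies using $s > \tfrac12$ for $\ell^2$-summability. This region produces the constraints $s_1 - s < \tfrac12$ and $s_1 - s < s - \tfrac12$. In the resonant region $k_1 \approx d_i k$, I would invoke the type condition on $d_i$ in the form $|k_1 - d_i k| \gtrsim |k|^{-1 - \nu_{d_i}}$, so that $1/|\Phi| \lesssim |k|^{1 + \nu_{d_i}}/(|k_1|\,|k_1 - d_{3-i}k|)$, at a cost of powers of $|k|$ growing with $\nu_d$. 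Combining this loss with the modulation gain, and using that here two of the frequencies are comparable (so that both $\lb\cdot\rb^{-s}$ weights from the $u$-factors are available), yields the $\nu_d$-dependent constraints $s_1 - s < 2s - 1 - \nu_d$ and $s_1 - s < s + \tfrac12 - \nu_d$.

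The \emph{main obstacle} is precisely the resonant region together with the mismatch $\Phi \neq \Psi$: one must simultaneously exploit the diophantine non-resonance of $d_1, d_2$ (to lower-bound the small denominator) and the gain from the largest modulation (to recover derivatives), while carefully tracking which frequency carries the near-resonance and keeping both the $\tau$- and $k$-sums under control. When $\alpha = q^2/(3p(p-q)+q^2)$ the roots $d_1, d_2$ are rational, so exact resonances occur and are removed by the restriction $\sum^*$; near-resonances then cannot be beaten using diophantine smallness, which forces the weaker endpoint $s_1 - s \le \min\{\tfrac12 -,\, s - \tfrac12\}$ stated in the proposition.
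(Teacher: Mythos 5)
Your outline of the main term is sound and parallels the paper: the duality/Cauchy--Schwarz reduction, the factorization $\Phi = (\alpha-1)k_1(k_1-d_1k)(k_1-d_2k)$, the modulation identity $\sigma-\sigma_1-\sigma_2-\sigma_3 = -\Psi$ with $\Psi = \alpha k^3-k_1^3-k_2^3-\alpha k_3^3$, and the resonant/non-resonant split with diophantine lower bounds all appear there and yield the stated family of constraints. But there is a genuine gap: you never treat the diagonal set $k_1+k_2=0$ (equivalently $n:=k_1+k_2=0$, so $k_3=k$). This set is not removed by $\sum^*$, since $\Phi=(\alpha-1)k_1(k_1-d_1k)(k_1-d_2k)$ is generically nonzero on it; and on it $\Psi = \alpha k^3 - k_1^3 + k_1^3 - \alpha k^3 \equiv 0$, so your step ``extract the smoothing from the largest modulation'' produces nothing at all. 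Worse, a direct multiplier estimate fails there: for $|k_1|\ll|k|$ the kernel behaves like $(k-k_1)/\Phi \approx 1/\big((\alpha-1)d_1d_2\,k_1 k\big)$, and the Cauchy--Schwarz quantity
\begin{equation*}
\lb k\rb^{2+2s_1-2s}\sum_{k_1\neq 0}\frac{\lb k_1\rb^{-4s}|k-k_1|^2}{\Phi^2}\ \gtrsim\ \lb k\rb^{2s_1-2s},
\end{equation*}
which is bounded only for $s_1\leq s$, i.e.\ no smoothing whatsoever. This is exactly why the paper begins by decomposing $R_3 = \mathrm{I}+\mathrm{II}$ according to whether $k_1+k_2\neq 0$ or $=0$, and handles $\mathrm{II}$ by symmetrizing $k_1\mapsto -k_1$: the two paired kernels cancel at leading order in the regime $|k_1|\ll|k|$, the combined bracket being
\begin{equation*}
\frac{2(1-\alpha)k_1^4}{k_1^2(k_1-d_1k)(k_1-d_2k)(k_1+d_1k)(k_1+d_2k)} = O\big(k_1^2/k^4\big),
\end{equation*}
which is then summable and yields the constraints $s_1-s\leq 1$, $s_1-s\leq 2s-\tfrac12$, $s_1-s<2s-1-\nu_{d}$ for this piece. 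Without this cancellation your scheme breaks on the diagonal, so the decomposition plus pairing is a missing idea, not a detail.

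Two smaller points. First, you attribute the constraint $s_1-s<\tfrac12$ to the generic non-resonant region; in the paper it arises from one specific non-resonant subcase of the main term, namely $|k_1|,|n-k_1|\leq\epsilon|k|$ with $k_1,n\neq 0$, where one only gets $|\Psi|\gtrsim |n||k|^2\gtrsim |k|^2$ (not $|k|^3$), giving $s_1-s\leq \tfrac32-2b = \tfrac12-$. Second, you never say where the hypothesis $u\in\dot H^s$ enters: it is used precisely in that subcase (and in its analogues) to guarantee $k_1\neq 0$, i.e.\ $|k_1/k|\geq 1/|k|$, without which the lower bound on $\Psi$ and hence the case analysis would fail.
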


We will use these estimates, the embeddings $X^{s_1,b}_1, \; X^{s_1,b}_\alpha \hookrightarrow L^\infty_tH^{s_1}_x$ for $b > \frac12$, and the following standard lemma to complete the proof. Here $\eta$ is a smooth function supported on $[-2,2]$ with $\eta = 1$ on $[-1,1]$, and $\eta_\delta$ is defined by $\eta_\delta(t) = \eta(t/\delta)$.  

\begin{lem}[\cite{GTV}]
For $b \in (\frac12, 1]$, we have
\begin{align*}
\left\| \eta(t) \int_0^t e^{-(t-r)\p^3_x}F(r) \;\d r \right\|_{X^{s_1,b}_1} &\lesssim  \|F\|_{X^{s_1,b-1}_{1,\delta}} \\
\left\| \eta(t) \int_0^t e^{-\alpha(t-r)\p^3_x}F(r) \;\d r \right\|_{X^{s_1,b}_\alpha} &\lesssim \|F\|_{X^{s_1,b-1}_{\alpha,\delta}}. 
\end{align*}
\end{lem}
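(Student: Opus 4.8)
The plan is to conjugate out the Airy group so that the estimate collapses to a one–dimensional inequality in the time variable alone, and then to analyse the resulting antiderivative operator on the Fourier side in $\tau$. I will treat the $X^{s_1,b}_1$ inequality; the $X^{s_1,b}_\alpha$ version is identical after replacing the symbol $e^{itk^3}$ by $e^{i\alpha tk^3}$ throughout.

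First I would remove the group and the spatial weight. Writing $W(t)=e^{-t\partial_x^3}$, whose spatial symbol is $e^{itk^3}$, set $G_k(r)=e^{-ik^3 r}F_k(r)$, so that by a change of variables in $\tau$ one has $\|F\|_{X^{s_1,b-1}_1}=\|\langle k\rangle^{s_1}\langle\tau\rangle^{b-1}\widehat{G}(k,\tau)\|_{L^2_\tau\ell^2_k}$. Conjugating the Duhamel term by $W(-t)$ collapses the two exponentials,
\[
W(-t)\int_0^t W(t-r)F(r)\,\d r=\int_0^t G(r)\,\d r,
\]
and since $\eta$ is a function of $t$ only it commutes with the conjugation. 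Hence the left-hand side equals $\|\eta(t)\int_0^t G(r)\,\d r\|_{H^b_tH^{s_1}_x}$. Because the operator $g\mapsto\eta\int_0^t g$ acts diagonally in the spatial frequency $k$, applying the bound mode-by-mode and summing in $\ell^2_k$ (with the weight $\langle k\rangle^{s_1}$) reduces everything to the scalar temporal estimate
\[
\Big\|\eta(t)\int_0^t g(r)\,\d r\Big\|_{H^b_t}\lesssim\|g\|_{H^{b-1}_t},\qquad b\in(\tfrac12,1].
\]

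To prove this scalar bound I would pass to the time Fourier transform and write $\int_0^t g=\int\widehat g(\tau)\,\frac{e^{it\tau}-1}{i\tau}\,\d\tau$, splitting the integral into $|\tau|\le1$ and $|\tau|>1$. On $|\tau|\le1$ the Taylor expansion $\frac{e^{it\tau}-1}{i\tau}=\sum_{n\ge1}\frac{(i\tau)^{n-1}}{n!}t^n$ represents the contribution as $\sum_{n\ge1}c_n\,\eta(t)t^n$, with $|c_n|\lesssim\frac1{n!}\|g\|_{H^{b-1}}$ by Cauchy--Schwarz (using $b-1>-\tfrac12$, so $\langle\tau\rangle^{1-b}\in L^2(|\tau|\le1)$); since $\sum_n\frac1{n!}\|\eta(t)t^n\|_{H^b}<\infty$, this piece is controlled. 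On $|\tau|>1$ I would split $\frac{e^{it\tau}-1}{i\tau}$ into $\frac{e^{it\tau}}{i\tau}$ and $-\frac{1}{i\tau}$: the second gives $\eta(t)$ times the constant $\int_{|\tau|>1}\frac{\widehat g}{i\tau}\,\d\tau$, bounded by $\|g\|_{H^{b-1}}$ precisely because $\|\langle\tau\rangle^{1-b}/\tau\|_{L^2(|\tau|>1)}<\infty$ when $b>\tfrac12$; the first gives $\eta(t)h(t)$ with $\widehat h=\frac{\widehat g}{i\tau}\mathbf{1}_{|\tau|>1}$, and the elementary bound $\langle\tau\rangle^b/|\tau|\lesssim\langle\tau\rangle^{b-1}$ together with the boundedness of multiplication by the Schwartz cutoff $\eta$ on $H^b$ for $0\le b\le1$ yields $\|\eta h\|_{H^b}\lesssim\|g\|_{H^{b-1}}$.

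The main obstacle, and the only point where the hypothesis $b>\tfrac12$ is genuinely needed, is the high-frequency constant piece in the last step: integrability of $\langle\tau\rangle^{1-b}/\tau$ on $|\tau|>1$ fails exactly at $b=\tfrac12$, which is the endpoint where the inhomogeneous Bourgain-space estimate is known to break down. Finally, replacing the full $X^{s_1,b-1}_1$ norm on the right by the restricted norm $X^{s_1,b-1}_{1,\delta}$ is the routine extension-and-infimum bookkeeping of the $X^{s,b}$ local theory, for which I refer to the standard treatment in \cite{GTV}.
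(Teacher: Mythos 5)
Your proof is correct, and it is essentially the standard argument from the cited reference \cite{GTV}: the paper itself offers no proof of this lemma (it is quoted from Ginibre--Tsutsumi--Velo), and your reduction via conjugation by the free group to the scalar temporal estimate $\|\eta\int_0^t g\|_{H^b}\lesssim\|g\|_{H^{b-1}}$, followed by the low-frequency Taylor expansion and the high-frequency splitting of $\frac{e^{it\tau}-1}{i\tau}$, is exactly how that lemma is proved there, including the correct identification of the constant high-frequency piece as the sole point requiring $b>\frac12$.
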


Let $\delta$ be the existence time for the system from the local theory. Then for $t \in [-\delta/2, \delta/2]$, we have 
\begin{align*}
\left\|\int_0^t e^{-(t-r)\p^3_x}  R_1(u,v,v)(r)\;\d r \right\|_{H^{s_1}_x} 
\leq \; \left\| \eta_\delta(t)\int_0^t e^{-(t-r)\p^3_x}  R_1(u,v,v)(r)\;\d r \right\|_{L^\infty_tH^{s_1}_x} \\
\lesssim \;  \left\| \eta_\delta(t)\int_0^t e^{-(t-r)\p^3_x}  R_1(u,v,v)(r)\;\d r \right\|_{X^{s_1,b}_1} 
\lesssim \;  \| R_1(u,v,v) \|_{X^{s_1,b-1}_{1,\delta}} 
\lesssim \;  \|u\|_{X^{s,1/2}_{1,\delta}}\|v\|^2_{X^{s,1/2}_{\alpha,\delta}}. 
\end{align*}

Similarly, from the second equation we find
\begin{align*}
&\left\|\int_0^t e^{-\alpha(t-r)\p^3_x}[R_2(v,v,v)(r) + R_3(u,u,v)(r)]\;\d r \right\|_{H^{s_1}_x} \\
\leq \; &\left\| \eta_\delta(t) \int_0^t e^{-\alpha(t-r)\p^3_x}[R_2(,v,v,v)(r) + R_3(u,u,v)(r)]\;\d r \right\|_{L^\infty_tH^{s_1}_x} \\
\lesssim \;  &\left\| \eta_\delta(t) \int_0^t e^{-\alpha(t-r)\p^3_x}[R_2(v,v,v)(r) + R_3(u,u,v)(r)]\;\d r \right\|_{X^{s_1,b}_\alpha} \\
\lesssim \; & \|R_2(v,v,v) \|_{X^{s_1,b-1}_{\alpha,\delta}} + \|R_3(u,u,v) \|_{X^{s_1,b-1}_{\alpha,\delta}} 
\lesssim \; \| v\|^3_{X^{s,1/2}_{\alpha,\delta}} + \| u\|^2_{X^{s,1/2}_{1,\delta}}\| v\|_{X^{s,1/2}_{\alpha,\delta}} . 
\end{align*}
Thus, collecting these estimates, we have
\begin{align*}
\| u(t) - e^{-t\p^3_x}u_0 \|_{H^{s_1}_x} &\lesssim \; \| v(t) \|_{H^s_x}^2 + \| v(0)  \|_{H^s_x}^2 + \int_0^t \|v(r)\|_{H^s_x}^2 \;\d r  
+ \|u\|_{X^{s,1/2}_{1,\delta}}\|v\|^2_{X^{s,1/2}_{\alpha,\delta}} \\
\| v(t) - e^{-\alpha t\p^3_x}v_0 \|_{H^{s_1}_x} &\lesssim \; \| u(t) \|_{H^s_x} \| v(t)\|_{H^s_x} + \|u(0) \|_{H^s_x}\|v(0)\|_{H^s_x}  + \int_0^t \|u(r)\|_{H^s_x}\|v(r)\|_{H^s_x} \;\d r  \\
&+\| v\|^3_{X^{s,1/2}_{\alpha,\delta}} + \| u\|^2_{X^{s,1/2}_{1,\delta}}\| v\|_{X^{s,1/2}_{\alpha,\delta}}. 
\end{align*}
Combining the estimates for the two equations, we may write
\begin{align*}
&\| u(t) - e^{-t\p^3_x}u_0 \|_{H^{s_1}_x} + \| v(t) - e^{-\alpha t\p^3_x}v_0 \|_{H^{s_1}_x}  \lesssim  \; \big(\|u(0)\|_{H^s_x} + \| v(0) \|_{H^s_x} \big)^2 \\
&+\big(\|u(t)\|_{H^s_x} + \| v(t) \|_{H^s_x} \big)^2 
 + \int_0^t \Big(\|u(r)\|_{H^s_x} + \| v(r) \|_{H^s_x} \Big)^2 \;\d r  + \Big(\| u\|_{X^{s,1/2}_{1,\delta}} + \| v\|_{X^{s,1/2}_{\alpha,\delta}} \Big)^3.
\end{align*}
 
We demonstrate the polynomial growth bound and then the continuity. Fix $T$ large. For $t \leq T$, we have the bound 
 \[ \|u(t)\|_{H^s_x} + \|v(t)\|_{H^s_x} \lesssim (1 + |t|)^{g(s)} \lesssim T^{g(s)}. \]
 Then for $\delta \sim T^{-3 g(s) }$ and any $j \leq T/\delta \sim T ^{1+3 g(s)}$, we have 
 \[ \|u(j \delta) - e^{-t\p^3_x}u((j-1)\delta) \|_{H^{s_1}_x} +  \|v(j \delta) - e^{-\alpha t \p^3_x } v((j-1)\delta) \|_{H^{s_1}_x} \lesssim T^{3g(s)}, \]
 using the local theory bound
 \[ \|u\|_{X^{s, 1/2}_{1, [(j-1)\delta, j \delta]} } + \|v\|_{X^{s, 1/2}_{\alpha, [(j-1)\delta, j \delta]} } \lesssim \|u((j-1)\delta)\|_{H^s} + \|v((j-1)\delta)\|_{H^s} \lesssim T^{g(s)}. \]
Now let $J = T/\delta \sim T^{1 + 3g(s)}$ and write
\begin{align*}
\|u(J\delta) - e^{-J\delta \p_x^3}u_0\|_{H^{s_1}} &\leq \sum_{j=1}^J \|u(j \delta) - e^{-t\p^3_x}u((j-1)\delta) \|_{H^{s_1}_x}  \lesssim T^{1 + 6g(s)}. 
\end{align*}
The corresponding estimate for $v$ completes the proof of the growth bound. 

To prove continuity, write
\begin{align*}
u_k(t) - u_k(t') &= (e^{ik^3t} - e^{ik^3t'})\big[u_k(0) + B_1(v,v)_k(0)\big] 
+ B_1(v,v)_k(t') - B_1(v,v)_k(t) \\
&+ \int_0^t e^{ik^3(t-r)} R_1(u,v,v)_k(r) \;\d r - \int_0^{t'} e^{ik^3(t'-r)} R_1(u,v,v)_k(r) \;\d r  \\
&+ \int_0^t e^{ik^3(t-r)} \rho_1(v,v)_k(r) \;\d r  - \int_0^{t'} e^{ik^3(t'-r)} \rho_1(v,v)_k(r) \;\d r.
\end{align*}
The continuity follows by applying the estimates stated previously along with the continuity of $u$ in $H^s$; see \cite{ET1}. Continuity of $v$ is proved in the same way.

%%%%%%%%%%%%%%%%%%%%
%% ATTRACTOR PROOF
%%%%%%%%%%%%%%%%%%%%
\section{Proof of Existence of Global Attractors}

We will consider the forced and damped version \eqref{eq:MBforced} of the Majda-Biello system with $\gamma, \delta > 0$. For simplicity, take $\gamma = \delta$; minor modifications to the calculations extend them to the general case. The first step is to obtain bounds on the $H^1$ norms for the dissipative system. This will imply the existence of an absorbing set (see Definition \ref{abset}). Recall the conservation laws \eqref{eq:conservationlaws} for the original Majda-Biello system.
To get a bound in the dissipative case, we study $E_3$ and $E_4$ in the presence of dissipation.
%%%%%%%%%%%%%%%%%%%
%% H1 BOUND
%%%%%%%%%%%%%%%%%%%
\begin{lem} \label{H1bound}
Solutions to \eqref{eq:MBforced} satisfy
\begin{align*}
\|u(t)\|_{H^1} + \|v(t)\|_{H^1} \leq C = C(\|u_0\|_{H^1}, \|v_0\|_{H^1}, \|f\|_{H^1} + \|g\|_{H^1}, \gamma, \alpha). 
\end{align*}
\end{lem}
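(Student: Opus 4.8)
The plan is to derive a priori bounds on the $H^1$ norm of solutions to the forced-damped system \eqref{eq:MBforced} by tracking the time evolution of the conserved quantities $E_3$ and $E_4$ in the presence of damping and forcing. For the undamped, unforced system these are exactly conserved; with the terms $\gamma u$, $\delta v$, $f$, and $g$ present, differentiating them in time will produce a damping contribution (negative, proportional to $-\gamma$ times a norm) together with a forcing contribution that can be absorbed by Cauchy--Schwarz and Young's inequality. The goal is a differential inequality of the Gronwall type, $\frac{d}{dt}\mathcal{E} \leq -c\,\mathcal{E} + C(\|f\|,\|g\|)$, from which a uniform-in-time bound follows. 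Since we have taken $\gamma = \delta$ for simplicity, the bookkeeping is cleaner.

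First I would control the $L^2$ level via $E_3 = \int u^2 + v^2\,\d x$. Differentiating and using the equations, the dispersive terms $u_{xxx}$ and $\alpha v_{xxx}$ integrate away (they are conservative), and the nonlinear terms $\frac12(v^2)_x$ and $(uv)_x$ combine to cancel after integration by parts exactly as in the proof that $E_3$ is conserved. What survives is $\frac{d}{dt}E_3 = -2\gamma\int(u^2+v^2)\,\d x + 2\int(fu + gv)\,\d x$. Bounding the forcing term by $2\|f\|_{L^2}\|u\|_{L^2} + 2\|g\|_{L^2}\|v\|_{L^2} \leq \gamma\,E_3 + \frac{1}{\gamma}(\|f\|_{L^2}^2 + \|g\|_{L^2}^2)$ yields $\frac{d}{dt}E_3 \leq -\gamma E_3 + \frac{1}{\gamma}(\|f\|_{L^2}^2+\|g\|_{L^2}^2)$, hence a uniform $L^2$ bound by Gronwall.

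Next I would pass to the $H^1$ level using the Hamiltonian $E_4 = \int u_x^2 + \alpha v_x^2 - uv^2\,\d x$. The same computation, differentiating $E_4$ along the damped-forced flow, produces a damping term of the form $-2\gamma\int(u_x^2 + \alpha v_x^2)\,\d x$ plus lower-order and forcing contributions arising both from the quadratic gradient part and from the cubic term $-uv^2$ (the latter also generating terms upon which the damping acts, as well as forcing terms multiplying $f$ and $g$). The main obstacle here is that $E_4$ is not a norm and is not sign-definite: the cubic term $\int uv^2$ must be dominated by the quadratic gradient energy together with the already-established $L^2$ bound. I expect to handle this by interpolation and Sobolev embedding — bounding $\int uv^2$ via $\|u\|_{L^\infty}\|v\|_{L^2}^2 \lesssim \|u\|_{H^1}\|v\|_{L^2}^2$ or a Gagliardo--Nirenberg estimate — so that the cubic term is controlled by a small multiple of the gradient energy plus a function of the $L^2$ norm, which is already bounded uniformly. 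This lets me build a coercive modified energy $\mathcal{E} \approx E_4 + (\text{const})\,E_3$ that is comparable to $\|u\|_{H^1}^2 + \|v\|_{H^1}^2$ and satisfies the desired Gronwall inequality.

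The hard part will be managing the cubic coupling term in $E_4$ carefully enough that its contribution to $\frac{d}{dt}E_4$, after the forcing and damping terms are accounted for, can be absorbed into the dissipation rather than overwhelming it — this requires using the $L^2$ bound from the first step as an input and choosing the interpolation exponents so that the gradient energy appears with a coefficient strictly smaller than the damping coefficient. Once that differential inequality is in hand, Gronwall gives the stated uniform bound $\|u(t)\|_{H^1} + \|v(t)\|_{H^1} \leq C$ with $C$ depending only on $\|u_0\|_{H^1}$, $\|v_0\|_{H^1}$, $\|f\|_{H^1}+\|g\|_{H^1}$, $\gamma$, and $\alpha$, as claimed.
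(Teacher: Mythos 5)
Your proposal is correct, and its skeleton is the same as the paper's: bound the $L^2$ level first via $E_3$, then use the Hamiltonian $E_4$ with the damping producing (up to the cubic term) an exact factor of $-2\gamma$, controlling $\int uv^2$ by $\|u\|_{H^1}\|v\|_{L^2}^2$ through the embedding $H^1\hookrightarrow L^\infty$ and the already-established $L^2$ bound. Where you genuinely depart from the paper is the endgame. The paper works with $E_4$ alone: it derives the identity $\partial_t E_4 + 2\gamma E_4 = \gamma\int uv^2\,\d x + (\text{forcing terms})$, bounds the right side by $C + \tilde{C}\bigl(\|u_x\|_{L^2}+\|v_x\|_{L^2}\bigr)$, shows $\|u_x\|_{L^2}+\|v_x\|_{L^2}\lesssim \sqrt{|E_4|}+C$ using the $L^2$ bounds, and then closes with a sublinear differential inequality and a barrier argument (if $E_4$ first reaches a large value $M$, then $M\leq C+\tilde{C}\sqrt{M}$, a contradiction). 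You instead repair the non-coercivity of $E_4$ up front by forming the modified energy $\mathcal{E}=E_4+K E_3$, absorb the cubic and forcing terms into the dissipation by Young's inequality, and run a linear Gronwall inequality $\frac{d}{dt}\mathcal{E}\leq -c\,\mathcal{E}+C$. Both routes are valid; yours is arguably cleaner and yields slightly more (exponential relaxation of $\mathcal{E}$ toward a fixed ball, which is convenient when constructing the absorbing set), at the cost of having to verify the coercivity of $\mathcal{E}$ — which does go through, since $\bigl|\int uv^2\,\d x\bigr|\leq C\|u\|_{H^1}\|v\|_{L^2}^2$ and $\|v\|_{L^2}$ is uniformly bounded by the first step, so the cubic term is dominated by $\tfrac12\|u\|_{H^1}^2 + C$ for $K$ large. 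One small bookkeeping point to keep straight when writing this up: the damping acting on the cubic part of $E_4$ produces $+3\gamma\int uv^2\,\d x$, so after adding $2\gamma E_4$ the leftover is exactly $\gamma\int uv^2\,\d x$ (as in the paper's identity); your sketch treats this term correctly but should display the coefficient explicitly, since its sign and size are what make the absorption step nontrivial.
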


\begin{proof} In the following manipulations, $C$ and $\tilde{C}$ are positive constants whose value may change from one side of an inequality to the other.  Recall that $E_3 = \int u^2 + v^2 \; \d x$. Then  
\begin{align*}
\p_t E_3 + 2 \gamma E_3 =  2\int fu + gv \; \d x \leq 2\| f\|_{L^2}\|u\|_{L^2} + 2\|g\|_{L^2}\|v\|_{L^2} \leq 4\big(\|f\|_{L^2}+ \|g\|_{L^2}\big)\sqrt{E_3}.
\end{align*}
Let $F_3 = e^{2\gamma t} E_3$. The above inequality gives $\p_t F_3 \leq 4e^{\gamma t} \big(\|f\|_{L^2} + \|g\|_{L^2} \big) \sqrt{F_3}$, or
\[ \p_t \sqrt{F_3} \leq 2e^{\gamma t} \big(\|f\|_{L^2} + \|g\|_{L^2}\big). \] 
Integrating this inequality and rewriting $F_3$ in terms of $u$ and $v$ norms gives
\begin{align*}
 \sqrt{\|u(t)\|_{L^2}^2 + \|v(t)\|_{L^2}^2 } &\leq e^{-\gamma t} \sqrt{\|u_0\|_{L^2}^2 + \|v_0\|_{L^2}^2} + 2\frac{\|f \|_{L^2} + \|g\|_{L^2}}{\gamma} (1 - e^{-\gamma t})  \\
 &\leq C = C(\|u_0\|_{L^2}, \|v_0\|_{L^2}, (\|f\|_{L^2} + \|g\|_{L^2}), \gamma). 
 \end{align*}
Thus the $L^2$ norms of the $u$ and $v$ are bounded in the dissipative case. Next consider $E_4 = \int u_x^2 + \alpha v_x^2 - uv^2 \; \d x$. First notice that $E_4$ is bounded below due to the bound on $\| v\|_{L^2}$ and the embedding $H^1 \hookrightarrow L^\infty$. To get an upper bound, use the embedding again to write
\begin{align*} 
\p_t E_4 + &2 \gamma E_4 = 2 \int f_x u_x + g_xv_x  \; \d x - \int  fv^2 + guv \; \d x + \gamma \int uv^2 \; \d x \\
& \lesssim \|f\|_{H^1} \|u_x\|_{L^2} + \|g\|_{H^1} \|v_x\|_{L^2} + \|f\|_{H^1} \|v\|_{L^2}^2  + \|g\|_{H^1}\|u\|_{L^2} \|v\|_{L^2} + \|u\|_{H^1} \|v\|_{L^2}^2   \\
&\leq C + \tilde{C}\big( \|u_x\|_{L^2} + \|v_x\|_{L^2}\big). 
\end{align*}
The constants in second inequality depend on the bounds on $\|u\|_{L^2}$ and $\|v\|_{L^2}$ and on the value of $\|f\|_{H^1}$. Now note that 
\begin{align*} 
\|u_x\|_{L^2}^2 + \alpha \|v_x\|_{L^2}^2  = E_4+ \int uv^2 \; \d x  &\leq E_4 + C\| v\|_{L^2}^2 \|u \|_{H^1} \\
&\leq E_4 + C(\|u_x\|_{L^2}^2 + C)^{1/2} 
\leq (E_4 + C) + \tilde{C}\|u_x\|_2. 
\end{align*}
The second inequality uses the $L^2$ bounds on $u$ and $v$. Then we have 
\[ \|v_x\|_{L^2} \lesssim \sqrt{\big(\|u_x\|_{L^2} - {\tilde{C}}/{2}\big)^2 + \alpha \|v_x\|_{L^2}^2} \leq \sqrt{ E_4 + C + {\tilde{C}^2}/{4}} \lesssim \sqrt{|E_4|} + C, \]
and similarly
\[ \|u_x\|_{L^2} - \tilde{C}/2 \lesssim \sqrt{\big(\|u_x\|_{L^2} - {\tilde{C}}/{2}\big)^2 + \alpha \|v_x\|_{L^2}^2} \lesssim \sqrt{|E_4|} + C. \]
This implies that $\|u_x\|_{L^2} + \|v_x\|_{L^2} \lesssim \sqrt{|E_4|} + C$. Using this bound with the change of variables $F_4 = e^{2\gamma t} E_4$, we have 
\begin{align*}
\p_t F_4 \leq e^{\gamma t} \left[ Ce^{\gamma t} + \tilde{C}\sqrt{|F_4|}\right]. 
\end{align*}
Then 
\begin{align*}
E_4(t) &\leq e^{-2\gamma t}E_4(0) + C \frac{1 - e^{-2\gamma t}}{2 \gamma} + \tilde{C} \int_0^t e^{-2 \gamma (t-s)} \sqrt{|E_4(s)|} \; ds \\
&\leq  E_4(0) + C + \tilde{C} \| \sqrt{E_4} \|_{L^\infty([0,t])}.
\end{align*}

Now take $M \gg 1$, and suppose $E_4$ attains the value $M$. Let $t$ be the first time the value is attained. Then  $M \leq C + \tilde{C} \sqrt{M}$,
which is impossible for sufficiently large $M$. Thus $E_4$ is bounded above. 
\end{proof}

With this lemma, we conclude that solutions of the dissipative Majda-Biello system remain in a ball, say $\mathcal{B}_0$, in the space $H^1 \times H^1$. We now show that the $\omega$-limit set of the ball, 
\[ \omega(\mathcal{B}_0)  = \bigcap_{s \geq 0} \overline{\bigcup_{t \geq  s} U(t)\mathcal{B}_0}, \]
is a global attractor in the sense of Definition \ref{ga}. Lemma \ref{H1bound} gives the existence of an absorbing set for \eqref{eq:MBforced}, so by Theorem \ref{asymthrm} we only need prove asymptotic compactness of $U(t)$. To do so, we use the following general smoothing estimate. Notice that it gives a bound on the nonlinear evolution minus a correction involving the resonant terms $\rho_i$. In Theorem~\ref{GAthrm}, we consider only the full-measure set of $\alpha$ such that $\rho_1 = \rho_2 = 0$. In this situation, the correction terms vanish. 

\begin{thm} \label{MBforcedsmoothing}
Consider the solution of \eqref{eq:MBforced} with initial data $(u_0, v_0) \in \dot{H}^1 \times H^1$. Then for any $a < \min\{1-\nu_c, 1 - \nu_d\}$, we have
\begin{align*}
\left\| u(t) - e^{-t\p_x^3 - \gamma t} u_0  - \int_0^t  e^{(-\p_x^3 - \gamma )(t-r)} \rho_1(v,v)(r)\d r \right\|_{H^{1+a}} \\
+ \left\| v(t) - e^{-\alpha t\p_x^3 - \gamma t} v_0  - \int_0^t  e^{(-\alpha\p_x^3 - \gamma )(t-r)}\rho_2(u,v)(r)\d r \right\|_{H^{1+a}} \\
\leq C(a, \gamma, \|f\|_{H^1}, \|g\|_{H^1}, \|u_0\|_{H^1}, \|v_0\|_{H^1}). 
\end{align*}
\end{thm}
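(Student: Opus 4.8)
The plan is to adapt the differentiation-by-parts machinery already developed for Theorem~\ref{smoothingthrm} to the forced and damped equation \eqref{eq:MBforced}, tracking the extra terms introduced by the damping $\gamma u$, $\delta v$ and the forcing $f$, $g$. First I would run the normal-form transformation of Proposition~\ref{dbp} on the dissipative system. On the Fourier side the linear part now becomes $\p_t u_k = (ik^3 - \gamma)u_k + \ldots$, so after the change of variables $m_k = e^{-ik^3 t + \gamma t}u_k$ (and analogously for $n_k$) the damping merely contributes a real exponential factor $e^{\gamma t}$ that is harmless in the differentiation by parts: it does not alter the resonance denominators $k^3 - \alpha k_1^3 - \alpha k_2^3$ or the phases, and so the boundary terms $B_1, B_2$ and the trilinear correction terms $R_1, R_2, R_3$ carry over with the same multipliers. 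The forcing terms $f$ and $g$, being in $H^1$ with mean zero and \emph{time-independent}, appear as lower-order source terms whose contribution to the $H^{1+a}$ norm is controlled directly by $\|f\|_{H^1}$, $\|g\|_{H^1}$ once integrated against the decaying kernel $e^{(-\p_x^3 - \gamma)(t-r)}$.

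The second step is to set up the Duhamel-type representation analogous to the one used to prove Theorem~\ref{smoothingthrm}: after integrating the transformed system from $0$ to $t$, the quantities on the left-hand side of the statement — namely $u$ minus the damped linear evolution minus the $\rho_1$ resonant correction, and similarly for $v$ — are expressed as a sum of the boundary terms $B_1(v,v)$, $B_2(u,v)$ (evaluated at times $t$ and $0$, weighted by $e^{\gamma t}$ factors) plus Duhamel integrals of the trilinear terms $R_1, R_2, R_3$ plus Duhamel integrals of the forcing. I would then invoke Propositions~\ref{B1}, \ref{B2}, \ref{R1}, \ref{R2}, \ref{R3} with $s = 1$ and $s_1 = 1+a$; the hypothesis $a < \min\{1-\nu_c, 1-\nu_d\}$ is exactly what is needed so that $s_1 - s = a$ falls below the thresholds $\min\{1, s-\nu_c\}$, $\min\{1,s-\nu_d\}$, $\min\{1, 2s-1-\nu_c, \ldots\}$ appearing in those propositions when $s=1$ (note $2s - 1 = 1$ at $s=1$, so the binding constraints reduce to $1 - \nu_c$ and $1 - \nu_d$). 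The presence of $e^{\gamma t}$ inside the damped Duhamel kernel actually \emph{helps}, since integrating $e^{-\gamma(t-r)}$ against a bounded integrand produces a factor $\gamma^{-1}$ rather than a factor growing in $t$.

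The crucial point — and the reason the bound is uniform in $t$ rather than growing polynomially as in Theorem~\ref{smoothingthrm} — is that the damping replaces the free-integration growth by exponential decay. Whereas the undamped estimate accumulated a factor $\int_0^t$ producing $T$-dependence, here each Duhamel integral $\int_0^t e^{-\gamma(t-r)} \|R_i(r)\| \, dr$ is bounded by $\gamma^{-1} \sup_r \|R_i(r)\|$, and the trilinear norms are controlled on unit-length time intervals by the local $X^{s,1/2}$ bounds, which in turn are controlled by the $H^1$ norms. By Lemma~\ref{H1bound} those $H^1$ norms are bounded \emph{uniformly in time} by a constant depending only on $\gamma$, $\alpha$, $\|f\|_{H^1}$, $\|g\|_{H^1}$, $\|u_0\|_{H^1}$, $\|v_0\|_{H^1}$. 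Summing the geometric series over unit time-steps (the exponential weight $e^{-\gamma(t-j)}$ makes the sum converge) yields the claimed time-independent bound.

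I expect the main obstacle to be verifying carefully that the trilinear $X^{s,b}$ estimates of Propositions~\ref{R1}--\ref{R3}, which are local-in-time, can be assembled across the infinite time interval $[0,t]$ with a genuinely uniform constant. This requires partitioning $[0,\infty)$ into unit intervals, applying the local smoothing estimate on each using the restricted norms $X^{s,1/2}_{1,\delta}$, $X^{s,1/2}_{\alpha,\delta}$, controlling each local piece by the time-uniform $H^1$ bound from Lemma~\ref{H1bound}, and then exploiting the exponential damping factor $e^{-\gamma(t-r)}$ to sum the contributions into a convergent series. One must also confirm that the forced local well-posedness theory (cited via the Bourgain argument in \cite{Oh} and Section~2 of \cite{ET3}) supplies local $X^{s,1/2}$ bounds controlled by the $H^1$ data together with $\|f\|_{H^1}$, $\|g\|_{H^1}$, so that the forcing contribution is absorbed into the final constant. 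The $\rho_i$ corrections are subtracted off precisely because, unlike $B_i$ and $R_i$, they are not smoothing (they only satisfy the weaker bound of Proposition~\ref{Rho}) and would otherwise obstruct the gain of $a$ derivatives.
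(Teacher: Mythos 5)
Your proposal follows essentially the same route as the paper: the change of variables $m_k = e^{-ik^3t+\gamma t}u_k$, $n_k = e^{-i\alpha k^3t+\gamma t}v_k$, differentiation by parts yielding the same $B_j$, $R_j$, $\rho_j$ terms plus forcing contributions, the estimates of Propositions~\ref{B1}--\ref{R3} at $s=1$, $s_1 = 1+a$ together with Lemma~\ref{H1bound}, and finally a decomposition of $[0,t]$ into local-existence-length intervals with the damping factor $e^{-\gamma(t-r)}$ making the sum uniformly bounded (the paper delegates this last step to Section~6 of \cite{ET2}, which you sketch directly). The only detail you gloss is that the forcing enters not only as the direct sources $f_k$, $g_k$ (handled via $\bigl\| \int_0^t e^{(ik^3-\gamma)(t-s)} f_k \, \d s \bigr\|_{H^{s+a}} \lesssim \|f\|_{H^{s-2}}$) but also through the bilinear cross terms $B_1(g,v)$, $B_2(f,v)$, $B_2(u,g)$ produced when the time derivative hits the product in the differentiation by parts; these are absorbed by the same Propositions~\ref{B1} and \ref{B2}, so no new idea is needed.
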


\begin{proof}
Taking the  Fourier transform of \eqref{eq:MBforced} yields
\[
\begin{cases}
\displaystyle{\partial_t u_k - i k^3 u_k + \gamma  u_k+ \frac{ik}2 \sum_{k_1 + k_2 = k} v_{k_1}v_{k_2} = f_k} \\
\displaystyle{\partial_t v_k - i \alpha k^3 v_k +  \gamma  v_k + ik \sum_{k_1 + k_2 = k} u_{k_1}v_{k_2} = g_k}.
\end{cases} 
\]

Change variables by setting $m_k = e^{-ik^3t+\gamma t} u_k$ and             $n_k = e^{-i\alpha k^3 t + \gamma t} v_k$, with $p_k(t) = e^{-ik^3t+\gamma t} f_k$ and $q_k(t) = e^{-i\alpha k^3 t + \gamma t} g_k$. After the change of variables, the system is
\[
\begin{cases}
\displaystyle{\partial_t m_k = -  \frac{ik}2 \sum_{k_1 + k_2 = k} e^{-it(k^3 - \alpha k_1^3 - \alpha k_2^3)}n_{k_1}n_{k_2}}+ p_k \\
\displaystyle{\partial_t n_k =  -  ik \sum_{k_1 + k_2 = k} e^{-it(\alpha k^3 - k_1^3 - \alpha k_2^3)}m_{k_1}n_{k_2}} + q_k.
\end{cases} 
\]

Then differentiating by parts as before gives the equivalent formulation
\[
\begin{cases}
\displaystyle{\partial_t\Big[e^{-ik^3t+\gamma t}\big(u_k + B_1(v,v)_k\big)\Big] = e^{-ik^3t+\gamma t}\Big[ \rho_1(v,v)_k + R_1(u,v)_k + B_1(g,v)_k+ f_k \Big]}\\
\displaystyle{\partial_t\Big[e^{-i\alpha k^3 t + \gamma  t}\big(v_k + B_2(u,v)_k\big)\Big]  } = \\
\qquad {e^{-i\alpha k^3 t +  \gamma  t}\Big[\rho_2(u,v)_k +  R_2(v,v,v)_k + R_3(u,u,v)_k +B_2(f,v)_k + B_2(g,u)_k+ g_k\Big]},
\end{cases} 
\]
where $\rho_j$, $B_j$, and $R_j$ are defined as in Proposition~\ref{dbp}. Integrating from $0$ to $t$ yields the equations
\begin{align*}
u_k(t) &- e^{ik^3t - \gamma t}u_k(0) 
= -B_1(v,v)_k + e^{ik^3 t -  \gamma  t}B_1(v_0,v_0) \\
 &+ \int_0^t e^{(ik^3 -  \gamma )(t-s)}\Big[\rho_1(v,v)_k + f_k  + R_1(u,v)_k + B_1(v,g)_k\Big] \d s \\
v_k(t) &- e^{-\alpha k^3 t - \gamma t}v_k(0) 
 = - B_2(u,v)_k(t) + e^{i\alpha k^3 t- \gamma t} B_2(u_0,v_0) \\
+ \int_0^t &e^{ (i\alpha k^3 - \gamma )(t-s)}\Big[\rho_2(u,v)_k + B_2(f,v)_k + B_2(u,g)_{k} + R_2(v,v,v)_k + R_3(u,u,v)_k + g_k\Big] \d s. 
\end{align*}
Note that 
\[ \left \| \int_0^ t e^{(ik^3 - \gamma)(t-s)}f_k \; \d s \right\|_{H^{s+a}}  = \left \| \frac{e^{(ik^3 - \gamma)t} - 1}{ik^3 - \gamma} f_k \right\|_{H^{s+a}} \lesssim \|f\|_{H^{s-2}}. \]
This, the corresponding estimate for $e^{(i\alpha k^3 - \gamma)(t-s)}g_k$, the estimates used for the previous smoothing result, and Lemma \ref{H1bound} give the following estimates for $t < \delta$, where $\delta$ is the existence time from the local theory:  
\begin{align*}
\left\| u(t) - e^{-t\p_x^3 - \gamma t} u_0  - \int_0^t  e^{(-\p_x^3 - \gamma )(t-r)} \rho_1(v,v)(r)\d r \right\|_{H^{1+a}} \\
\leq C\Big(a, \gamma, \|f\|_{H^1}, \|g\|_{H^1}, \|u_0\|_{H^1}, \|v_0\|_{H^1}\Big) \\
 \left\| v(t) - e^{-\alpha t\p_x^3 - \gamma t} v_0  - \int_0^t  e^{(-\alpha\p_x^3 - \gamma )(t-r)}\rho_2(u,v)(r)\d r \right\|_{H^{1+a}} 
\\ \leq C\Big(a, \gamma, \|f\|_{H^1}, \|g\|_{H^1}, \|u_0\|_{H^1}, \|v_0\|_{H^1}\Big). 
\end{align*}
This bound extends to large times by breaking the time interval down into $\delta$-length pieces. Due to the dissipation, the norm over the short intervals decays over time so that the sum remains uniformly bounded. For details of the argument, see Section 6 in \cite{ET2}.
\end{proof} 
We now show that $U_t$ is asymptotically compact, i.e., for any bounded sequence $\big\{(u_{0,k}, v_{0,k})\big\}$ in $\dot{H}^1 \times H^1$ and sequence of times $t_k \to \infty$, the sequence $\big\{U(t_k)(u_{0,k},v_{0,k})\big\}_k$ has a convergent subsequence in $\dot{H}^1 \times H^1$. It suffices to consider sequences $\big\{(u_{0,k}, v_{0,k})\big\}$ which lie within the absorbing set $\mathcal{B}_0$. By Theorem \ref{MBforcedsmoothing}, for any $\alpha$ such that the resonant terms $\rho_1$ and $\rho_2$ are zero (i.e. $c_i, d_i \notin \Q$), we have 
\[ U_{t_k}(u_{0,k}, v_{0,k})  = (e^{-t_k \p^3_x - \gamma t_k} u_{0,k}, e^{-\alpha t_k \p^3_x - \gamma t_k} v_{0,k}) + N_{t_k}(u_{0,k}, v_{0,k}), \]
where $N_{t_k}(u_{0,k},v_{0,k})$ is in a ball in $H^{1+a} \times H^{1+a}$. Note we can take $a = \frac12 - $ for almost every $\alpha$. In the following, we assume $a = \frac12 -$. 

By Rellich's theorem, there is a subsequence of $\big\{ N_{t_k}(u_{0,k},v_{0,k})\big\}$ which converges in $H^1 \times H^1$. Furthermore
\begin{align*}
\| e^{-t_k \p^3_x - \gamma t_k} u_{0,n} \|_{H^1_x} + \|e^{-\alpha t_k \p^3_x - \gamma t_k} v_{0,n}\|_{H^1_x} \lesssim e^{-\gamma t_k}\Big(\|u_{0,n} \|_{H^1_x} + \|v_{0,n}\|_{H^1_x} \Big) \lesssim e^{-\gamma t_k}
\end{align*}
converges to zero uniformly as $k \to \infty$. Thus $U_{t_k}(u_{0,k}, v_{0,k})$ has a convergent subsequence and $U_t$ is asymptotically compact.  

To show that the attractor is compact in $H^{1+a} \times H^{1+a}$, it suffices, by Rellich's theorem, to show that it is is bounded in $H^{1 + a + \epsilon} \times H^{1 + a + \epsilon}$ for some $\epsilon >0$. To do this, choose $\epsilon>0$ small so that the nonlinear part of the solution lies in $H^{1 + a + \epsilon} \times H^{1 + a + \epsilon}$, e.g. take $\epsilon = (\frac12 - a)/2$. We show that the attractor is contained in a closed ball, say $\mathcal{B}_\epsilon$, in this space. 

Define $V_\tau = \overline{\cup_{t \geq \tau} U_t\mathcal{B}_0}$ so that the attractor is 
\[ \bigcap_{\tau \geq 0} \overline{ \bigcup_{t \geq \tau} U_t\mathcal{B}_0} = \bigcap_{\tau \geq 0} V_\tau. \]
Using the smoothing result again, elements in $V_\tau$ can be broken into two pieces -- the linear evolution which is converging uniformly to zero in $H^1$ by the argument above, and the nonlinear evolution which lives in some ball in $H^{1 + a + \epsilon} \times H^{1 + a + \epsilon}$. 

Thus as a subset of $\dot{H}^1 \times H^1$, the set $V_\tau$ is contained in a $\delta_\tau$-neighborhood $N_\tau$ of a ball $\mathcal{B}_{\epsilon}$ in $H^{1 + a + \epsilon} \times H^{1 + a + \epsilon}$. The uniform convergence of the linear parts to zero implies that $\delta_\tau \to 0$ as $\tau \to \infty$. Therefore the attractor is inside $\mathcal{B}_\epsilon$:
\[ \bigcap_{\tau \geq 0} V_\tau \subset \bigcap_{\tau \geq 0} N_\tau = \mathcal{B}_\epsilon. \]

%%%%%%%%%%%%%%%%%%%
%% TRIVIAL ATTRACTOR
%%%%%%%%%%%%%%%%%%%

\section{Trivial Attractor for $\gamma$, $\delta$ Large}

In this section, we show that when the damping is large relative to the forcing terms in the dissipative system \eqref{eq:MBforced}, the global attractor consists of a single function, namely the solution to the time-independent system. We focus on the $\alpha \neq 1$ case with a global attractor in $\dot{H}^1 \times H^1$, noting along the way where the argument differs for $\alpha = 1$ and the $\dot{L}^2 \times \dot{L}^2$ attractor. 

Consider the stationary version of the forced and weakly damped Majda-Biello system:
\begin{equation} \label{eq:MBstat}
\begin{cases}p_{xxx} + \gamma p + qq_x = f \\ \alpha q_{xxx} + \delta q  + (pq)_x = g.\end{cases}
\end{equation}
We will take $\gamma = \delta$ to simplify the notation; the arguments can be applied to the general case by replacing $\gamma$ by $\min\{\gamma,\delta\}$ in the estimates. The first step is to demonstrate the existence of a solution to \eqref{eq:MBstat} under certain conditions on $\gamma$, $f$, and $g$. 

\begin{prop}
If $\|f\|_{H^1} \ll \alpha^{1/3} \gamma^{4/3}$ and $\|g\|_{H^1} \ll \alpha^{1/2}\gamma^{4/3}$, then \eqref{eq:MBstat} has a unique solution on a ball in $H^2(\T)$. The same statement holds if $\|f\|_{L^2} \ll \alpha^{1/3}\gamma$ and $\|g\|_{L^2} \ll \alpha^{5/6} \gamma$. 
\end{prop}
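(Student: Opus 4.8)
The plan is to recast the stationary system \eqref{eq:MBstat} as a fixed-point equation and close it with the contraction mapping principle. Since $\gamma>0$, the Fourier multipliers $\gamma - ik^3$ and $\gamma-i\alpha k^3$ of the operators $\p_x^3+\gamma$ and $\alpha\p_x^3+\gamma$ never vanish, so both operators are boundedly invertible on each $H^s(\T)$. Writing the nonlinearities as $qq_x=\tfrac12\p_x(q^2)$ and $(pq)_x=\p_x(pq)$ and inverting, \eqref{eq:MBstat} becomes $(p,q)=\Phi(p,q)$, where
\[ \Phi(p,q)=\Big((\p_x^3+\gamma)^{-1}\big(f-\tfrac12\p_x(q^2)\big),\;(\alpha\p_x^3+\gamma)^{-1}\big(g-\p_x(pq)\big)\Big). \]
I would then show that $\Phi$ is a contracting self-map of a suitable ball in $H^2(\T)\times H^2(\T)$, so that Banach's theorem supplies the unique solution.

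The analytic core is a set of multiplier estimates that quantify both the smoothing and the smallness coming from the damping. Because $|\gamma-ik^3|\geq\max\{\gamma,|k|^3\}$, optimizing at the transition frequency $|k|\sim\gamma^{1/3}$ gives $\frac{\langle k\rangle}{|\gamma-ik^3|}\lesssim\gamma^{-2/3}$ and $\frac{|k|}{|\gamma-ik^3|}\lesssim\gamma^{-2/3}$; hence $(\p_x^3+\gamma)^{-1}\colon H^1\to H^2$ and $(\p_x^3+\gamma)^{-1}\p_x\colon H^2\to H^2$ with operator norm $\lesssim\gamma^{-2/3}$. The analogue for $\alpha\p_x^3+\gamma$, whose balance occurs at $|k|\sim(\gamma/\alpha)^{1/3}$, carries an extra factor and yields operator norm $\lesssim\alpha^{-1/3}\gamma^{-2/3}$. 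Combining these with the Banach-algebra property of $H^2(\T)$ (valid since $2>\tfrac12$), which gives $\|q^2\|_{H^2}\lesssim\|q\|_{H^2}^2$ and $\|pq\|_{H^2}\lesssim\|p\|_{H^2}\|q\|_{H^2}$, produces the governing bounds
\[ \|\tilde p\|_{H^2}\lesssim\gamma^{-2/3}\big(\|f\|_{H^1}+\|q\|_{H^2}^2\big),\qquad \|\tilde q\|_{H^2}\lesssim\alpha^{-1/3}\gamma^{-2/3}\big(\|g\|_{H^1}+\|p\|_{H^2}\|q\|_{H^2}\big), \]
together with the corresponding bilinear differences $\|\tilde p-\tilde p'\|_{H^2}\lesssim\gamma^{-2/3}(\|q\|_{H^2}+\|q'\|_{H^2})\|q-q'\|_{H^2}$ and its $\tilde q$ counterpart, which control the Lipschitz constant of $\Phi$.

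With these estimates I would fix a radius $R$ of order $\alpha^{1/3}\gamma^{2/3}$ and run the contraction argument on the ball $\{\|p\|_{H^2},\|q\|_{H^2}\leq R\}$. The quadratic self-map terms are each bounded by a small multiple of $R$, the contraction constants are of size $\alpha^{-1/3}\gamma^{-2/3}R\lesssim1$ (made strictly less than one by taking the implicit constant in $R$ small), and the self-mapping requirement collapses to $\gamma^{-2/3}\|f\|_{H^1}\lesssim R$ and $\alpha^{-1/3}\gamma^{-2/3}\|g\|_{H^1}\lesssim R$. This is exactly where the smallness hypotheses $\|f\|_{H^1}\ll\alpha^{1/3}\gamma^{4/3}$ and $\|g\|_{H^1}\ll\alpha^{1/2}\gamma^{4/3}$ are used. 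Banach's theorem then gives a unique fixed point in the ball, proving the first assertion; the $L^2$/$H^1$ statement follows by repeating the argument with $H^1(\T)$ (still an algebra) in place of $H^2$ and $L^2$ data in place of $H^1$, which replaces the forcing conditions by their $L^2$ analogues.

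I expect the main obstacle to be the quantitative bookkeeping required to make the \emph{stated} conditions exactly sufficient. The two equations are damped asymmetrically — the $q$-equation effectively sees damping $\gamma/\alpha$ together with an extra factor $\alpha^{-1}$ — so the multiplier bounds for the two components carry different powers of $\alpha$, and the cross terms $p(q-q')$ and $(p-p')q$ in the contraction estimate must be balanced against both the $p$- and $q$-radii simultaneously. In particular, the sharp $\alpha^{1/2}$ in the condition on $g$ seems to lie beyond what the single uniform multiplier bound above delivers, so closing it cleanly likely requires a finer, frequency-localized analysis — exploiting that the multiplier $\frac{|k|}{|\gamma-i\alpha k^3|}$ is comparable to its maximum only in a narrow band about $|k|\sim(\gamma/\alpha)^{1/3}$ — or an $\alpha$-weighted product norm chosen so that the self-mapping and contraction inequalities close together. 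Once the correct norm and radius are identified, the remaining steps are routine.
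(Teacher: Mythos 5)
Your framework is the right one and matches the paper's in its essentials: invert the damped operators via the Fourier multipliers $(\gamma-ik^3)^{-1}$ and $(\gamma-i\alpha k^3)^{-1}$, use exactly the bounds $\|\mathcal{M}_1 w\|_{H^{s+1}}\lesssim \gamma^{-2/3}\|w\|_{H^s}$ and $\|\mathcal{M}_2 w\|_{H^{s+1}}\lesssim \alpha^{-1/3}\gamma^{-2/3}\|w\|_{H^s}$, and close a contraction in $H^2$ via the algebra property. But as written the argument does not prove the stated proposition, and you correctly sensed where it fails: on a symmetric ball $\|p\|_{H^2},\|q\|_{H^2}\leq R$, the contraction forces $R\lesssim \alpha^{1/3}\gamma^{2/3}$, and then the self-mapping condition for the second component gives only $\|g\|_{H^1}\lesssim \alpha^{1/3}\gamma^{2/3}\cdot R\sim\alpha^{2/3}\gamma^{4/3}$, which is strictly more restrictive than the stated $\alpha^{1/2}\gamma^{4/3}$ for $\alpha<1$. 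However, your proposed remedy (a frequency-localized analysis) is far heavier than needed; the obstruction is purely the symmetry of the ball, not the multiplier bound. The paper's fix is to eliminate $p$ altogether: substitute $p=\mathcal{M}_1(f-qq_x)$ into the $q$-equation and contract the single map $T(q)=\mathcal{M}_2\big(g-(\mathcal{M}_1(f-qq_x)\,q)_x\big)$ on a ball of radius $R=\frac{1}{2\sqrt{C}}\alpha^{1/6}\gamma^{2/3}$ in $H^2$. The composed operator puts the cascaded gain $\alpha^{-1/3}\gamma^{-2/3}\cdot\gamma^{-2/3}=\alpha^{-1/3}\gamma^{-4/3}$ on the cubic term, so $R^3\alpha^{-1/3}\gamma^{-4/3}\sim R$ fixes $R\sim\alpha^{1/6}\gamma^{2/3}$, and the linear-in-$g$ term then requires exactly $\|g\|_{H^1}\lesssim\alpha^{1/3}\gamma^{2/3}R\sim\alpha^{1/2}\gamma^{4/3}$, while the $f$-term requires $\|f\|_{H^1}\lesssim\alpha^{1/3}\gamma^{4/3}$. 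Equivalently, your second suggestion — an $\alpha$-weighted product norm — does work: take asymmetric radii $R_p\sim\alpha^{1/3}\gamma^{2/3}$, $R_q\sim\alpha^{1/6}\gamma^{2/3}$ and measure distances in $\max\{\|p-p'\|_{H^2}/R_p,\;\|q-q'\|_{H^2}/R_q\}$; both self-mapping and contraction then close under the stated hypotheses, with no frequency localization.

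A second, smaller gap concerns the $L^2$ statement. Your plan to rerun the contraction in $H^1$ (an algebra) with $L^2$ data produces a fixed point in a ball in $H^1$, whereas the proposition asserts a unique solution on a ball in $H^2$ — and the $H^2$ control on $(p,q)$ is what the paper's subsequent convergence-to-the-attractor argument consumes (it needs $C\|p\|_{H^2}+C\|q\|_{H^2}<2\gamma$). The paper instead keeps the fixed point in $H^2$ and exploits the two-derivative smoothing of the inverses at a cheaper cost in $\gamma$: $\|\mathcal{M}_1 w\|_{H^{s+2}}\lesssim\gamma^{-1/3}\|w\|_{H^s}$ and $\|\mathcal{M}_2 w\|_{H^{s+2}}\lesssim\alpha^{-2/3}\gamma^{-1/3}\|w\|_{H^s}$, which is what generates the $L^2$-level thresholds $\|f\|_{L^2}\ll\alpha^{1/3}\gamma$ and $\|g\|_{L^2}\ll\alpha^{5/6}\gamma$ with different powers of $\gamma$ than a naive rescaling of the $H^1$ argument would give. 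One last minor point: your multiplier constants should be checked on the $k=0$ mode — the paper gets the clean bound (constant $\sqrt{2}$ rather than $\max\{\gamma^{-1/3},\sqrt{2}\}$) because it works with mean-zero data, and without that assumption the powers of $\gamma$ shift slightly.
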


\begin{proof}
The proof uses a fixed point argument. To construct the contraction operator, begin by taking the Fourier transform of the stationary system:
\begin{equation*}\begin{cases} -ik^3 p_k + \gamma p_k  + (qq_x)_k = f_k \\ -i \alpha k^3 q_k + \gamma q_k +  ((pq)_x)_k = g_k \end{cases}\end{equation*}

Define Fourier multiplier operators $\mathcal{M}_1$ and $\mathcal{M}_2$ as follows:
\begin{align*}
\mathcal{M}_1: w_k \mapsto \frac{w_k}{\gamma - ik^3} \qquad \qquad \mathcal{M}_2: w_k \mapsto \frac{w_k}{\gamma - i \alpha k ^3}.
\end{align*}
We have $\| \mathcal{M}_1 w \|_{H^{s+1}} \lesssim \frac{1}{\gamma^{2/3}}\|w\|_{H^{s}}$ and $\|\mathcal{M}_2w\|_{H^{s+1}} \lesssim \frac{1}{\alpha^{1/3}\gamma^{2/3}} \|w\|_{H^s}$. To see this, write
\begin{align*} 
\| \mathcal{M}_1 w\|_{H^{s+1}}  
= \left\| \frac{ \lb k \rb ^{s+1} w_k}{\gamma - i k^3} \right\|_{\ell^2} 
&\leq \left\| \frac{\lb k \rb}{\gamma - i k^3} \right\|_{\ell^\infty} \|w\|_{H^s} \\
&\leq \frac{1}{\gamma^{2/3}} \left\| \frac{\lb k \rb}{(\gamma - i k^3)^{1/3}} \right\|_{\ell^\infty} \|w\|_{H^s} 
\leq \frac{\sqrt{2}}{\gamma^{2/3}} \|w\|_{H^s}. 
\end{align*}
The constant in the last inequality is $\sqrt{2}$ and not $\max\{(1/\gamma)^{1/3}, \sqrt{2}\}$ because we're working with mean zero functions. The arguments go through without this assumption, but the power of $\gamma$ will change slightly. 
The other estimate is proved in the same way. Now notice that a solution to \eqref{eq:MBstat} must satisfy $ p = \mathcal{M}_1(f - qq_x)$. Substituting this into the evolution equation for $q$, we find that $q$ must satisfy
\begin{equation*}
q = \mathcal{M}_2( g - (pq)_x) = \mathcal{M}_2\Big( g - \big(\mathcal{M}_1(f - qq_x) q\big)_x\Big).
\end{equation*}
Let $T(q) = \mathcal{M}_2\Big( g - \big(\mathcal{M}_1(f - qq_x) q\big)_x\Big)$. We will find a fixed point of $T$.  
Estimate $T(q)$ as follows:
\begin{align*}
\| T(q) \|_{H^2} &\lesssim \frac{1}{\alpha^{1/3}\gamma ^{2/3}} \|g - (\mathcal{M}_1(f - qq_x)  q)_x \|_{H^1}
\leq \frac{1}{\alpha ^{1/3} \gamma ^{2/3}} \bigg( \|g\|_{H^1} + \|\mathcal{M}_1(f - qq_x)  q\|_{H^2} \bigg)\\
&\leq \frac{1}{\alpha^{1/3} \gamma ^{2/3}}  \bigg( \|g\|_{H^1} + \|\mathcal{M}_1(f - qq_x) \|_{H^2} \| q\|_{H^2} \bigg) \\
&\leq \frac{1}{\alpha^{1/3} \gamma^{2/3}} \bigg( \|g\|_{H^1} + \frac{1}{\gamma^{2/3}}\|(f - qq_x) \|_{H^1} \| q\|_{H^2}\bigg) \\
&\lesssim \frac{\|g\|_{H^1}}{\alpha^{1/3}\gamma^{2/3}} + \frac{\|q\|_{H^2}}{\alpha^{1/3} \gamma^{4/3}} \Big( \|f\|_{H^1}  + \|q\|_{H^2}^2 \Big). 
\end{align*}
Now we make the contraction estimate:
\begin{align*}
&\| T(w) - T(\tilde{w})\|_{H^2} 
= \left\| \mathcal{M}_2\bigg(\Big(\mathcal{M}_1(f- \tilde{w}\tilde{w}_x)  \tilde{w} - \mathcal{M}_1(f - ww_x) w\Big)_x \bigg)\right\|_{H^2}\\
&\lesssim \frac{1}{\alpha^{1/3}\gamma ^{2/3}} \Big\| \mathcal{M}_1(f- \tilde{w}\tilde{w}_x)  \tilde{w} - \mathcal{M}_1(f - ww_x) w \Big\|_{H^2} \\
&= \frac{1}{\alpha^{1/3}\gamma ^{2/3}} \Big\| \mathcal{M}_1(f- \tilde{w}\tilde{w}_x)  (\tilde{w}- w) + \mathcal{M}_1\big((w-\tilde{w})w_x + \tilde{w}(w-\tilde{w})_x\big)  w\Big\|_{H^2} \\
&\leq \frac{1}{\alpha^{1/3}\gamma ^{4/3}} \Big(  \|f - \tilde{w}\tilde{w}_x \|_{H^1} \| w - \tilde{w} \|_{H^2} + \| (w - \tilde{w})w_x\|_{H^1}\|w\|_{H^2} + \| \tilde{w} (w- \tilde{w})_x \|_{H^1} \|w\|_{H^2} \Big) \\
&\lesssim \frac{\|w-\tilde{w}\|_{H^2}}{\alpha^{1/3}\gamma ^{4/3}} \Big(  \|f \|_{H^1} + \|\tilde{w}\|_{H^2}^2 + \|w\|_{H^2}^2 + \| \tilde{w}\|_{H^2} \|w\|_{H^2} \Big). 
\end{align*}
Thus to close the contraction on a ball $\{ q \in H^2 : \|q\|_{H^2} \leq R\}$, two inequalities must hold: 
\[ \frac{\|g\|_{H^1}}{\alpha^{1/3} \gamma^{2/3}} + \frac{\|f\|_{H^1} R + R^3}{\alpha^{1/3}\gamma^{4/3}} \leq \frac{R}{C} \qquad \text{ and } \qquad
\frac{\|f\|_{H^1} + 3R^2}{\alpha^{1/3} \gamma^{4/3}} < \frac1C. \]
These can be satisfied by taking $R  = \frac1{2 \sqrt{C}}\alpha ^{1/6}\gamma^{2/3}$ as long as $\|f\|_{H^1} < \frac1{4C} \alpha^{1/3}\gamma^{4/3}$ and $\|g\|_{H^1} \leq \frac{1}{4{C}^{3/2}} \alpha^{1/2} \gamma^{4/3}$.

The proof for the $L^2$ statement is similar. The only difference is that one uses the estimates $\|\mathcal{M}_1 f\|_{H^{s+2}} \leq \frac{1}{\gamma^{1/3}}\|f\|_{H^s}$ and $\|\mathcal{M}_2g\|_{H^{s+2}} \leq \frac{1}{\alpha^{2/3} \gamma^{1/3}} \|g\|_{H^s}$. 
\end{proof}

\begin{remark}
If $g=0$, the existence of a stationary solution is trivial; the solution is $(\mathcal{M}_1(f), 0)$. The convergence arguments are also greatly simplified in this case. 
\end{remark}

We now show that solutions to \eqref{eq:MBforced} converge to the stationary solution under certain conditions on $f$, $g$, and $\gamma$, implying that the attractor is trivial. Let $(u,v)$ be a solution of the dissipative Majda-Biello system \eqref{eq:MBforced} and define $y = u - p$ and $z = v - q$. We show that if $f$ and $g$ are small relative to $\gamma$, then $y$ and $z$ converge to zero in $H^1$ if $u,v \in H^1$. Notice that $y$ and $z$ satisfy 
\begin{align} \label{eq:yzeq}
\begin{cases}
y_t + y_{xxx} + \gamma y + zz_x + (qz)_x = 0 \\
z_t + \alpha z_{xxx} + \gamma z + (yz)_x + (pz+ qy)_x = 0.
\end{cases}
\end{align}
Recall that $\int u^2 + v^2 \; \d x \;$ and $\int u_x^2 + \alpha v_x^2 - uv^2 \; \d x\;$ are conserved for the original Majda-Biello system.
Our estimates will be based on these conservation laws. Recall $E_3 = \int y^2 + z^2 \; \d x$. Then we have 
\begin{align*}
\frac{\p}{\p t} E_3  &= -2 \int y\big(\gamma y + (qz)_x\big)  + z\big(\gamma z + (pz + qy)_x\big) \; \d x \\
&= -2\gamma E_3 - 2 \int qyz_x + q_x yz +  p_xz^2 + pzz_x + qy_xz + q_xyz \; \d x \\
&= -2 \gamma E_3 - 2 \int 2 q_x yz + q(yz)_x + \frac12 p_xz^2 \; \d x \\
&= -2 \gamma E_3 -  \int 2q_xyz + p_x z^2 \; \d x \\
&\leq -2 \gamma E_3 + \|p_x\|_{L^\infty} \|z\|_{L^2}^2 + 2 \|q_x\|_{L^\infty} \|y\|_{L^2} \|z\|_{L^2} \\
&\leq (-2 \gamma + C\|p\|_{H^2} + C\|q\|_{H^2}) E_3. 
\end{align*}
So to ensure that $E_3 \to 0$ as $t \to \infty$, i.e. that $(u,v) \to (p,q)$ in $L^2$, we need $C\|p\|_{H^2} + C\|q\|_{H^2} < 2 \gamma$. The contraction argument for the existence of $q$ was carried out in a ball of radius $R = \frac{1}{2\sqrt{C}} \alpha^{1/6} \gamma^{2/3}$, so we have $C\|q\|_{H^2} < \gamma$ as long as $\gamma^{1/3} > \frac{\sqrt{C}\alpha^{1/6}}{2}$.  Also notice that 
\[C\|p\|_{H^2} \leq \frac{C}{\gamma^{2/3}} \left( \|f\|_{H^1} + \|qq_x\|_{H^1} \right) \leq \frac{C}{\gamma ^{2/3}} (\|f\|_{H^1} +\|q\|_{H^2}^2).\] This is bounded by $\gamma$ when $C\|f\|_{H^1} < \frac{\gamma^{5/3}}{2}$ and $8 \gamma > \alpha$. So we have a stationary solution and $L^2$ convergence to it whenever $\| f\|_{H^1}, \|g\|_{H^1} \ll \gamma ^{4/3}$ and $\gamma > \sqrt{C^3\alpha}/8$. The same holds when $\| f\|_{L^2}, \|g\|_{L^2} \ll \gamma$ and $\gamma > \sqrt{C^3\alpha}/8$, which completes the proof for the $\alpha = 1$ case. 

For the $H^1$ convergence, we use a modification of the Hamiltonian integral $E_4$:
\[ H_4 = \int y_x^2 + \alpha z_x^2 - yz^2 - 2qyz - pz^2 \; \d x. \]
The last two terms are added to make the time derivative well-behaved. Calculating this derivative, we find
\begin{align*}
\frac{\p }{\p t} &H_4 
= -2\int y_x\Big(\gamma y_x + (qz)_{xx}\Big)\; \d x 
- 2 \alpha \int z_x \Big( \gamma z_x  + (pz + qy)_{xx}  \Big)\; \d x \\
&+  \int  z^2\Big(\gamma y + (qz)_x\Big) \; \d x 
+ 2\int  yz \Big( \gamma z + (pz + qy)_x\Big) \; \d x \\
&+ 2\int qy\Big( \alpha z_{xxx} + \gamma z+(yz)_x + (pz+qy)_x\Big) \; \d x \\ 
&+ 2\int qz \Big( y_{xxx} + \gamma y + zz_x + (qz)_x \Big) \; \d x  
+ 2\int pz \Big(  \alpha z_{xxx} + \gamma z + (yz)_x + (pz + qy)_x \Big) \; \d x \\ 
%%%%%%%%%%%
%&= -2 \gamma H_4 -2 \int y_x(qz)_{xx} \; \d x -2 \alpha \int z_x (pz + qy)_{xx} \; \d x + \int  z^2\Big(\gamma y + (qz)_x\Big) \; \d x  \\ 
%&\quad + 2 \int yz (pz + qy)_x \; \d x + 2 \int qy \Big( \alpha z_{xxx} + (yz)_x + (pz+qy)_x \Big) \; \d x \\ 
%&\quad + 2 \int qz \Big( y_{xxx} + zz_x + (qz)_x \Big) \; \d x + 2 \int pz \Big(  \alpha z_{xxx} + (yz)_x + (pz + qy)_x \Big) \; \d x \\
%%%%%%%%%%%
%&= -2 \gamma H_4 + \int  z^2\Big(\gamma y + (qz)_x\Big) \; \d x  \\
%&\quad + 2 \int yz (pz + qy)_x \; \d x + 2 \int qy \Big((yz)_x + (pz+qy)_x \Big) \; \d x \\
%&\quad + 2 \int qz \Big(zz_x + (qz)_x \Big) \; \d x + 2 \int pz \Big((yz)_x + (pz + qy)_x \Big) \; \d x \\
%%%%%%%%%%%
%&= -2 \gamma H_4 + \int  z^2\Big(\gamma y + (qz)_x\Big) \; \d x  \\
%&\quad + 2 \int yz (pz + qy)_x \; \d x + 2 \int qy (yz)_x  \; \d x \\
%&\quad + 2 \int qz \Big(zz_x + (qz)_x \Big) \; \d x + 2 \int pz (yz)_x \; \d x \\
%%%%%%%%%%%
%&= -2 \gamma H_4 + \int  z^2\Big(\gamma y + (qz)_x\Big) \; \d x  + 2 \int qz \Big(zz_x + (qz)_x \Big) \; \d x \\
%%%%%%%%%%%
&\quad = -2 \gamma H_4 + \gamma \int  yz^2 \; \d x .
\end{align*}
Notice that 
\[ \int yz^2 \; \d x \lesssim \|y\|_{H^1} \|z\|_{L^2}^2 \lesssim e^{-at}\]
by the embedding $L^\infty \hookrightarrow H^1$, the bound on the $H^1$ norm of $y$ (which follows from Lemma \ref{H1bound}), and the decay of the $L^2$ norm of $z$. Here $a = -2\gamma + C\|p\|_{H^2} + C\|q\|_{H^2} > 0$. Thus we have 
\[ \p_t[ e^{2\gamma t} H_4] \lesssim e^{-at}. \]
Integrating this inequality gives $H_4(t) \lesssim e^{-2 \gamma t}$.  
Furthermore, since $\|y\|_{L^2}^2 + \|z\|_{L^2}^2 \to 0$ as $t \to \infty$ and the $L^2$ norms of $p$, $q$, $y$, and $z$ are bounded, we have  
\[ \left| \int yz^2 + 2qyz + pz^2 \; \d x \right| \lesssim \|y\|_{H^1}\|z\|_{L^2}^2 + 2 \|q\|_{H^1} \|y\|_{L^2}\|z\|_{L^2} + \|p\|_{H^1}\|z\|_{L^2}^2 \to 0 \text{ as } t \to \infty. \]
Thus we have 
\[ \left| \int y_x^2 + \alpha z_x^2 \; \d x \right| \leq |H_4| + \left| \int yz^2 + 2qyz + pz^2 \; \d x \right| \to 0 \text{ as } t \to \infty. \] 
This, along with the $L^2$ convergence show above, implies that $ y = u - p$ and $z = v - q$ converge to zero in $H^1$. 
%%%%%%%%%%%%%%%%%%%%
%% ESTIMATE PROOFS
%%%%%%%%%%%%%%%%%%%%
\section{Proofs of Smoothing Estimates} \label{estimateproofs}

To begin, we state a standard calculus lemma which will be used repeatedly. See, e.g., the appendix of \cite{ET2} for proofs of similar results. 

\begin{lem} \label{sumest}
\begin{enumerate}[(a)]
\item If $\beta \geq \gamma > 1$, then
\[ \sum_n \frac{1}{\lb n - k_1 \rb ^\beta \lb n - k_2 \rb^\gamma} \lesssim \lb k_1 - k_2 \rb ^{-\gamma}. \]
\item If $\beta > \frac13$, then 
\[ \sum_n \frac{1}{ \lb n^3 + an^2 + bn + c \rb^\beta} \lesssim 1, \]
with implicit the constant independent of $a$, $b$, and $c$. 
\end{enumerate}
\end{lem}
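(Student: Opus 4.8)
The two parts are independent: part (a) is a routine divide-and-conquer estimate, while part (b) carries the real content. For part (a), the plan is to split the sum over $n \in \mathbb{Z}$ according to which denominator factor is larger. Since $\lb k_1 - k_2 \rb \lesssim \max\{\lb n - k_1 \rb, \lb n - k_2 \rb\}$ for every $n$ by the triangle inequality, each $n$ lies in at least one of the regions $\{\lb n - k_1\rb \gtrsim \lb k_1 - k_2\rb\}$ or $\{\lb n - k_2 \rb \gtrsim \lb k_1 - k_2\rb\}$. On the first region I bound $\lb n - k_1 \rb^{\beta} \gtrsim \lb k_1 - k_2\rb^{\beta} \ge \lb k_1 - k_2\rb^\gamma$ (here the hypotheses $\beta \ge \gamma$ and $\lb k_1 - k_2\rb \ge 1$ are used), pull the factor $\lb k_1 - k_2\rb^{-\gamma}$ out of the sum, and are left with $\sum_n \lb n - k_2\rb^{-\gamma}$, which converges because $\gamma > 1$. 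On the second region I instead extract $\lb n - k_2 \rb^{-\gamma} \gtrsim \lb k_1 - k_2\rb^{-\gamma}$ and bound the residual $\sum_n \lb n - k_1\rb^{-\beta} \lesssim 1$ using $\beta > 1$. In both cases the residual series is a convergent $p$-series independent of $k_1, k_2$, so $\lb k_1 - k_2\rb^{-\gamma}$ is exactly what survives.

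For part (b) I would first reduce to a uniform lattice-point count. Write $P(n) = n^3 + a n^2 + b n + c$ and decompose dyadically: the integers with $\lb P(n)\rb \approx 2^j$ contribute at most $2^{-j\beta} N(2^{j+1})$, where $N(M) := \#\{n \in \mathbb{Z} : |P(n)| \le M\}$. Thus the whole sum is controlled by $\sum_{j \ge 0} 2^{-j\beta} N(2^{j+1})$, and it suffices to prove the bound $N(M) \lesssim 1 + M^{1/3}$ with an \emph{absolute} implicit constant, since then the series is dominated by $\sum_{j\ge 0} 2^{-j\beta}(1 + 2^{j/3})$, which converges precisely when $\beta > \tfrac13$.

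To prove $N(M) \lesssim 1 + M^{1/3}$ I would pass to the real preimage $S := P^{-1}([-M,M])$. Two facts drive the estimate. First, $S$ is a union of at most three intervals: $\{P \le M\}$ and $\{P \ge -M\}$ are each a union of at most two intervals, since $P \mp M$ is a cubic with at most three real roots, and their intersection has at most three components. Second, and this is the crux, a monic cubic has oscillation at least $c\,L^3$ over any interval of length $L$, with $c > 0$ absolute. To see this I would remove the quadratic term via the translation $x \mapsto x - a/3$, which preserves both interval lengths and oscillation and reduces $P$ to a depressed cubic $x^3 + p x + (\text{const})$; since oscillation is also insensitive to the additive constant, the minimal oscillation over length-$L$ intervals becomes, after rescaling, an explicit one-parameter computation in $p$, bounded below by $c L^3$. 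Granting this, every component $I \subseteq S$ satisfies $c\,|I|^3 \le \operatorname{osc}_I P \le 2M$, hence $|I| \lesssim M^{1/3}$; summing the integer counts $|I| + 1$ over the at most three components yields $N(M) \lesssim 1 + M^{1/3}$.

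The main obstacle is establishing the oscillation lower bound \emph{uniformly} in the coefficients, i.e.\ uniformly in $p$ after the reduction. The delicate regime is the near-degenerate one in which the two critical points of $P$ coalesce and $P'$ vanishes to second order near the inflection point, precisely where one might fear that $P$ is too flat to force cubic-root length scaling. The point is that one must never divide by $P'$; instead, working with the oscillation directly (equivalently, integrating the lower bound $P'(x) \gtrsim (\text{distance to the nearer critical point})^2$) degrades gracefully as the critical points merge, still yielding $\operatorname{osc}_I P \gtrsim L^3$ and hence the sharp count. Since every constant produced this way is absolute, the resulting bound on the sum is automatically independent of $a$, $b$, and $c$, as required.
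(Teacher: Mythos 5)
The paper never proves this lemma: it is stated as a standard calculus fact with a pointer to the appendix of \cite{ET2}, so your argument stands on its own rather than against an in-paper proof. Your part (a) is exactly the standard two-region splitting and is correct as written. For part (b), the architecture is sound and does yield the claim with absolute constants: the dyadic reduction to the counting bound $N(M)\lesssim 1+M^{1/3}$, the observation that $P^{-1}([-M,M])$ is an intersection of two sets with at most two components each (hence at most three intervals), and the oscillation bound $\operatorname{osc}_I P\gtrsim |I|^3$ for monic cubics together close the argument, and the series $\sum_{j\ge 0}2^{-j\beta}(1+2^{j/3})$ converges precisely for $\beta>\frac13$ as you say.

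One step of your crux needs repair. Integrating the pointwise bound $|P'(x)|\gtrsim d(x)^2$, where $d(x)$ is the distance to the nearer critical point, lower-bounds the \emph{total variation} $\int_I|P'|$, and total variation dominates the oscillation, not the reverse: $\operatorname{osc}_I P=\max-\min\le \int_I |P'|$, so as written this does not bound the oscillation from below. The fix is one line, and you already have the needed structure in hand: a cubic has at most two interior critical points, so $I$ splits into at most three monotone pieces, on each of which the variation equals that piece's oscillation and is therefore at most $\operatorname{osc}_I P$; hence $\operatorname{osc}_I P\ge \frac13\int_I|P'|\gtrsim |I|^3$, using the elementary fact that $\int_I d(x)^2\,dx\gtrsim |I|^3$ for any two points (and, when $P'$ has complex roots $z,\bar z$, the bound $|P'(x)|\ge 3(x-\operatorname{Re}z)^2$ gives the same conclusion with $P$ monotone). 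Alternatively one can integrate only over the longest monotone subinterval, of length at least $|I|/3$. A slicker route avoids critical points entirely: the third finite difference of a monic cubic is exact, $P(x+3h)-3P(x+2h)+3P(x+h)-P(x)=6h^3$, so taking $h=|I|/3$ gives $6h^3\le 4\operatorname{osc}_I P$, i.e. $\operatorname{osc}_I P\ge \frac{|I|^3}{18}$, with uniformity in $a,b,c$ manifest and no case analysis in the depressed-cubic parameter. With either repair your proof is complete and is in the same spirit as the counting arguments in the cited appendix.
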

The proofs for the cases where $\alpha = \frac{q^2}{3p(p-q) + q^2}$ are much easier than those for the general cases, and are therefore not explicitly included. 

%%%%%%%%%%%%%%%%%%%%
%% B1 PROOF
%%%%%%%%%%%%%%%%%%%%
\subsection{Proof of Proposition \ref{B1}} \label{B1proof}

By symmetry, it suffices to consider $|k_1| \gtrsim |k_2|$. Then we need to bound 
\begin{align*}
 \left\| \sum_{\substack{k_1+k_2 = k \\ |k_1| \gtrsim |k_2|}}^* \frac{\langle k \rangle^{1+s_1} u_{k_1}v_{k_2}}{k^3 - \alpha k_1^3 - \alpha k_2^3} \right\|_{\ell^2_k}
\end{align*}

\noindent\textbf{Case 1.} $|k_1 - c_1k| \geq \frac12$ and $|k_1 - c_2k| \geq \frac12$ 

Note that 
\begin{align*}
|k^3 - \alpha k_1^3 -\alpha k_2^3| = |3\alpha k(k_1 - c_1k)(k_1 - c_2k)| 
\gtrsim |k| \cdot \max\{|k_1 - c_1k|, |k_1 - c_2 k|\} \\
\gtrsim |k|\cdot (c_1 - c_2)|k| 
\gtrsim |k|^2. 
\end{align*}
 Then using $|k_1| \gtrsim k$, the assumption that $s_1-s -1 \leq 0$, and Young's inequality, we find
\begin{align*}
\| B_1(u,v) \|_{H^{s_1}_x} &\lesssim \left\| \sum_{\substack{k_1+k_2 = k \\ |k_1| \gtrsim |k_2|}}\langle k \rangle ^{s_1-s-1} \frac{|u_{k_1}| \langle k_1\rangle^s |v_{k_2}|\langle k_2\rangle^s}{\langle k_2\rangle^s} \right\|_{\ell^2_k} \\
&\leq \left\| \sum_{k_1+k_2 = k} \frac{|u_{k_1}| \langle k_1\rangle^s |v_{k_2}|\langle k_2\rangle^s}{\langle k_2\rangle^s} \right\|_{\ell^2_k} 
\leq \| u\|_{H^s_x} \left\| \frac{v_{k} \langle k \rangle ^s}{\langle k \rangle ^s} \right \|_{\ell^1_k}\\
& \leq \| u\|_{H^s_x} \| v\|_{H^s_x} \| \langle k \rangle ^{-s}\|_{\ell^2_k} 
\lesssim \| u\|_{H^s_x} \| v\|_{H^s_x}.
\end{align*} 

\noindent\textbf{Case 2.} $|k_1 - c_1k| < \frac12$ or $|k_1 - c_2k| < \frac12$ 

Assume that $|k_1 - c_1k| < \frac12$. The other case is parallel. Note that $| k_1 - c_1k| \geq M_{\epsilon_0} |k|^{-1 - \nu_{c_1} - \epsilon_0}$ for any $\epsilon_0 > 0$, where $\nu_{c_1}$ is the minimal type index of $c_1$. This holds because
\[ | k_1 - c_1 k | = |k|\left| \frac{k_1}{k} - c_1 \right| \geq |k|\frac{M_{\epsilon_0} }{|k|^{2 + \nu_{c_1} + \epsilon_0}}\]
for any positive $\epsilon_0$ by definition of the minimal type.  
Therefore
\begin{align*}
|k^3 -\alpha k_1^3 - \alpha k_2^3 | = 3\alpha |k(k_1-c_1k)(k_1-c_2k)| 
&\geq 3\alpha M_{\epsilon_0} |k|^{-\nu_{c_1}-\epsilon_0}\Big((c_1-c_2)|k| - \frac12\Big) \\
&\gtrsim |k|^{1-\nu_{c_1} - \epsilon_0}.
\end{align*}
In this region there is only one term in the sum -- the one with $k_1 \simeq c_1k$ and $k_2 \simeq (1-c_1)k$. Using Cauchy-Schwartz with the fact that $|k| \approx |k_1| \approx |k_2|$, we get for this part of the sum
\begin{align*}
\|B_1(u,v)\|_{H^{s_1}_x} \lesssim \left\| \langle k \rangle ^{s_1 + \nu_{c_1} + \epsilon_0} u_{k_1} v_{k_2} \right\|_{\ell^2_k} 
\lesssim \left\| \langle k \rangle ^{(s_1 + \nu_{c_1} + \epsilon_0)/2} u_{k}\right\|_{\ell^4_k} \left\| \langle k \rangle ^{(s_1 + \nu_{c_1} + \epsilon_0)/2} v_{k}\right\|_{\ell^4_k} \\
\lesssim \left\| \langle k \rangle ^{(s_1 + \nu_{c_1} + \epsilon_0)/2} u_{k}\right\|_{\ell^2_k} \left\| \langle k \rangle ^{(s_1 + \nu_{c_1} + \epsilon_0)/2} v_{k}\right\|_{\ell^2_k} 
\lesssim \| u\|_{H^s_x}\| v\|_{H^s_x},
\end{align*}
where the third inequality holds when $s_1 -s < s - \nu_{c_1}$. 

%%%%%%%%%%%%%%%%%%%%
%% B2 PROOF
%%%%%%%%%%%%%%%%%%%%
\subsection{Proof of Proposition \ref{B2}}

Write 
\begin{align*}
\| B_2(u,v) \|_{H^{s_1}_x} &\lesssim \left\| \sum_{k_1 + k_2 = k}^* \frac{\lb k \rb^{1 + s_1} u_{k_1}v_{k_2}}{\alpha k^3 - k_1^3 - \alpha k_2^3} \right\|_{\ell^2_k}.
\end{align*}

\noindent\textbf{Case 1.} $|k_1 - d_1k|, |k_1 - d_2k| \geq \epsilon$ and $|k_1| \geq \epsilon|k|$ 

In this case, $|k_1| \gtrsim |k_2|$ and  
\[ |\alpha k^3 - k_1^3 -\alpha k_2^3| = |(1-\alpha)k_1(k_1-d_1k)(k_1-d_2k)| \gtrsim |kk_1|. \]
The argument in Case 1 of the $B_1$ estimate gives the bound when $s_1 - s \leq 1$ and $s > \frac12$. 

\noindent\textbf{Case 2.} $|k_1| \leq \epsilon|k|$ 

Recall that $k_1 \neq 0$ since $u$ is mean zero and write
\[ k_1 = \mu k  \text{  for some  } |\mu| \in [1/|k|, \epsilon]. \]
Then 
\begin{align*}
|\alpha k^3 - k_1^3 - \alpha (k-k_1)^3 | &= |\mu k^3||3 \alpha (1-\mu) - \mu^2(1-\alpha)| 
\geq k^2 \left(3\alpha(1-\epsilon) - \epsilon^2(1-\alpha) \right)  \gtrsim k^2.
\end{align*}
Apply the argument from Case 1 of the $B_1$ proof again to get the bound when $s_1 -s \leq 1$ and $s > \frac12$. 

\noindent\textbf{Case 3.} $|k_1 - d_1k| \leq \epsilon$ or $|k_1 - d_2k| \leq \epsilon$, with $|k_1| \geq \epsilon |k|$ 

Assume $|k_1 - d_1k| \leq \epsilon$. The other case is parallel. Note that in this region $|k| \sim |k_1| \sim |k_2|$ and the values of $k_1$ and $k_2$ are determined by $k$. We need only bound the following sum, where $k_1 \simeq d_1k$ and $k_1 + k_2 = k$,
\begin{align*}
\left( \sum_{\substack{k }} \lb k \rb ^{2(s_1 + \nu_{d_1} + \epsilon_0)} u_{k_1}^2v_{k_2}^2 \right)^{1/2}
&\lesssim \left\| \lb k \rb ^{s_1 + \nu_{d_1} + \epsilon_0} u_{k}^2 \right\|_{\ell^2_k}^{1/2}\left\| \lb k \rb ^{s_1 + \nu_{d_1} + \epsilon_0} v_{k}^2 \right\|_{\ell^2_k}^{1/2} \\
&= \left\| \lb k \rb ^{(s_1 + \nu_{d_1} + \epsilon_0)/2} u_{k} \right\|_{\ell^4_k}\left\| \lb k \rb ^{(s_1 + \nu_{d_1} + \epsilon_0)/2} v_{k} \right\|_{\ell^4_k} \\
&\leq\left\| \lb k \rb ^{(s_1 + \nu_{d_1} + \epsilon_0)/2} u_{k} \right\|_{\ell^2_k}\left\| \lb k \rb ^{(s_1 + \nu_{d_1} + \epsilon_0)/2} v_{k} \right\|_{\ell^2_k} \\
&\lesssim \|u\|_{H^s_x} \|v\|_{H^s_x} . 
\end{align*}
The last inequality holds when $s_1 + \nu_{d} + \epsilon_0 \leq 2s$, i.e. when $s_1 - s < s -\nu_d$.  

%%%%%%%%%%%%%%%%%%%%
%% R1 PROOF
%%%%%%%%%%%%%%%%%%%%
\subsection{Proof of Proposition \ref{R1}}

We need to establish
\begin{align} \label{R1statement} 
\left\| \sum_{k_1 + k_2 + k_3 = k}^* \frac{(k_1 + k_2)u_{k_1}v_{k_2}w_{k_3}}{(k_1 + k_2 - c_1k)(k_1+k_2 - c_2 k )} \right\|_{X^{s_1,b-1}_1} \lesssim \|u\|_{X^{s,1/2}_1} \|v\|_{X^{s,1/2}_\alpha}\|w\|_{X^{s,1/2}_\alpha}. 
\end{align}
Define the following functions
\begin{align*} 
f(k, \tau) &= \lb k \rb^s \lb \tau -  k^3 \rb^{1/2} u_{k} \qquad \\
g(k, \tau) &= \lb k \rb^s \lb \tau -  \alpha k^3 \rb^{1/2} v_{k} \qquad \\
h(k, \tau) &= \lb k \rb^s \lb \tau -  \alpha k^3 \rb^{1/2} w_{k}. 
\end{align*}
Then (\ref{R1statement}) amounts to showing that 
\begin{align} \label{R1statement2}
\left\| \int\limits_{\sum \tau_i = \tau} \sum^*\limits_{\sum k_i = k} M \; f(k_1,\tau_1) g(k_2, \tau_2) h(k_3, \tau_3)  \; \d\tau_1 \d \tau_2 \d \tau_3 \right\|_{L^2_\tau\ell^2_k}^2 
\lesssim \|f\|_{L^2_\tau \ell^2_k}^2 \|g\|_{L^2_\tau\ell^2_k}^2 \|h\|_{L^2_\tau\ell^2_k}^2 
\end{align}
where the multiplier $M= M(k_1, k_2, k_3, k, \tau_1, \tau_2, \tau_3, \tau)$ is
\begin{align*}
M   = \frac{(k_1+ k_2)\lb k \rb^{s_1}\lb k_1 \rb^{-s} \lb k_2 \rb ^{-s} \lb k_3 \rb^{-s}}{(k_1 + k_2 - c_1k)(k_1+k_2 - c_2 k )\lb \tau - k^3 \rb^{1-b}\lb \tau_1 - k_1^3 \rb^{1/2} \lb \tau_2 - \alpha k_2^3 \rb^{1/2} \lb \tau_3 - \alpha k_3^3 \rb ^{1/2}}. 
\end{align*}

Apply Cauchy-Schwartz in the $\tau_1$, $\tau_2$, $\tau_3$, $k_1$, $k_2$, and $k_3$ variables to bound the left-hand side of (\ref{R1statement2}) by 
\begin{align*}
\sup_{k, \tau} \bigg( \int\limits_{\sum \tau_i = \tau} \sum^*\limits_{\sum k_i = k} M^2 \bigg)  
\left\| \int\limits_{\sum \tau_i = \tau} \sum^*\limits_{\sum k_i = k} f^2(k_1,\tau_1) g^2(k_2, \tau_2) h^2(k_3, \tau_3)  \; \d\tau_1 \d \tau_2 \d \tau_3 \right\|_{L^1_\tau\ell^1_k} 
\end{align*}
Using Young's inequality twice bounds the $L^1\ell^1$ norm above by $\|f\|_{L^2_\tau \ell^2_k}^2 \|g\|_{L^2_\tau\ell^2_k}^2 \|h\|_{L^2_\tau\ell^2_k}^2$. Thus it suffices to show that the supremum on the left is finite. We can further simplify matters by repeatedly using the calculus estimate 
\[ \int_\mathbb{R} \frac{1}{\lb x \rb ^{\beta} \lb x -b \rb} \d x \lesssim \lb b \rb ^{-\beta},\] which holds for $\beta \in (0,1]$ (see \cite{ET2} for a proof), to eliminate the $\tau$ dependence and bound the supremum by 
\begin{align*}
\; \sup_k \;  \sum_{k_1,k_2 }^* \frac{\lb k \rb ^{2s_1} \lb k_1 \rb ^{-2s} \lb k_2 \rb ^{-2s} \lb k - k_1 -k_2 \rb^{-2s} |k_1 + k_2|^2}{(k_1 + k_2 - c_1k)^2(k_1+k_2 - c_2k)^2 \lb k^3 -k_1^3 - \alpha k_2^3 - \alpha (k- k_1 - k_2)^3\rb ^{2 -2b-}},
\end{align*}
or equivalently, using the change of variables $k_2 \mapsto n - k_1$,
\begin{align*}
\; \sup_k \;  \sum_{k_1,n }^* \frac{\lb k \rb ^{2s_1} \lb k_1 \rb ^{-2s} \lb n-k_1 \rb ^{-2s} \lb n-k\rb^{-2s} n^2}{(n - c_1k)^2(n- c_2k)^2 \lb k^3 -k_1^3 - \alpha (n-k_1)^3 - \alpha (k - n)^3\rb ^{2 -2b-}}.
\end{align*}
We will show that this supremum is finite by considering a number of cases. In the following, to simplfiy notation we will write $2-2b$ instead of the technically correct $2 - 2b -$. Since we take $b = \frac12 + $, this $\epsilon$-difference has no effect on the calculations. 

\noindent\textbf{Case 1.} $k_1 = k$ 

In this case, the supremum becomes 
\[ \; \sup_k \;  \sum_{n}^* \frac{\lb k \rb ^{2s_1 - 2s} \lb n-k\rb ^{-4s} n^2}{(n - c_1k)^2(n- c_2k)^2}. \]

\noindent\textbf{Case 1.1.} $kn > 0$ 

Since $c_2 < 0$, we cancel $n^2$ with $(n - c_2k)^2$ to obtain
\[ \; \sup_k \;  \sum_{n}^* \frac{\lb k \rb ^{2s_1 - 2s} \lb n-k\rb ^{-4s}}{(n - c_1k)^2}. \]

If $| n - c_1k| \geq \epsilon$, with $\epsilon$ small but fixed, then the supremum is bounded by 
\begin{align*}
\; \sup_k \;  \lb k \rb ^{2s_1 - 2s} \sum_{n} \frac{\lb n-k\rb ^{-4s}}{\lb n - c_1k\rb^2} 
\lesssim  \; \sup_k \;  \lb k \rb ^{2s_1 - 2s} \lb (c_1 - 1)k \rb^{-2} 
\lesssim \; \sup_k \;  \lb k \rb ^{2s_1 - 2s - 2},
\end{align*}
which is finite for $s_1 -s \leq 1$. In the first inequality, we used Lemma \ref{sumest}(a). 

If $|n - c_1k| < \epsilon$, then there's only one term in the sum since $n \simeq c_1$, and we have $|n-k| \gtrsim |k|$. Using the minimal type index, the supremum is bounded by 
\begin{align*}
\; \sup_k \;  \lb k \rb ^{2s_1 - 6s + 2 +2\nu_{c_1} + 2\epsilon_0},
\end{align*}
which is finite when $s_1 - s  < 2s - 1 - \nu_{c_1}$. 

\noindent\textbf{Case 1.2.} $kn < 0$ 

For this case, cancel $n^2$ with $(n - c_1k)^2$ and repeat the argument from Case 1.1. 

\noindent\textbf{Case 1.3.} $kn = 0$ 

The supremum is immediately bounded in this case. 

\noindent\textbf{Case 2.} $kn > 0$ with $k_1 \neq k$ 

In this region, the supremum is bounded by 
\begin{align*}
\; \sup_k \;  \sum_{k_1,n }^* \frac{\lb k \rb ^{2s_1} \lb k_1 \rb ^{-2s} \lb n-k_1 \rb ^{-2s} \lb n-k\rb^{-2s}}{(n - c_1k)^2\lb k^3 -k_1^3 - \alpha (n-k_1)^3 - \alpha (k - n)^3\rb ^{2 -2b}}.
\end{align*}

\noindent\textbf{Case 2.1} $|n - c_1k| \geq \epsilon |k|$ 

Here the supremum is bounded by 
\begin{align*}
\sup_k \;  \lb k \rb ^{2s_1 -2} \sum_{k_1,n } \lb k_1 \rb ^{-2s} \lb n-k_1 \rb ^{-2s} \lb n-k\rb^{-2s} 
\lesssim \; \sup_k \; \lb k \rb ^{2s_1 -2 - 2s} 
<  \infty 
\end{align*}
for $s_1 - s \leq 1$. This estimate comes from applying Lemma \ref{sumest}(a) repeatedly. 

\noindent\textbf{Case 2.2.} $\epsilon \leq |n-c_1k| < \epsilon |k|$ 

Note that $|n| \in \big((c_1 - \epsilon)|k|, (c_1 + \epsilon)|k|\big)$. Choose $\epsilon < c_1 - 1$ so that $|n - k| \gtrsim |k|$. The supremum is then bounded  by 
\begin{align*}
\; \sup_k \;  \lb k \rb ^{2s_1 -2s} \sum_{k_1,|n| \geq |k| }  \lb k_1 \rb ^{-2s} \lb n-k_1 \rb ^{-2s} 
\lesssim & \; \sup_k \;  \lb k \rb ^{2s_1 -2s} \sum_{|n| \geq |k| } \lb n \rb ^{-2s} 
\lesssim &  \; \sup_k \;  \lb k \rb ^{2s_1 -4s + 1}. 
\end{align*}
This is finite when $s_1 - s \leq s - \frac12$. 

\noindent\textbf{Case 2.3.} $|n - c_1k| < \epsilon$ 

\noindent\textbf{Case 2.3a.} $|k_1|, |k_1-n| \geq \epsilon |k|$ 

In this case, the supremum is bounded by 
\begin{align*}
&\; \sup_k \;  \lb k \rb ^{2s_1 + 2  + 2 \nu_{c_1} + 2\epsilon_0 -6s} \sum_{\substack{k_1 \\ (n \simeq c_1k)}} \lb k^3 -k_1^3 - \alpha(n-k_1)^3 + \alpha (n-k)^3 \rb^{-(2-2b)} \\
\lesssim & \; \sup_k \;  \lb k \rb ^{2s_1 + 2  + 2 \nu_{c_1} + 2\epsilon_0 -6s},
\end{align*}
which is finite for $s_1 - s < 2s - 1 -\nu_{c_1}$. This estimate comes from Lemma \ref{sumest}(b).  

\noindent\textbf{Case 2.3b.} $|k_1| < \epsilon |k|$ 

Note that in this case 
\[|k_1 - n| \geq c_1|k| - |n-c_1k| - |k_1| > (c_1 - \epsilon)|k| - \epsilon\]
so that $|k_1 - n| \gtrsim |k|$. Recall $k_1 \neq 0$ by the mean zero assumption on $u$, and write
\[ n = c_1 k + \delta \text{  for some } |\delta| < \epsilon, \qquad \qquad k_1 = \mu k \text{  for some  } |\mu| \in [1/|k|, \epsilon). \]
Then use the fact that $1- \alpha  = 3\alpha c_1 (c_1 - 1)$ to calculate that
\begin{align*}
& \quad |k^3 - k_1^3 -\alpha(n-k_1)^3 + \alpha (n-k)^3 | \\
%&= |k-k_1|\left|(1-\alpha)(k^2 + kk_1 + k_1^2) - 3\alpha n(n-k-k_1)\right| \\
%&= |k-k_1|\left|(1-\alpha)(1+ \mu + \mu^2) k^2 - 3\alpha(c_1k + \delta)[(c_1-1-\mu)k + \delta]\right| \\
%&= |k-k_1|\left|(1-\alpha)(\mu + \mu^2)k^2 + 3\alpha(c_1 k + \delta)(\mu k - \delta) -3 \alpha \delta(c_1-1)k\right| \\
& = |k-k_1|\left|\mu k^2[(1-\alpha)(1+ \mu) + 3 \alpha c_1] + 3\alpha \delta[1 + \mu - 2 c_1]k - 3 \alpha \delta^2\right| \\
& \geq |k-k_1| \left[ (3\alpha c_1 + (1 - \alpha)(1 - \epsilon))|k|  - 3 \alpha \epsilon(2c_1 + \epsilon - 1)|k| - 3 \alpha \epsilon ^2 \right] 
\gtrsim |k_1 - k| |k|.
\end{align*}
Using Lemma \ref{sumest}(a) again, the supremum is bounded by
\begin{align*}
\; &\sup_k \;  \lb k \rb ^{2s_1 + 2 + 2 \nu_{c_1} +2 \epsilon_0 -4s} \sum_{ k_1} \frac{ \lb k_1 \rb ^{-2s}}{\lb (k-k_1)k \rb ^{2 - 2b}} \\
\lesssim &\sup_k \;  \lb k \rb ^{2s_1 + 2 + 2 \nu_{c_1} +2 \epsilon_0 -4s - (2-2b)} \sum_{ k_1} \frac{ \lb k_1 \rb ^{-2s}}{\lb k-k_1 \rb ^{2 - 2b}} 
\lesssim  \; \sup_k \;  \lb k \rb ^{2s_1 + 2 + 2 \nu_{c_1} +2 \epsilon_0 -4s - 2(2-2b)}, 
\end{align*}
which is finite when $s_1 - s < s + 1 - 2b - \nu_{c_1}$. 

\noindent\textbf{Case 2.3c.} $|n - k_1| < \epsilon |k|$ 

In this case we have $|k_1| \geq |n| -|n - k_1| \geq (c_1 - \epsilon)|k| - \epsilon$
so that  $|k_1| \geq |k|$. 
The supremum is bounded by 
\begin{align*}
\; \sup_k \;  \lb k \rb ^{2s_1 +2 + 2\nu_{c_1} + 2\epsilon_0 -4s}\sum_{\substack{k_1 \\ n \simeq c_1k }} \frac{ \lb n-k_1 \rb ^{-2s}}{\lb k^3 -k_1^3 - \alpha (n-k_1)^3 - \alpha (k - n)^3\rb ^{2 -2b}}.
\end{align*}

We may assume, since $(k,k_1,n) \to (-k,-k_1,-n)$ is a symmetry for the supremum, that $k_1 >0$. Then in our case of $kn >0$, we must have $k,n >0$, since otherwise $|k_1 - n| > |n| \simeq c_1|k|$. 

Notice that the following three inequalities hold:
\begin{align*}
k^2 + k_1k+k_1^2 \geq k^2, \qquad \qquad
3\alpha n  > 0, \qquad \text{and} \qquad
k_1 - (n-k) \geq (1-\epsilon)k - 2\epsilon.  
\end{align*}
Thus we have
\[ (1-\alpha)(k^2 + kk_1 + k_1^2) - 3\alpha n (n-n-k_1) \gtrsim k^2, \]
which implies that
\[ k^3 -k_1^3 - \alpha (n-k_1)^3 - \alpha (k-n)^3 \gtrsim |k-k_1|k^2.\]
The supremum is therefore bounded by
\begin{align*}
\; \sup_k \;  \lb k \rb ^{2s_1 +2 + 2\nu_{c_1} + 2\epsilon_0 -4s}\sum_{\substack{k_1 \\ n \simeq c_1k }}\frac{ \lb n-k_1 \rb ^{-2s}}{\lb k^2(k-k_1) \rb ^{2 -2b}}
\lesssim \; \sup_k \;  \lb k \rb ^{2s_1 +2 + 2\nu_{c_1} + 2\epsilon_0 -4s - 6 + 6b},
\end{align*}
which is finite if $s_1 - s < s+ 2 - 3b - \nu_{c_1}$. 

\noindent\textbf{Case 3.} $kn < 0$ and $k_1 \neq k$ 

In this case, the supremum can be bounded by 
\[ \; \sup_k \;  \lb k \rb^{2s_1 - 2s} \sum_{\substack{k_1 > 0 \\ n}}^* \frac{ \lb k_1 \rb^{-2s} \lb n - k_1\rb ^{-2s}}{(n - c_2k)^2\lb k^3 -k_1^3 \alpha(n-k_1)^3 - \alpha(k-n)^3 \rb^{2-2b}}. \]

\noindent\textbf{Case 3.1.} $|n - c_2k | \geq \epsilon |k|$ 

If $s > \frac12$ and $s_1 -s \leq 1$, the supremum is bounded by 
\begin{align*}
\; \sup_k \;  \lb k \rb^{2s_1 - 2s  -2 } \sum_{\substack{k_1 > 0 \\ n}} \lb k_1 \rb^{-2s} \lb n - k_1\rb ^{-2s}
\lesssim & \; \sup_k \;  \lb k \rb^{2s_1 - 2s  -2 } <  \infty, 
\end{align*}
. 

\noindent\textbf{Case 3.2.} $\epsilon \leq |n - c_2 k | < \epsilon |k|$ 

Note that $|n| \geq (|c_2| - \epsilon) |k| \approx |k|$. When $s_1 -s \leq s - \frac12$, the supremum is bounded by 
\begin{align*}
\sup_k \;  \lb k \rb^{2s_1 - 2s } \sum_{\substack{k_1 > 0 \\ |n| \gtrsim |k|}} \lb k_1 \rb^{-2s} \lb n - k_1\rb ^{-2s} 
< \; \sup_k \;  \lb k \rb^{2s_1 - 2s } \sum_{ |n| \gtrsim |k|} \lb n \rb ^{-2s} 
<  \infty.
\end{align*}

\noindent\textbf{Case 3.3.} $|n - c_2k| < \epsilon $ 

\noindent\textbf{Case 3.3a.} $|k_1|, |k_1 - n| \geq \epsilon |k|$ 

As in Case 2.3a, the supremum is finite if $s_1 -s < 2s - 1 - \nu_{c_1}$. 

\noindent\textbf{Case 3.3b.} $|k_1| < \epsilon |k|$ 

Here $|k_1 - n| \geq (|c_2| - \epsilon)|k| - \epsilon$, so $|k_1 - n | \gtrsim |k|$. 
Write 
\[ k_1 = \mu k \text{  for some  } |\mu| \in [1/|k|, \epsilon ), \qquad  n = c_2 k + \delta \text{  for some  } |\delta| < \epsilon. \]
Expanding the resonance equation with this notation gives
\begin{align*}
& \quad |k^3 - k_1^3 - \alpha (n - k_1)^3 - \alpha (k-n)^3| \\
&=  |k - k_1| \left|  \mu k^2(3\alpha c_2^2 + \mu(1-\alpha)) + 3 \alpha \delta(1 + \mu - 2c_2)k - 3 \alpha \delta^2 \right| \\
&\geq |k - k_1| \left[ \left( 3 \alpha c_2^2 - \epsilon(1-\alpha) - 3\alpha\epsilon[1 + \epsilon - 2 c_2] \right) |k| - 3 \alpha \epsilon^2 \right] 
 \gtrsim |k-k_1| |k|.
\end{align*}
Notice that, depending on $\alpha$, we may have $|c_2 | \ll 1$, but by choosing $\epsilon$ small enough, we can ensure that 
\[  \left[ \left( 3 \alpha c_2^2 - \epsilon(1-\alpha) - 3\alpha\epsilon[1 + \epsilon - 2 c_2] \right) |k| - 3 \alpha \epsilon^2 \right] \gtrsim |k|\] 
to get the last inequality above. 
Then as in Case 2.3b, the supremum is finite if $s_1 - s < s + 1 - 2b - \nu_{c_1}$. 

\noindent\textbf{Case 3.3c.} $|k_1 -n |\leq \epsilon |k|$ 

Note $|k_1| \geq |n| - |k_1 - n| \geq (|c_2| - \epsilon)|k| - \epsilon$, so for $\epsilon$ small enough, $|k_1| \geq \epsilon |k|$. 
Write 
\[n  - k_1 = \mu k \text{  for some  } |\mu| \in \{0\} \cup [1/|k|, \epsilon], \qquad  n = c_2k + \delta \text{  for some  } |\delta| < \epsilon. \]
With this notation,
\begin{align*}
|k^3 - \alpha(n-k_1)^3 - \alpha (k - n)^3| 
%&= \left| (1 - \alpha \mu^3)k^3 -\alpha[(1-c_2)k - \delta]^3 \right| \\
%& = \left|(1-\alpha \mu^3  -\alpha + 3 \alpha c_2 - 3 \alpha c_2^2+ \alpha c_2^3)k^3 + 3\alpha \delta (1-c_2)^2 k^2 - 3\alpha \delta^2 (1-c_2) k  + \alpha\delta^3  \right| \\ 
 = \left|\alpha(c_2^3 -  \mu^3 )k^3 + 3\alpha \delta (1-c_2)^2 k^2 - 3\alpha \delta^2 (1-c_2) k  + \alpha\delta^3  \right| \\ 
\geq \alpha(|c_2|^3 - \epsilon^3)|k|^3 - 3\alpha \epsilon(1-c_2)^2 k^2 - 3 \alpha \epsilon^2 (1-c_2) |k| - \alpha \epsilon^3 
\gtrsim |k|^3,
\end{align*}
for $\epsilon$ small. 
Then the supremum is finite for $s_1 - s < s +2 - 3b - \nu_{c_1}$ by the same reasoning as before.  

\noindent\textbf{Case 4.} $kn = 0$ 

The bound is immediate in this case. 

%%%%%%%%%%%%%%%%%%%%
%% R2 PROOF
%%%%%%%%%%%%%%%%%%%%
\subsection{Proof of Proposition \ref{R2}}

As in the previous proof, it suffices to show that the supremum of the following quantity is finite: 
\begin{align*} 
 \lb k \rb ^{2 + 2 s_1} \sum_{k_1, k_2}^* \frac{\lb k_1 \rb^{-2s} \lb k_2 \rb^{-2s} \lb k - k_1 - k_2 \rb ^{-2s} |k_1 + k_2 | ^2}{(\alpha k^3 - (k_1+k_2)^2 - \alpha(k-k_1 - k_2)^3)^2 \lb \alpha(k^3 - k_1^3 - k_2^3 - (k - k_1 - k_2)^3)\rb ^{2 -2b}}.
\end{align*}
We will work with the equivalent supremum
\begin{align*}
\; \sup_k \;  \lb k \rb ^{2 + 2 s_1} \sum_{\substack{k_1 \\ n \neq 0} }^* \frac{\lb k_1 \rb^{-2s} \lb n- k_1 \rb^{-2s} \lb n-k \rb ^{-2s}}{(n-d_1k)^2(n-d_2k)^2 \lb (k - k_1)(k + k_1 - n)n \rb  ^{2 -2b}},
\end{align*}
which results from changing variables $k_2 \mapsto n-k_1$ and canceling a factor of $n^2$ from the quotient. 

\noindent\textbf{Case 1.} $k_1 = k$ 

In this case, the supremum becomes 
\begin{align*}
\sup _k \lb k \rb ^{2+ 2s_1 - 2s} \sum_{n \neq 0}^* \frac{\lb k-n \rb ^{-4s}}{(n-d_1k)^2 (n-d_2k)^2}. 
\end{align*}
Repeat the arguments from Case 1 of the $R_1$ estimate to show that the supremum is finite if $s_1 -s \leq 1$ and $s-s_1 < 2s - \nu_d - 1$. 

\noindent\textbf{Case 2.} $n - k_1 = k$ 

The supremum becomes 
\begin{align*}
\; \sup_k \;  \lb k \rb ^{2 + 2 s_1 - 2s} \sum_{n \neq 0}^* \frac{\lb n-k \rb^{-4s}}{(n-d_1k)^2(n-d_2k)^2 },
\end{align*}
which is the same as that in Case 1. 

\noindent\textbf{Case 3.} $(k-k_1)(k+ k_1 - n)n \neq 0$ 

In this case, the supremum is bounded by 
\begin{align*}
\; \sup_k \;  \lb k \rb ^{2 + 2 s_1} \sum_{\substack{k_1 \\ n \neq 0} }^* \frac{\lb k_1 \rb^{-2s} \lb n- k_1 \rb^{-2s} \lb n-k \rb ^{-2s}}{(n-d_1k)^2(n-d_2k)^2 \lb k - k_1\rb ^{2 - 2b} \lb k + k_1 - n \rb^{2 -2b} \lb n \rb  ^{2 -2b}}.
\end{align*}

\noindent\textbf{Case 3.1.} $kn > 0$ 

Here $|n - d_2k| > k$, so the supremum is bounded by  
\begin{align*}
\; \sup_k \;  \lb k \rb ^{2 s_1} \sum_{\substack{k_1 \\ n \neq 0} }^* \frac{\lb k_1 \rb^{-2s} \lb n- k_1 \rb^{-2s} \lb n-k \rb ^{-2s}}{(n-d_1k)^2\lb k - k_1\rb ^{2 - 2b} \lb k + k_1 - n \rb^{2 -2b} \lb n \rb  ^{2 -2b}}.
\end{align*}

\noindent\textbf{Case 3.1a.} $|n-d_1k| \geq \epsilon |k|$ 

In this case we have the bound
\begin{align*}
&\; \sup_k \;  \lb k \rb ^{2 s_1 - 2} \sum_{\substack{k_1 \\ n \neq 0} } \lb k_1 \rb^{-2s} \lb n- k_1 \rb^{-2s} \lb n-k \rb ^{-2s} 
\lesssim \; \sup_k \;  \lb k \rb ^{2s_1 -s - 2}.
\end{align*}
This is finite if $s_1 -s \leq 1$. 

\noindent\textbf{Case 3.1b.} $\epsilon \leq |n - d_1 k | < \epsilon |k|$ 

Note that $|n| \in [(d_1 - \epsilon) |k|, (d_1 + \epsilon)|k|]$. Thus for $\epsilon$ small, $|n - k| \gtrsim |k|$. The supremum is finite when $s_1 -s \leq 3 -3b$: 
\begin{align*}
\; \sup_k \;  \lb k \rb ^{2 s_1 - 2s - 2 + 2b } \sum_{\substack{k_1 \\ n \neq 0} } \frac{\lb k_1 \rb^{-2s} \lb n- k_1 \rb^{-2s}}{\lb k - k_1\rb ^{2 - 2b} \lb k + k_1 - n \rb^{2 -2b}} 
&\lesssim \; \sup_k \;  \lb k \rb ^{2 s_1 - 2s - 4 + 4b } \sum_{k_1  } \frac{\lb k_1 \rb^{-2s}}{\lb k - k_1\rb ^{2 - 2b}} \\
&\lesssim \; \sup_k \;  \lb k \rb ^{2 s_1 - 2s - 6 + 6b } 
<  \infty
\end{align*}

\noindent\textbf{Case 3.1c.} $|n - d_1k| < \epsilon$ 

The supremum can be estimated by 
\begin{align*}
&\; \sup_k \;  \lb k \rb ^{2 s_1 + 2 + 2\nu_{d_1} + 2 \epsilon_0  - 2s - (2 - 2b)} \sum_{\substack{k_1 \\ (n \simeq d_1k)} } \frac{\lb k_1 \rb^{-2s} \lb n- k_1 \rb^{-2s}}{\lb k - k_1\rb ^{2 - 2b} \lb k + k_1 - n \rb^{2 -2b}}.
\end{align*}
If all four factors in the summation are of order at least $|k|$, this is easy to estimate. Furthermore, if any one factor in the summation is of order $\ll |k|$, then the other three factors are all $\gtrsim |k|$. This implies that the sum over $k_1$ can always be controlled by a sum of the form 
\[ \lb k \rb ^{-2(2-2b)-2s}\sum_{m} \lb m \rb ^{-2s} \lesssim  \lb k \rb ^{-2(2-2b)-2s}, \]
which means that the supremum is finite whenever
\[  2 s_1 + 2 + 2\nu_{d_1} + 2 \epsilon_0  - 4s - 3(2 - 2b) < 0, \] 
which holds when $ s_1 - s < s + \frac12 - \nu_{d_1}$. 

\noindent\textbf{Case 3.2.} $kn < 0$ 

Cancel $\lb k \rb ^2$ with $(n - d_1k)^2$ and repeat the arguments from Case 3.1. 

%%%%%%%%%%%%%%%%%%%%
%% R3 PROOF
%%%%%%%%%%%%%%%%%%%%
\subsection{Proof of Proposition \ref{R3}}

Decompose $R_3$ into two sums based on whether or not $k_1 + k_2 $ is zero:
\begin{align*}
R_3(u,v)_k &= ik \sum_{\substack{k_1 + k_2 + k_3 = k \\ k_1 \neq 0}}^* \frac{u_{k_1}u_{k_2}v_{k_3}(k_2+k_3)}{\alpha k^3 - k_1^3 - \alpha(k_2+k_3)^3} \\
&= ik  \sum_{\substack{k_1 + k_2 + k_3 = k \\ k_1 + k_2 \neq 0 \\ k_1 \neq 0}}^* \frac{u_{k_1}u_{k_2}v_{k_3}(k_2+k_3)}{\alpha k^3 - k_1^3 - \alpha(k_2+k_3)^3}  +ik v_k\sum_{k_1 \neq 0 }^* \frac{u_{k_1}u_{-k_1}(k-k_1)}{\alpha k^3 - k_1^3 - \alpha(k-k_1)^3} \\
&= ik \sum_{\substack{k_1 + k_2 + k_3 = k \\ k_1 + k_2 \neq 0 \\ k_1 \neq 0}}^* \frac{u_{k_1}u_{k_2}v_{k_3}(k_2+k_3)}{\alpha k^3 - k_1^3 - \alpha(k_2+k_3)^3}  \\
&+ ikv_k \sum_{k_1 > 0 }^* |u_{k_1}|^2 \left[\frac{k-k_1}{\alpha k^3 - k_1^3 - \alpha(k-k_1)^3} + \frac{k+k_1}{\alpha k^3 + k_1^3 - \alpha(k+k_1)^3}\right]\\
& = \mathrm{I} + \mathrm{II}.
\end{align*}

To bound $\mathrm{II}$ in $X^{s_1, b-1}_\alpha$, note that the bracketed sum is equal to 
\begin{align*}
\frac{2(1-\alpha)k_1^4}{k_1^2(k_1 - d_1k)(k_1-d_2k)(k_1 + d_1k)(k_1 + d_2k)} 
%&\approx \frac{k_1^2}{(k_1 - d_1k)(k_1-d_2k)(k_1 + d_1k)(k_1 + d_2k)}
\end{align*}
and by an application of Cauchy-Schwartz and Young's inequalities, it suffices to show that 
\begin{align*}
\; \sup_k \;  \lb k \rb ^{2 + 2s_1 - 2s} \sum_{k_1 > 0 } ^* \frac{\lb k_1 \rb ^{4-4s}}{(k_1 - d_1k)^2(k_1-d_2k)^2(k_1 + d_1k)^2(k_1 + d_2k)^2} < \infty.
\end{align*}

\noindent\textbf{Case 1.} $k > 0$ 

In this case, $|k_1 - d_2k|, |k_1 + d_1 k| > k_1, k$ and the supremum can be estimated by 
\[ \; \sup_k \;  \lb k \rb ^{2s_1 - 2s} \sum_{k_1 > 0 } ^* \frac{\lb k_1 \rb^{2-4s}}{(k_1 - d_1k)^2 (k_1 + d_2k)^2}. \]

\noindent\textbf{Case 1.1.} $|k_1 - d_1k|, |k_1 + d_2k| \geq \epsilon k$ 

Here we have the estimates $|k_1| \leq \Big[(|d_1| + \epsilon)/\epsilon \Big] |k_1 - d_1k|$ and $|k_1| \leq \Big[(|d_2|+ \epsilon)/\epsilon \Big]|k_1 + d_2k|$. The supremum can thus be bounded by 
\[ \; \sup_k \;  \lb k \rb ^{2s_1 - 2s - 2} \sum_{k_1 > 0 } \lb k_1 \rb^{-4s} < \infty\]
for $s_1 -s \leq 1$. 

\noindent\textbf{Case 1.2.} $|k_1 - d_1 k| \geq \epsilon k$, $\epsilon \leq |k_1 + d_2k| < \epsilon k$ (or vice versa)

In this case, note that $k_1 \geq (d_1-\epsilon)k$ and bound the supremum by 
\begin{align*} 
&\; \sup_k \;  \lb k \rb ^{2s_1 - 2s - 1} \sum_{k_1 \gtrsim k } \lb k_1 \rb^{-4s +1}
\lesssim \; \sup_k \;  \lb k \rb ^{2s_1 - 2s - 1 - 4s + 2} 
< \infty
\end{align*}
when $s_1 -s \leq 2s - \frac12$. 

\noindent\textbf{Case 1.3.} $|k_1 - d_1k| \geq \epsilon k$, $|k_1 + d_2k| < \epsilon$ (or vice versa)

There is only one term in the sum in this case since $k_1 \simeq -d_2k$. The supremum can be bounded by 
\[ \; \sup_k \;  \lb k \rb ^{2s_1 -6s + 2 + 2 \nu_{d_2} + 2 \epsilon_0}, \]
which is finite when $s_1 -s < 2s - 1 - \nu_{d_2}$. 

This exhausts the cases with $k > 0$ since by choosing $\epsilon$ small, we may ensure that $|k_1 -d_1k| \leq \epsilon k $ and $|k_1 + d_2k| \leq \epsilon k$ cannot occur simultaneously.

\noindent\textbf{Case 2.} $k < 0$ 

Note that $|k_1 -d_1 k | , |k_1 + d_2k| > k_1, |k|$ and proceed as in Case 1. This completes the proof of the estimate for $\mathrm{II}$. \\  

To complete the proof, we must bound $\mathrm{I}$ in $X^{s_1, b-1}_\alpha$. As before, it suffices to show that the following supremum is finite:
\[ \; \sup_k \;  \lb k \rb ^{2 + 2s_1} \sum_{\substack{k_1 \neq 0, k_2 \\ k_1 + k_2 \neq 0}} \frac{ \lb  k_1 \rb ^{-2s} \lb k_2 \rb ^{-2s} \lb k-k_1-k_2 \rb^{-2s} |k-k_1|^2}{(\alpha k^3 - k_1^3 - \alpha (k-k_1)^3)^2 \lb \alpha k^3 - k_1^3 - k_2^3 - \alpha(k -k_1- k_2)^3 \rb ^{2 -2b}}, \]
or equivalently
\[ \; \sup_k \;  \lb k \rb ^{2 + 2s_1} \sum_{\substack{k_1 \neq 0 \\n \neq 0}} \frac{ \lb  k_1 \rb ^{-2s - 2} \lb n-k_1 \rb ^{-2s} \lb n-k \rb^{-2s} |k-k_1|^2}{(k_1 - d_1k)^2(k_1 - d_2k)^2 \lb \alpha k^3 - k_1^3 - (n-k_1)^3 - \alpha(k-n)^3 \rb ^{2 -2b}}. \]

\noindent\textbf{Case 1.} $|k_1 - d_1k| < \epsilon$ or $|k_1 - d_2k| < \epsilon $

Assume that $|k_1 - d_1k| < \epsilon$. The other case is parallel. Note that  
\[ |k_1 -d_2k| \geq (d_1 - d_2)|k| - |k_1 - d_1k| \]
so that $|k_1 - d_2k| \gtrsim |k|$. 
Also we have $|k_1 - k| \leq |k_1 -d_1k| + |d_1 - 1||k| \lesssim |k|$. 
The supremum may thus be bounded by 
\begin{align*}
\; \sup_k \;  \lb k \rb ^{2s_1 + 2 + 2 \nu_{d_1} + 2 \epsilon_0 - 2s} \sum_{\substack{n \neq 0 \\ (k_1 \simeq d_1 k)}} \frac{\lb n-k_1\rb^{-2s} \lb n- k \rb ^{-2s}}{\lb \alpha k^3 - k_1^3 - (n-k_1)^3 - \alpha (k-n)^3 \rb ^{2-2b}}.
\end{align*}

\noindent\textbf{Case 1.1.} $|n-k_1| \geq \epsilon |k|$, $|n-k| \geq \epsilon |k|$ 

In this case, the supremum is bounded by 
\begin{align*}
\; \sup_k \;  \lb k \rb ^{2s_1 + 2 + 2 \nu_{d_1} + 2 \epsilon_0 - 6s} \sum_{\substack{n \neq 0 \\ (k_1 \simeq d_1 k)}} \lb \alpha k^3 - k_1^3 - (n-k_1)^3 - \alpha (k-n)^3 \rb ^{-(2-2b)}.
\end{align*}
This is finite when $2s_1 +2 + 2 \nu_{d_1} - 6s < 0$, i.e. when $s_1 -s < 2s - 1 - \nu_{d_1}$. 

\noindent\textbf{Case 1.2.} $|n - k| < \epsilon |k|$ 

Note that in this region $|n- k_1| \gtrsim |k|$ since
\[ |n-k_1| \geq (1-d_1)|k| - |k-n| - |k_1 - d_1k| > (1-d_1 - \epsilon)|k| - \epsilon.\]

Write 
\[ n - k = \mu k \text{  for some  } |\mu| < \epsilon, \qquad k_1 = d_1k + \delta \text{  for some  } |\delta | < \epsilon. \]
Then 
\begin{align*}
| \alpha k^3 - k_1^3 - (n-k_1)^3 - \alpha (k-n)^3 | = \left| \alpha k^3 - (d_1k + \delta)^3 - [(1-d_1 + \mu)k - \delta] ^3 + \alpha \mu^3 k^3 \right| \\
=\left|  k^3[\alpha  - 1  + 3d_1 - 3d_1^2 + \mathcal{O}(\epsilon)] + \mathcal{O}(\epsilon)k^2 +\mathcal{O}(\epsilon^2)k + \mathcal{O}(\epsilon^3) \right|  \gtrsim |k^3|.
\end{align*}
The supremum is bounded by
\begin{align*}
&\; \sup_k \;  \lb k \rb ^{2s_1 + 2 + 2 \nu_{d_1} + 2 \epsilon_0 - 4s} \sum_{\substack{n \neq 0 \\ (k_1 \simeq d_1 k)}} \frac{ \lb n- k \rb ^{-2s}}{\lb \alpha k^3 - k_1^3 - (n-k_1)^3 - \alpha (k-n)^3 \rb ^{2-2b}} \\
\lesssim & \; \sup_k \;  \lb k \rb ^{2s_1 + 2 + 2 \nu_{d_1} + 2 \epsilon_0 - 4s} \sum_{n \neq 0} \frac{ \lb n- k \rb ^{-2s}}{\lb k^3 \rb^{2-2b}} 
\lesssim  \; \sup_k \;  \lb k \rb ^{2s_1 + 2 + 2 \nu_{d_1} + 2 \epsilon_0 - 4s - 6 + 6b } <  \infty, 
\end{align*}
for $s_1 - s < s + 2 - 3b  - \nu_{d_1}$. 

\noindent\textbf{Case 1.3.} $|n - k_1| < \epsilon|k|$ 

In this case, $|n-k| \gtrsim |k|$. Write 
\begin{align*}
n-k = \mu k \text{  for some  } |\mu| \in [1/|k|, \epsilon] \qquad k_1 - d_1k = \delta \text{  for some  } |\delta| < \epsilon.
\end{align*}
Then 
\begin{align*}
|\alpha k^3 - k_1^3 -(n-k_1)^3 - \alpha(k-n)^3 |= |\alpha k^3 - (d_1k + \delta)^3 -[(1 - d_1 + \mu)k - \delta]^3 + \alpha  \mu^3 k^3 | \\
= \left|[ \alpha - 1 + 3d_1 - 3d_1^2 + \mathcal{O}(\epsilon)]k^3 + \mathcal{O}(\epsilon)k^2 + \mathcal{O}(\epsilon^2)k + \mathcal{O}(\epsilon^3) \right| \gtrsim |k^3|.
\end{align*}
Thus the supremum is finite when $s_1 -s < s + 2 - 3b - \nu_{d_1}$:
\begin{align*}
&\; \sup_k \;  \lb k \rb ^{2s_1 + 2 + 2 \nu_{d_1} + 2 \epsilon_0 - 4s} \sum_{\substack{n \neq 0 \\ (k_1 \simeq d_1 k)}} \frac{\lb n-k_1\rb^{-2s}}{\lb k^3 \rb ^{2-2b}} 
\lesssim &\; \sup_k \;  \lb k \rb ^{2s_1 + 2 + 2 \nu_{d_1} + 2 \epsilon_0 - 4s - 6 + 6b} 
< & \infty.
\end{align*}

\noindent\textbf{Case 2.} $\epsilon \leq |k_1 - d_1k| < \epsilon|k|$ or $\epsilon \leq |k_1 - d_2k| <\epsilon |k|$ 

Assume that $\epsilon \leq |k_1 - d_1k| < \epsilon|k|$; the other case is similar. 
Note that we have $|k_1 - k| \lesssim |k|$ and $|k_1 - d_2k| \gtrsim |k|$ so that the supremum is bounded by    
\begin{align*}
\; \sup_k \;  \lb k \rb ^{2s_1} \sum_{\substack{|k_1| \gtrsim |k| \\n \neq 0}} \frac{ \lb  k_1 \rb ^{-2s} \lb n-k_1 \rb ^{-2s} \lb n-k \rb^{-2s} }{ \lb \alpha k^3 - k_1^3 - (n-k_1)^3 - \alpha(k-n)^3 \rb ^{2 -2b}}.
\end{align*}

\noindent\textbf{Case 2.1.} $|n - k_1| \geq \epsilon|k|$, $|n-k| \geq \epsilon|k|$ 

Here the supremum is bounded for $s_1 - s \leq 2s - \frac12$: 
\begin{align*}
&\; \sup_k \;  \lb k \rb ^{2s_1 - 4s} \sum_{\substack{|k_1| \gtrsim |k| \\n \neq 0}} \frac{ \lb  k_1 \rb ^{-2s} }{ \lb \alpha k^3 - k_1^3 - (n-k_1)^3 - \alpha(k-n)^3 \rb ^{2 -2b}} \\
&\lesssim \; \sup_k \;  \lb k \rb ^{2s_1 - 4s} \sum_{|k_1| \gtrsim |k|}  \lb  k_1 \rb ^{-2s}  
\lesssim \; \sup_k \;  \lb k \rb^{2s_1 - 4s - 2s +1 } 
< \infty.
\end{align*}

\noindent\textbf{Case 2.2.} $|n-k| < \epsilon |k|$ 

In this case, notice that $|n-k_1| \geq |k_1 - k| - |n-k| \geq (1-d_1 - 2\epsilon )|k| \gtrsim |k|$ and write
\[ n - k = \mu k \text{  for some  } |\mu| < \epsilon \qquad  k_1 - d_1 k = \mu' k \text{  for some  } |\mu'| < \epsilon. \]

Then 
\begin{align*}
|\alpha k^3 - k_1^3 - (n-k_1)^3 - \alpha(k-n)^3| &= |k^3[\alpha - (d_1 + \mu')^3  - (1 - d_1 + \mu - \mu')^3  + \alpha \mu^3]  |\\
&= |k^3[(1-\alpha) + 3(d_1^2 - d_1) + \mathcal{O}(\epsilon)] | \gtrsim |k|^3.
\end{align*}

Thus the supremum is bounded by 
\begin{align*}
&\; \sup_k \;  \lb k \rb ^{2s_1 - 2s } \sum_{\substack{|k_1| \gtrsim |k| \\|n| \lesssim |k| }} \frac{ \lb  k_1 \rb ^{-2s} }{ \lb k^3 \rb ^{2 -2b}} 
\lesssim  \; \sup_k \;  \lb k \rb ^{2s_1 -2s + 1 - 3(2-2b) - 2s + 1} 
< \infty 
\end{align*}
if $s_1 -s \leq s + 2 - 3b$. \\

\noindent\textbf{Case 2.3.} $|n - k_1| < \epsilon |k|$ 

Here $|n - k| \gtrsim |k|$, so the supremum can be estimated as follows for $s_1 - s \leq s - \frac12$:
\begin{align*}
&\; \sup_k \;  \lb k \rb ^{2s_1 - 2s} \sum_{\substack{|k_1| \gtrsim |k| \\n \neq 0}} \frac{ \lb  k_1 \rb ^{-2s}}{ \lb \alpha k^3 - k_1^3 - (n-k_1)^3 - \alpha(k-n)^3 \rb ^{2 -2b}} 
\lesssim  \; \sup_k \;  \lb k \rb ^{2s_1 - 2s - 2s + 1} 
<  \infty.
\end{align*}

\noindent\textbf{Case 3.} $|k_1 -d_1k| \geq \epsilon|k|$, $|k_1 - d_2k | \geq \epsilon |k|$ 

In this case, we need to bound 
\begin{align*}
\; \sup_k \;  \lb k \rb ^{2s_1-2} \sum_{\substack{k_1 \neq 0 \\n \neq 0}} \frac{ \lb  k_1 \rb ^{-2s - 2} \lb n-k_1 \rb ^{-2s} \lb n-k \rb^{-2s} |k-k_1|^2}{\lb \alpha k^3 - k_1^3 - (n-k_1)^3 - \alpha(k-n)^3 \rb ^{2 -2b}}.
\end{align*}

\noindent\textbf{Case 3.1.} $|k_1| \geq \epsilon |k|$

Here we have $|k-k_1| \lesssim |k_1|$ so that for $s_1 - s \leq 1$, the supremum can be estimated by 
\begin{align*}
&\; \sup_k \;  \lb k \rb ^{2s_1-2} \sum_{\substack{k_1 \neq 0 \\n \neq 0}}\lb  k_1 \rb ^{-2s} \lb n-k_1 \rb ^{-2s} \lb n-k \rb^{-2s} 
\lesssim  \; \sup_k \;  \lb k \rb ^{2s_1 -2s - 2} 
<  \infty.
\end{align*}

\noindent\textbf{Case 3.2.} $|k_1| \leq \epsilon |k|$ 

Here the supremum may be bounded by 
\begin{align*}
\; \sup_k \;  \lb k \rb ^{2s_1} \sum_{\substack{k_1 \neq 0\\n \neq 0}} \frac{ \lb  k_1 \rb ^{-2s - 2} \lb n-k_1 \rb ^{-2s} \lb n-k \rb^{-2s}}{\lb \alpha k^3 - k_1^3 - (n-k_1)^3 - \alpha(k-n)^3 \rb ^{2 -2b}}.
\end{align*}

\noindent\textbf{Case 3.2a.} $|n-k_1| \leq \epsilon |k|$ 

Note that we have $|n - k| \geq |k_1 - k| - |k_1 - n| \geq (1-2\epsilon)|k|$ 
and write 
\begin{align*}
k_1 &= \mu_1 k \text{  for some  } |\mu_1| \in [1/|k|, \epsilon ] \\
n - k_1 &= \mu_2 k \text{  for some  } |\mu_2| \in [1/|k|, \epsilon] \\
\mu &= \mu_1 + \mu_2 \text{  where }  |\mu| \in [1/|k|, 2 \epsilon] . 
\end{align*}
The lower bounds on $\mu_1$ and $\mu_2$ are positive because of the mean zero assumption on $u$. The lower bound on $\mu$  comes from the fact that $n \neq 0$. With this notation,
\begin{align*}
\left| \alpha k^3 - k_1^3 - (n-k_1)^3 - \alpha (k- n )^3 \right| 
%&= |k^3| \left|\alpha - \mu_1^3 - \mu_2^3 - \alpha (1-\mu)^3 \right| \\
&= |\mu k^3| \left| (\alpha - 1) \mu^2 + 3 \alpha( 1 - \mu)  + 3 \mu_1 \mu_2 \right|  \gtrsim |k^2|. 
\end{align*}
Then the supremum is bounded by 
\begin{align*}
 \; \sup_k \;  \lb k \rb ^{2s_1 - 2s} \sum_{\substack{k_1 \neq 0\\|n| \lesssim |k|}} \frac{ \lb  k_1 \rb ^{-2s - 2}}{\lb k^2 \rb ^{2 -2b}} 
\lesssim  \; \sup_k \;  \lb k \rb ^{2s_1 -2s - 4 + 4b} \sum_{\substack{k_1 \neq 0\\|n| \lesssim |k|}} \lb k_1 \rb ^{-2s-2} \\
\lesssim  \; \sup_k \;  \lb k \rb ^{2s_1 -2s - 4 + 4b + 1} \sum_{k_1 \neq 0} \lb k_1 \rb ^{-2s-2} 
\lesssim  \; \sup_k \;  \lb k \rb ^{2s_1 -2s - 4 + 4b + 1},
\end{align*}
which is finite for $s_1 -s \leq \frac32 - 2b$. 

\noindent\textbf{Case 3.2b.} $|n - k| \leq \epsilon |k|$ 

In this case, note that $|n-k_1| \geq |k-k_1| - |k-n| \geq (1-2\epsilon)|k| \gtrsim |k|$ 
and write
\begin{align*}
k_1 = \mu_1 k \text{  for some  } |\mu_1| \in [1/|k|, \epsilon ], \qquad
n - k = \mu_2 k \text{  for some  } |\mu_2| \in [0, \epsilon] . 
\end{align*}
Then
\begin{align*}
\alpha \left|k^3 - k_1^3 - (n-k_1)^3 - \alpha(k-n)^3\right| &=| k^3|\left|\alpha  - \mu_1^3 - (1 - \mu_1 + \mu_2)^3 +\alpha \mu_2^3 \right| \\
&= |k^3| \left| 1- \alpha + \mathcal{O}(\epsilon)\right| \gtrsim |k^3|.
\end{align*}
Thus for $s_1 -s \leq \frac52 - 3b$, the supremum is bounded:
\begin{align*}
\; \sup_k \;  \lb k \rb ^{2s_1 -2s} \sum_{\substack{k_1 \neq 0 \\ |n| \lesssim |k|}} \frac{\lb k \rb ^{-2s - 2}}{\lb k^3 \rb ^{2 - 2b}} 
\lesssim \; \sup_k \;  \lb k \rb ^{2s_1 -2s - 6 + 6b + 1 } \sum_{k_1 \neq 0 } \lb k_1 \rb ^{-2s - 2} \\
\lesssim \; \sup_k \;  \lb k \rb ^{2s_1 -2s - 6 + 6b + 1 } < \infty.
\end{align*} 

%%%%%%%%%%%%%%%%%%%
%% ACKNOWLEDGMENTS
%%%%%%%%%%%%%%%%%%%

\section*{Acknowledgments}

The author would like to thank Nikolaos Tzirakis for many helpful discussions and comments.

%%%%%%%%%%%%%%%%%%%%
%% REFERENCES
%%%%%%%%%%%%%%%%%%%%

\end{document}